\documentclass[12pt]{article}
\usepackage[T1]{fontenc}
\usepackage{a4wide}

\usepackage{graphicx}
\usepackage{hyperref}
\usepackage{amsthm}
\usepackage{amsmath}
\usepackage{amssymb}
\usepackage{amsfonts}
\usepackage[noadjust]{cite}
\usepackage{epsfig}
\usepackage{mathtools}
\usepackage{enumitem}
\usepackage{stackengine,graphicx}
\usepackage{xspace}
\usepackage{float}
\usepackage{nicematrix}

 \newcommand{\vc}[1]{\langle #1 \rangle}

\newcommand{\F}{\mathbb{F}}
\renewcommand{\S}{\mathcal{S}}

\renewcommand{\leq}{\leqslant}

\newcommand{\Span}{\mathrm{span}}
\newcommand{\rows}{\mathrm{rows}}
\newcommand{\columns}{\mathrm{columns}}

\usepackage{todonotes}
\setuptodonotes{inline}

\newcommand\blfootnote[1]{%
  \begingroup
  \renewcommand\thefootnote{}\footnotetext{#1}%
  %\addtocounter{footnote}{-1}%
  \endgroup
}

\newcommand{\ERCagreement}{
\blfootnote{\noindent
{\begin{minipage}[t]{0.70\textwidth}
\vspace{-15pt}
\small MP was supported by the project {\sc{BOBR}} that have received funding from the European Research Council (ERC) under the European Union's Horizon 2020 research and innovation programme (grant agreement No948057). 
KP was supported by the project GA24-11098S of the Czech Science Foundation.
JG was supported by the Polish National Science Centre SONATA-18 grant number 2022/47/D/ST6/03421.
\vspace{10pt}
 \end{minipage} \hspace{5pt}
 \begin{minipage}{.2\textwidth} \vspace{5pt} \hspace{25pt}
 \includegraphics[width=1.0\textwidth]{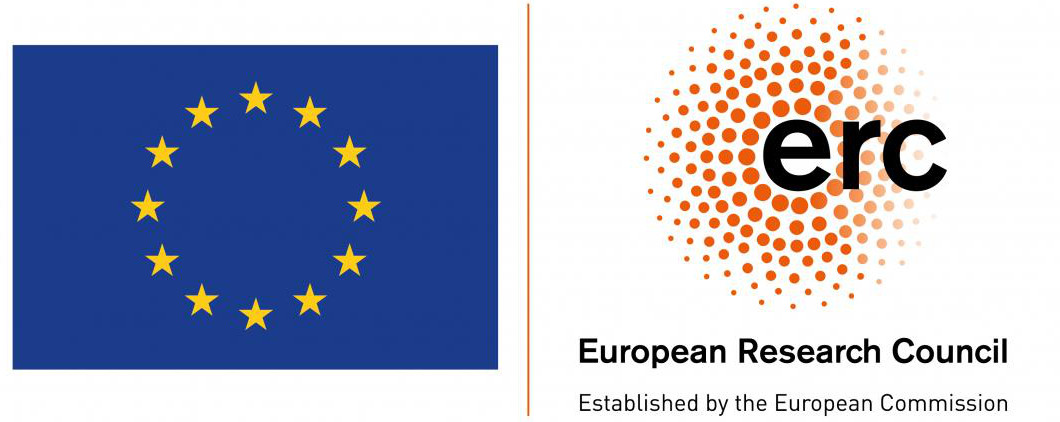}\end{minipage}\hfill}}}

%math
\newcommand{\FF}{\mathbb{F}}

\newcommand{\QQ}{\mathbb{Q}}

%depth parameters

\DeclareMathOperator{\td}{td}

\DeclareMathOperator{\cd}{cd}
\DeclareMathOperator{\dd}{dd}

\DeclareMathOperator{\csd}{c^{*}\hspace{-3pt}d}
\DeclareMathOperator{\dsd}{d^{*}\hspace{-3pt}d}

\DeclareMathOperator{\cbd}{c^{^{\bullet}}\hspace{-3.5pt}d}
\DeclareMathOperator{\dbd}{d^{^\bullet}\hspace{-3.5pt}d}

%theorems
\newtheorem{theorem}{Theorem}
\newtheorem{corollary}[theorem]{Corollary}
\newtheorem{lemma}[theorem]{Lemma}
\newtheorem{proposition}[theorem]{Proposition}

\newtheorem{definition}[theorem]{Definition}

%IP complexity shortcuts

\newcommand{\FPT}{$\mathsf{FPT}$\xspace}

\newcommand{\Q}{\mathbb{Q}}

%ips.tex paper 1
\def\ve#1{\mathchoice{\mbox{\boldmath$\displaystyle\bf#1$}}
{\mbox{\boldmath$\textstyle\bf#1$}}
{\mbox{\boldmath$\scriptstyle\bf#1$}}
{\mbox{\boldmath$\scriptscriptstyle\bf#1$}}}

\newcommand\veu{{\ve u}}
\newcommand\vev{{\ve v}}
\newcommand\vew{{\ve w}}

\def\N{\mathbb{N}}

\def \la {\langle}
\def \ra {\rangle}

\renewcommand{\phi}{\varphi}

\begin{document}

\title{Obstructions and dualities for matroid depth parameters}

%\thanks{The first author was supported by TODO. The second author was supported by the project GA24-11098S of the Czech Science Foundation. The third author was supported by TODO.}

\author{
Jakub Gajarský\thanks{Institute of Informatics, University of Warsaw, Poland. Email: \tt{gajarsky@mimuw.edu.pl}.}\and
Kristýna Pekárková\thanks{Faculty of Informatics, Masaryk University, Botanick\'a 68A, 602 00 Brno, Czech Republic. Email: \tt{kristyna.pekarkova@mail.muni.cz}.}\and
Michał Pilipczuk\thanks{Institute of Informatics, University of Warsaw, Poland. Email: \tt{michal.pilipczuk@mimuw.edu.pl}.}
}
\date{} 
\maketitle

\begin{abstract} 
    Contraction$^*$-depth is considered to be one of the analogues of graph tree-depth in the matroid setting.
    In this paper, we investigate structural properties of contraction$^*$-depth of matroids representable over finite fields and rationals. In particular, we prove that the obstructions for contraction$^*$-depth for these classes of matroids are bounded in size. From this we derive analogous results for related notions of contraction-depth and deletion-depth. Moreover, we define a dual notion to contraction$^*$-depth, named \emph{deletion$^*$-depth}, for $\FF$-representable matroids, and by duality extend our results from contraction$^*$-depth to this notion.
\end{abstract}

\ERCagreement

\section{Introduction}

A recent line of research in structural matroid theory
focuses on various matroid depth parameters and their applications. These parameters typically capture whether a matroid $M$ can be 
suitably encoded in a tree of bounded depth. Probably the most prominent examples are \emph{branch-depth}, introduced by DeVos, Kwon and Oum~\cite{DeVKO20}, and \emph{contraction$^*$-depth}\footnote{In the original paper~\cite{KarKLM17}, contraction$^*$-depth was called branch-depth. However, due to the naming clash with the notion by DeVos et al., this parameter was renamed to contraction$^*$-depth.}, introduced by Kardoš, Kráľ, Liebenau and Mach~\cite{KarKLM17}.

Recently it was shown that matroids of bounded contraction$^*$-depth can be used to find efficient algorithms for block-structured integer programs~\cite{ChaCKKP19,BriKKPS24}. This has led to further study of this notion and introduction of similar parameters such as \emph{deletion-depth}, \emph{contraction-depth}~\cite{DingO95,DeVKO20} and \emph{contraction$^*$-deletion-depth}~\cite{BriKL23}. All these parameters have the same general structure: The aim is to recursively decompose a matroid $M$ by applying a simplification operation (contraction, deletion, ...) to $M$ (if $M$ is connected) and then continuing in each connected component of the resulting matroid separately. The $X$-depth (where $X$ is the simplification operation in question) of $M$ corresponds to the number of applications of the simplification operation in the deepest branch in this recursive decomposition.

In this work, we focus on obstructions for these depth parameters. First, we  prove that  obstructions for contraction$^*$-depth of $\FF$-representable matroids are bounded in size:

\begin{theorem}
\label{thm:main1}
Let $\FF$ be a finite field.
Let $M$ be an $\mathbb{F}$-represented matroid such that
$\csd (M) = d$ and $\csd (M \setminus e) < d$ for every $e \in M$. Then $|M| \leq f(|\FF|, d)$ for some function $f$.
\end{theorem}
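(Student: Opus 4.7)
I would prove Theorem~\ref{thm:main1} by induction on $d$. The base case $d \leq 1$ is immediate: a deletion-critical $\FF$-representable matroid with $\csd(M) \leq 1$ consists of at most one element. For the inductive step with $d \geq 2$, suppose the bound $f(|\FF|, d-1)$ exists, and let $M$ be a deletion-critical $\FF$-representable matroid with $\csd(M) = d$.

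My first move would be two structural reductions. If $M$ were disconnected as $M = M_1 \oplus M_2$ with both parts nonempty, then $\csd(M) = \max(\csd(M_1), \csd(M_2)) = d$ would force some $M_i$ to have $\csd(M_i) = d$; deleting any element of the other part keeps $\csd(M \setminus e) \geq d$, contradicting criticality. Thus $M$ is connected. Similarly, loops and parallel elements can be deleted without changing $\csd$ (a parallel partner plays the same role in any decomposition), so $M$ is simple. As a simple $\FF$-representable matroid, $|M| \leq (|\FF|^{r(M)} - 1)/(|\FF| - 1)$, so it suffices to bound the rank $r(M)$.

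Next, I would fix an optimal decomposition: a set $U \subseteq E(M)$ such that the components $N_1, \ldots, N_m$ of $M/U$ each satisfy $\csd(N_j) \leq d - 1$. Each $N_j$ is $\FF$-representable as a minor of $M$, and $r(M) = r(U) + \sum_{j=1}^m r(N_j)$. To bound $r(N_j)$, I would extract from each $N_j$ a deletion-critical submatroid $N_j^*$ with $\csd(N_j^*) = \csd(N_j)$ by greedily removing elements whose deletion does not decrease $\csd$; the inductive hypothesis then gives $|N_j^*| \leq f(|\FF|, d-1)$. I would then argue that the ``extra'' elements $N_j \setminus N_j^*$ are few, bounding $|N_j|$ and hence $r(N_j)$. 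The number of components $m$ would be bounded via an $\FF$-representation pigeonhole: the attachment of each $N_j$ to $U$ corresponds to a linear pattern inside $\Span(U)$, and two components with sufficiently similar attachments would allow deleting an element of one without affecting $\csd(M)$, contradicting criticality. Bounding $|U|$ follows analogously by treating $U$ as an independent set and controlling its rank through the same criticality argument.

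The main obstacle will be the step bounding $|N_j \setminus N_j^*|$. Since $N_j$ is not itself deletion-critical for its own depth, induction does not apply directly to its extra elements. The argument must instead translate the global criticality of $M$ into local constraints on each component after contracting $U$: for an ``extra'' element $e \in N_j \setminus N_j^*$ we have $\csd(N_j \setminus e) = \csd(N_j)$, yet criticality of $M$ gives $\csd(M \setminus e) < d$. Reconciling these two facts requires a careful analysis of how an optimal decomposition of $M \setminus e$ relates to the decomposition of $M$ through $U$, and the $\FF$-representability is essential here, as it bounds the structural possibilities after contraction in terms of $|\FF|$ and the rank of $U$. Carrying this out precisely, while simultaneously controlling $|U|$ and $m$, is the technical heart of the proof.
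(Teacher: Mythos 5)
Your proposal and the paper's proof diverge completely. The paper does not argue by induction on $d$ at all; instead it takes a model-theoretic route. It first shows (Theorem~\ref{thm:csd_CMSO}) that ``$\csd(M(A)) = d$'' is CMSO-expressible on the matrix structure $\S(A)$, using virtual columns and modular counting to handle linear dependence in quotient spaces. It then encodes any matrix of dual tree-depth $\leq d$ (which exists by Corollary~\ref{cor:dual_td}) into a labeled tree $T(A)$ of depth $d+3$ and transfers the sentence through an interpretation. Finally it applies the reduction theorem of~\cite{GajH15} (Theorem~\ref{thm:reduce}): if $T(A)$ is larger than a threshold $N$ depending only on $|\FF|$ and $d$, some subtrees can be deleted while preserving the sentence, and by Lemma~\ref{lem:ignore_rows} the surviving tree encodes a \emph{restriction} of $M$ that still has $\csd = d$, contradicting deletion-minimality. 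This sidesteps entirely the need to control ranks, component counts, or attachment patterns.

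Your sketch, by contrast, has gaps that you yourself flag and that are real. The central one is bounding $|N_j \setminus N_j^*|$: deletion-criticality of $M$ does not descend to the components of a contraction, and you have no mechanism to reconcile $\csd(N_j \setminus e) = \csd(N_j)$ with $\csd(M \setminus e) < d$. The pigeonhole step for bounding $m$ is also not established: two components with identical attachment to $U$ need not give a deletable element, because the optimal decomposition of $M \setminus e$ may look nothing like the one you fixed for $M$ --- criticality is about the \emph{best} decomposition of $M \setminus e$, not the restriction of a fixed one. There is also a conceptual mismatch in your setup: the $\csd$ recursion contracts by arbitrary vectors of $\FF^h$ (one per level, within each component separately), not by a subset $U \subseteq E(M)$, so an ``optimal decomposition'' is a branching tree of contracted vectors, not a single flat $M/U$. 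Even replacing $U$ with a subspace $K \subseteq \FF^h$ does not fix this, since different branches contract by different vectors. In short, the overall plan (reduce to connected and simple, bound rank, recurse on components) is plausible in spirit but leaves the genuinely hard technical work --- which the paper avoids by its logic-based compression argument --- entirely open.
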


Our proof relies on techniques from finite model theory. We show that every $\FF$-represented matroid of bounded contraction$^*$-depth can be encoded in a tree $T$ of bounded depth using CMSO logic and also that the property of having contraction$^*$-depth exactly $d$ can be expressed in CMSO. Then we use a result of~\cite{GajH15} to argue
that if $T$ is tree that encodes matroid satisfying the assumptions of Theorem~\ref{thm:main1}, then $T$ cannot be too large.

Next, we introduce a new depth parameter named \emph{deletion$^*$-depth} and show that this parameter is dual to contraction$^*$-depth in the following sense.
\begin{theorem}
\label{thm:csd_dsd_dual}
    For every represented matroid $M$ we have $\csd(M) = \dsd(M^*)$.
\end{theorem}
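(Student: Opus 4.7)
The plan is to proceed by induction on $|E(M)|$, relying on three basic facts about matroid duality: first, $(M^*)^* = M$; second, deletion and contraction are swapped under the star, so that $(M/F)^* = M^*\setminus F$ for every $F\subseteq E(M)$; and third, $M$ is connected if and only if $M^*$ is, with the connected components of the two matroids supported on the same subsets of the ground set. The base case of the empty matroid is immediate, with both sides equal to $0$.

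For the inductive step, I would first consider the case when $M$ is disconnected, with components $M_1,\ldots,M_k$. Then $M^*$ has components $M_1^*,\ldots,M_k^*$, and since both $\csd$ and $\dsd$ are defined on a disconnected matroid as the maximum of the parameter over its components, one obtains $\csd(M) = \max_i \csd(M_i) = \max_i \dsd(M_i^*) = \dsd(M^*)$ by the inductive hypothesis applied component-wise. Now suppose $M$ is connected, so that $M^*$ is connected as well. The recursive definition of $\csd(M)$ takes the form $1 + \min_F \csd(M/F)$, where $F$ ranges over a certain family of ``admissible'' contraction moves that captures the $^*$-flavour of the parameter. The notion $\dsd$ is introduced precisely as the dual, so that $\dsd(M^*) = 1 + \min_F \dsd(M^*\setminus F)$ with $F$ ranging over the corresponding family of deletion moves. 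Because $(M/F)^* = M^*\setminus F$, the inductive hypothesis gives $\csd(M/F) = \dsd(M^*\setminus F)$, and taking the minimum over the matched family of $F$'s yields $\csd(M) = \dsd(M^*)$.

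The main obstacle is bookkeeping rather than conceptual: one has to unfold the definition of $\dsd$ carefully and verify that the family of subsets $F$ admissible as ``deletion moves'' in $M^*$ is precisely the image under the involution $M\mapsto M^*$ of the family of subsets admissible as ``contraction moves'' in $M$ under the definition of $\csd$. This is tautologically what it means for $\dsd$ to be introduced as the dual of $\csd$, and in practice it reduces to the standard correspondences between structural features of $M$ and $M^*$ (circuits versus cocircuits, loops versus coloops, rank versus corank, and the like). Once this alignment of the two families of admissible moves is in place, the inductive argument sketched above becomes mechanical, and it also shows by the same token that $\dsd(M) = \csd(M^*)$, so the identity is a genuine duality.
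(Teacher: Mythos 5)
Your proposal treats $\csd$ and $\dsd$ as though the recursive step were a minor operation on a subset $F$ of the ground set, so that the standard identity $(M/F)^* = M^*\setminus F$ would finish the job. But neither parameter works that way. In $\csd$, the step is $M(A) \mapsto M(A)/\vev$ for an \emph{arbitrary} $\vev\in\F^m$ ($m$ the number of rows of $A$), i.e.\ a quotient of the ambient column space by $\Span(\vev)$; in $\dsd$, the step is $M(A)\mapsto M(A\oplus\vev^\top)$, i.e.\ appending an arbitrary row. In both cases the ground set (the columns) is unchanged. So there is no subset $F\subseteq E(M)$ to which the deletion/contraction duality for minors could be applied, and the sentence you describe as ``tautologically what it means for $\dsd$ to be introduced as the dual of $\csd$'' is in fact the entire mathematical content of the theorem, not a bookkeeping verification. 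The definitions of the two parameters are given independently --- one via quotienting, the other via appending a row --- and showing they correspond under $M\mapsto M^*$ requires a real argument.

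Concretely, what the paper proves and your sketch assumes is the following: if $W=\Span(\rows(A))\subseteq\F^n$, then $M(A)^* \cong M(W^\bot)$ (Lemma~\ref{lem:dual_subspaces}); the column-space quotient $M(A)/\vev$ can be realized as $M(W\cap\Span(\vev')^\bot)$ for a suitable $\vev'\in\F^n$ and vice versa (Lemma~\ref{lem:cs_cb_equal}, which needs a basis-change computation); and $(W\cap\Span(\vev')^\bot)^\bot=\Span(W^\bot\cup\{\vev'\})$, which is exactly what appending the row $\vev'^\top$ to a representation of $M^*$ does (Theorem~\ref{thm:cb_db_dual}). Until one establishes this three-way translation between the quotient operation on columns, the intersection operation on row spaces, and the append operation on a representation of the dual, one has not matched up the ``admissible moves'' of $\csd$ and $\dsd$. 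Separately, your induction on $|E(M)|$ is not well-founded in the connected case: $M/\vev$ and $M(A\oplus\vev^\top)$ have the same ground set as $M$, so $|E|$ does not drop; the recursion terminates on rank (decreasing for $\csd$, increasing toward $|E|$ for $\dsd$), and the measure would have to reflect that. The disconnected case and the base case of your sketch are fine, but the connected step is where the theorem actually lives, and it is missing.
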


We use this duality result to extend Theorem~\ref{thm:main_dd} to this new parameter.
\begin{theorem}
\label{thm:main_dsd}
Let $\FF$ be a finite field.
Let $M$ be an $\mathbb{F}$-represented matroid such that
$\dsd (M) = d$ and $\dsd (M/e) < d$ for every $e \in M$. Then $|M| \leq f(|\FF|, d)$ for some function $f$.
\end{theorem}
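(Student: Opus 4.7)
The plan is to deduce Theorem~\ref{thm:main_dsd} directly from Theorem~\ref{thm:main1} by passing to the dual matroid $M^{*}$. The duality identity $\csd(M) = \dsd(M^{*})$ provided by Theorem~\ref{thm:csd_dsd_dual} is exactly tailored for this purpose: it converts a hypothesis phrased in terms of $\dsd$ and contractions into one phrased in terms of $\csd$ and deletions, and the latter is precisely the setting of Theorem~\ref{thm:main1}.

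Concretely, assume $M$ is $\FF$-represented with $\dsd(M)=d$ and $\dsd(M/e)<d$ for every $e\in M$. Set $N := M^{*}$. Since $M$ is $\FF$-represented, so is $N$: the orthogonal complement of the row space of a representation of $M$ is a representation of $M^{*}$ over the same field. Applying Theorem~\ref{thm:csd_dsd_dual} to $M$ yields $\csd(N) = \dsd(M) = d$. For any $e\in N$, we combine the standard matroid identity $(M/e)^{*} = M^{*}\setminus e = N\setminus e$ with Theorem~\ref{thm:csd_dsd_dual} applied to $M/e$:
\[
\csd(N\setminus e) \;=\; \csd\bigl((M/e)^{*}\bigr) \;=\; \dsd(M/e) \;<\; d.
\]
Thus $N$ satisfies the hypotheses of Theorem~\ref{thm:main1}, so $|N|\leq f(|\FF|,d)$, and since the ground set of $M^{*}$ coincides with that of $M$, we conclude $|M| = |N| \leq f(|\FF|,d)$.

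The only bookkeeping needed is to verify that $M/e$ is again an $\FF$-represented matroid (immediate from the standard row-reduction description of contraction on the representing matrix), so that Theorem~\ref{thm:csd_dsd_dual} applies to it. Beyond this formality, the derivation is purely mechanical; no genuine obstacle arises because the substantive work has already been carried out in the proofs of Theorem~\ref{thm:main1} and Theorem~\ref{thm:csd_dsd_dual}. The main conceptual point worth emphasising is that the duality swaps the roles of the two reduction operations in a clean, symmetric manner: an excluded-minor type statement for $\dsd$ with respect to contraction is equivalent, under $M\mapsto M^{*}$, to an excluded-minor type statement for $\csd$ with respect to deletion.
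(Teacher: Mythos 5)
Your proposal is correct and follows essentially the same route as the paper: pass to $M^{*}$, apply Theorem~\ref{thm:csd_dsd_dual} together with the deletion/contraction duality $(M/e)^{*}=M^{*}\setminus e$, and invoke Theorem~\ref{thm:main1}. The paper phrases this contrapositively (assuming $|M|>f(|\FF|,d)$ and deriving a contradiction) whereas you argue directly, but the content is identical.
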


We then focus on contraction-depth and deletion-depth.
The proof Theorem~\ref{thm:main1} can be easily adjusted to obtain the following result for contraction-depth. 
\begin{theorem}
\label{thm:main_cd}
Let $\FF$ be a finite field.
Let $M$ be an $\mathbb{F}$-represented matroid such that
$\cd (M) = d$ and $\cd (M \setminus S) < d$ for every non-empty set $S$ of elements of $M$. Then $|M| \leq f(|\FF|, d)$ for some function $f$.
\end{theorem}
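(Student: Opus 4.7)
The plan is to replicate the three-step strategy used in the proof of Theorem~\ref{thm:main1}, with modifications tailored to contraction-depth. In the first step, I encode every $\FF$-represented matroid $M$ with $\cd(M) \leq d$ into a rooted labeled tree $T_M$ of depth bounded by a function of $d$ and $|\FF|$. The tree mirrors the recursive definition of $\cd$: at each node we either branch over the connected components of the current minor, or we descend along a child representing the element to be contracted next. The labels, drawn from a finite alphabet depending on $|\FF|$, record enough of the $\FF$-representation so that both $M$ and its representation can be recovered from $T_M$ by a fixed tuple of CMSO formulas. This encoding is essentially the one used for contraction$^*$-depth, but stripped of the star component; the only technical check is that bounding $\cd$ suffices to bound the depth of the decomposition branches.

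In the second step, I verify that the hypothesis of the theorem is expressible in CMSO. The property $\cd(M) \leq d$ can be written by induction on $d$: for $d = 0$ the matroid is empty, and for $d \geq 1$ we existentially quantify either a partition of $E(M)$ witnessing disconnectedness or a single element $e$ to contract, then assert that the resulting smaller object has $\cd \leq d-1$. The universal clause ``$\cd(M\setminus S) < d$ for every non-empty $S \subseteq E(M)$'' requires only a single set quantifier, which CMSO provides. The conjunction yields one CMSO sentence $\phi_d$ that captures precisely the hypothesis of Theorem~\ref{thm:main_cd}.

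In the third step, I invoke the result of~\cite{GajH15}, which bounds the size of depth-$h$ rooted trees that satisfy a fixed CMSO sentence while being subtree-minimal for that sentence. The deletion-minimality condition of our hypothesis translates on the tree side into such a subtree-minimality condition for $\phi_d$: any removable subtree of $T_M$ corresponds to a non-empty set $S \subseteq E(M)$ whose deletion still leaves $\cd(M \setminus S) = d$, which is ruled out by assumption. Hence $T_M$ has size bounded by a function of $|\FF|$ and $d$, and since $|M|$ is at most the number of leaves of $T_M$, the conclusion follows.

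The main obstacle lies in the compatibility between ``removable subtrees'' of $T_M$ and ``deletable sets'' of $M$: a single subtree may represent several elements of $M$, so the minimality assumption on the matroid side must cover deletion of arbitrary non-empty subsets rather than only single elements. This is precisely why Theorem~\ref{thm:main_cd} quantifies over non-empty sets $S$, in contrast to Theorem~\ref{thm:main1} where the csd-decomposition is structured more tightly around individual elements and a single-element minimality assumption suffices. Once this correspondence is set up, the argument closes exactly as in the proof of Theorem~\ref{thm:main1}.
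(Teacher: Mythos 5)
Your overall three-step template (encode in a bounded-depth tree, express the property in CMSO, apply the tree-reduction theorem of~\cite{GajH15}) matches the paper's, but the concrete tree encoding you propose is different from the paper's and, as described, has a real gap.

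The paper does \emph{not} encode $M$ via its $\cd$-decomposition tree. Instead it first notes that $\csd(M)\le\cd(M)=d$ (by Theorem~\ref{thm:cd_csd_fequivalent}), hence by Corollary~\ref{cor:dual_td} there is a representing matrix $A'$ whose \emph{dual tree-depth} is at most $d$. The tree $T(A')$ is then built from an elimination forest of the dual graph $G_D(A')$: rows of $A'$ become internal nodes, columns become leaves hanging off the deepest row they hit, and each column-leaf is labelled with the $\FF$-entries it has in its chain of ancestor rows. This is exactly what makes the matrix structure $\S(A')$ CMSO-interpretable in $T(A')$: a column has nonzero entries only in its ancestors, so finitely many labels per leaf suffice. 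Your proposed tree, the recursive $\cd$-decomposition, does not have this feature. The nodes of that tree are minors of $M$ (``contract $e$'' or ``split into components''), and there is no evident finite labelling that lets CMSO recover which subsets of $E(M)$ are independent. Recording ``enough of the $\FF$-representation'' in a finite alphabet is precisely the nontrivial step, and the paper's solution is the dual-tree-depth encoding; your proposal leaves it as a black box. Relatedly, the identification of removable subtrees with deletable subsets $S\subseteq E(M)$ (and the fact that removing an entire subtree of rows and columns is the same as deleting a set of columns, because the deleted rows become identically zero — Lemma~\ref{lem:ignore_rows}) depends on the specific structure of $T(A')$ and does not obviously transfer to your decomposition tree.

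A secondary inconsistency: in your second step you propose folding the minimality clause ``$\cd(M\setminus S)<d$ for all nonempty $S$'' into the CMSO sentence $\phi_d$, but in your third step you then use the tree-reduction theorem to derive a contradiction with minimality externally. The paper uses only the simpler sentence expressing $\cd(A)=d$; minimality is used outside CMSO, by noting that the reduced tree $T'$ yields a proper restriction $M\setminus S$ with $\cd(M\setminus S)=d$. Including minimality inside the formula is unnecessary and muddies the argument (if $T'\models\phi_d$ included minimality of $M\setminus S$, that would be an extra, irrelevant conclusion). You did correctly observe that the set-version of minimality is what the tree-reduction naturally gives, and that this is why Theorem~\ref{thm:main_cd} is stated with arbitrary nonempty $S$ rather than singletons.

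In short: the missing idea is the passage through dual tree-depth (via $\csd\le\cd$ and Corollary~\ref{cor:dual_td}) to get a concrete tree in which the matrix — and hence the matroid — is CMSO-interpretable. Without it, the claim that your decomposition tree can be equipped with a finite labelling from which $M$ is CMSO-recoverable is unsupported.
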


Using the fact that contraction and deletion are dual operations, we also obtain the following dual result for deletion-depth.
\begin{theorem}
\label{thm:main_dd}
Let $\FF$ be a finite field.
Let $M$ be an $\mathbb{F}$-represented matroid such that
$\dd (M) = d$ and $\dd (M/S) < d$ for every non-empty set $S$ of elements of $M$. Then $|M| \leq f(|\FF|, d)$ for some function $f$.
\end{theorem}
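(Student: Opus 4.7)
The plan is to derive Theorem~\ref{thm:main_dd} from Theorem~\ref{thm:main_cd} by matroid duality, in complete analogy with how Theorem~\ref{thm:main_dsd} is obtained from its counterpart for $\csd$. The crucial ingredient is the folklore identity
\[
\dd(M) = \cd(M^*)
\]
for every represented matroid $M$, combined with the standard duality rules $(M\setminus S)^* = M^*/S$ and $(M/S)^* = M^* \setminus S$, and the fact that $M^*$ is $\FF$-represented whenever $M$ is, with $|M^*| = |M|$.

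First I would verify the duality $\dd(M) = \cd(M^*)$. Both parameters are defined by a parallel recursion: if $M$ is disconnected, one takes the maximum of the parameter over the connected components; if $M$ is connected and non-empty, one adds $1$ and minimizes over the choice of an element to delete (resp.\ contract). Connectedness of a matroid is preserved under taking the dual, and connected components of $M$ correspond exactly to connected components of $M^*$. Since deletion in $M$ corresponds to contraction in $M^*$ via $(M \setminus e)^* = M^*/e$, a straightforward induction on $|M|$ on both recursions in parallel shows that $\dd(M) = \cd(M^*)$.

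With this in hand, set $N = M^*$, which is $\FF$-represented with $|N| = |M|$. By the duality above, $\cd(N) = \dd(M) = d$. Now fix any non-empty $S \subseteq N$; then $S$ is also a non-empty subset of the ground set of $M$, and
\[
\cd(N \setminus S) = \dd\bigl((N \setminus S)^*\bigr) = \dd\bigl((M^* \setminus S)^*\bigr) = \dd(M/S) < d,
\]
by the hypothesis of Theorem~\ref{thm:main_dd}. Hence $N$ satisfies the hypotheses of Theorem~\ref{thm:main_cd}, which yields $|N| \leq f(|\FF|, d)$, and therefore $|M| = |N| \leq f(|\FF|, d)$, as required.

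No genuine obstacle is expected: the only non-mechanical step is the verification of $\dd(M) = \cd(M^*)$, which amounts to unfolding the recursive definitions and invoking the basic duality of deletion, contraction, and connectedness. The rest is a direct translation of the hypotheses through duality and an appeal to Theorem~\ref{thm:main_cd}.
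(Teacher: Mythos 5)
Your proof is correct and follows essentially the same route as the paper: both deduce the result from Theorem~\ref{thm:main_cd} by dualizing, using $\dd(M)=\cd(M^*)$ (the paper's Lemma~\ref{lem:cd_dd_dual}) together with $(M^*\setminus S)^*=M/S$. The only cosmetic difference is that you argue directly while the paper phrases it as a proof by contradiction; you also sketch a verification of the duality lemma, which the paper takes for granted.
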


As a an example application of our Theorem~\ref{thm:main1} we obtain Theorem~\ref{thm:few_moves} below. For its statement, we note that if $M$ is a matroid represented by a matrix with $m$ rows, and if $\ve{v} \in \FF^m$, then by $M/\ve{v}$ we denote simplification operation used to define contraction$^*$-depth. 
We then call a vector $\ve{v} \in \FF^m$ \emph{progresive} if $\csd(M/\ve{v}) < \csd(M)$.

\begin{theorem}
\label{thm:few_moves}
    There exists a function $g: \N \times \N \to \N$ such that for every $\FF$ and $d$, if $M$ is a $\F$-represented matroid with $\csd(M) = d$, then the number of progressive vectors for $M$ is at most $g(|\FF|, d)$.
\end{theorem}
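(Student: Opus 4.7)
The plan is to reduce Theorem~\ref{thm:few_moves} to Theorem~\ref{thm:main1} via an augmentation construction. Let $V \subseteq \FF^m$ denote the set of progressive vectors for $M$. Since $V$ is closed under nonzero scaling (as the contraction$^*$ move $M/\ve{v}$ depends only on the direction of $\ve{v}$), it suffices to bound the number of scaling classes in $V$, each of size $|\FF|-1$. Form the augmented matroid $M^+$ by adjoining one representative of each scaling class as a fresh column to the representation matrix of $M$, so that $|M^+| \geq |M| + |V|/(|\FF|-1)$.

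The first milestone is to show that $\csd(M^+) \leq d + c$ for some absolute constant $c$ (ideally $c = 1$). The idea is to start from an optimal depth-$d$ contraction$^*$ decomposition of $M$ and interleave, at appropriate levels, the contraction$^*$ moves that eliminate the adjoined elements $e_{\ve{v}}$. Crucially, the defining property of a progressive vector --- namely that $\csd(M/\ve{v}) < d$ --- implies that contracting$^*$ along $\ve{v}$ already collapses the direction $\ve{v}$ in $M^+$, which trivializes $e_{\ve{v}}$ at essentially no extra depth cost; consolidating such adjustments across the levels of the decomposition should yield a decomposition of $M^+$ of depth only marginally larger than $d$. Granting this bound, I would iteratively delete from $M^+$ those elements whose removal does not decrease $\csd$, always preferring non-$V$ elements first, and arrive at a minimal obstruction $N \subseteq M^+$ satisfying the hypothesis of Theorem~\ref{thm:main1}. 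That theorem gives $|N| \leq f(|\FF|, d+c)$, and the theorem follows once we argue that each element $e_{\ve{v}}$ either survives in $N$ (being essential for attaining $\csd(M^+)$) or maps canonically to such a surviving element with boundedly many preimages, so that $|V| \leq g(|\FF|, d)$ for an explicit $g$.

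The main obstacle will be the first milestone: controlling $\csd(M^+)$ by a function of $d$ alone. A priori, if many progressive vectors of $M$ span a large subspace of $\FF^m$, adjoining them as new columns could demand many independent contraction$^*$ moves, inflating the depth beyond any constant bound above $d$. Overcoming this requires exploiting the progressiveness of all $\ve{v} \in V$ simultaneously, rather than one at a time. Should a direct depth-preservation argument fail, a backup strategy is to proceed in a layered fashion: apply Theorem~\ref{thm:main1} recursively to the quotient matroids $M/\ve{v}$, partition $V$ according to which component of the top-level decomposition of $M$ each $\ve{v}$ affects, bound the number of such classes by the structure of the decomposition, and bound the size of each class by induction on $d$; combining these estimates still yields the claimed function $g(|\FF|, d)$.
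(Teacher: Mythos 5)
Your proposal takes a genuinely different route from the paper, but it has a real gap at what you yourself flag as the ``first milestone,'' and the later steps also rely on unjustified claims.

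The key difficulty with the augmentation approach is exactly the one you worry about: there is no reason \emph{a priori} why $\csd(M^+)$ should be within an additive constant of $\csd(M)$. The heuristic that each progressive $\ve{v}$ ``already collapses itself at no extra cost'' applies to one $\ve{v}$ at a time; the optimal decomposition of $M$ need not contract along \emph{any} of the progressive directions, and interleaving a potentially huge set of extra contraction moves is precisely the kind of thing that can blow up the depth (for instance, if the adjoined columns create long circuits, Proposition~\ref{thm:csd-circuits} forces $\csd(M^+)$ to grow logarithmically in their length). You have not given an argument that rules this out, and I do not see how to get one without essentially already knowing that the progressive vectors live in a low-dimensional subspace --- which is the conclusion you are trying to reach. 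The final step is also problematic: once you delete down from $M^+$ to a deletion-minimal $N$, there is no reason the surviving elements include the adjoined ones (``preferring non-$V$ elements first'' is not a licence), and the claim that each $e_{\ve{v}}$ ``maps canonically to a surviving element with boundedly many preimages'' is not substantiated. The backup recursive/layered strategy is likewise too vague to assess.

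The paper's proof avoids all of this and is much shorter. It applies Theorem~\ref{thm:main1} once to extract a restriction $M'$ of $M$ with $\csd(M') = d$ and $|M'| \leq f(|\FF|,d)$, and lets $S$ be the corresponding column set. The one lemma-level observation is then: if $\ve{v} \notin \Span(S)$, then contracting by $\ve{v}$ does not change which subsets of $S$ are dependent, so $M'/\ve{v} \cong M'$ and $\csd(M'/\ve{v}) = d$; but $M'/\ve{v}$ is a restriction of $M/\ve{v}$, so by Lemma~\ref{lem:restriction} $\csd(M/\ve{v}) \geq d$, meaning $\ve{v}$ is not progressive. Hence every progressive vector lies in $\Span(S)$, of which there are at most $|\FF|^{|S|} \leq |\FF|^{f(|\FF|,d)}$. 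This keeps all the work in $M$, never forms an auxiliary matroid whose $\csd$ must be re-bounded, and turns the progressiveness hypothesis into a linear-algebraic containment rather than a decomposition-surgery argument. I would recommend reorienting your proof around locating a small certificate submatroid $M'$ and showing progressive vectors must ``touch'' it, rather than trying to encode the progressive set into an augmented matroid.
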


Finally, we note that Theorems~\ref{thm:main1} and~\ref{thm:few_moves} can be extended to $\QQ$-representable matroids with small entry complexity.

\paragraph{Related work.}
Characterizing obstructions is one of the classical problems surrounding
structural parameters. For matroids of branch-width at most $k$, it is known that 
each excluded minor has size at most $(6^{k+1} - 1)/5$ due to the result of 
Geelen, Gerards, Robertson and Whittle~\cite{GeeGRW03}. In~\cite{KanKKO23}, Kanté, Kim, Kwon and Oum studied obstructions for matroid 
path-width. Precisely, they proved that every excluded minor representable over a finite
field $\FF$ for the class of matroids of path-width at most $k$ has 
at most $2^{|\FF|^{O(k^2)}}$ elements. 

Various structural and algorithmic properties of contraction$^*$-depth were studied in~\cite{ChaCKKP20}, who studied the connection of  contraction$^*$-depth to integer programming, and gave an \FPT algorithm for computing contraction$^*$-depth of matroid representable over finite fields.
The closure properties of contraction-depth in relationship to contraction$^*$-depth was studied by Bria\'nski, Kr\'a\v{l} and Lamaison~\cite{BriKL23}, who proved that the minimum contraction-depth of a matroid $M'$
    that contains $M$ as a restriction is equal to the contraction$^*$-depth of $M$ increased by one.

A result analogous to our Theorem~\ref{thm:few_moves} was proven for graphs of bounded treedepth in~\cite{DvorakGT12}, and was used in~\cite{BoulandDK12} to show that the graph isomorphism problem is in FPT for graph classes of bounded treedepth.

\section{Preliminaries}

In this section, we recall the notation and preliminaries needed throughout the paper.

\subsection{Graph theory}

All graphs in this paper are finite, undirected and simple, without loops. We use standard graph theoretic notation and terminology. By a \emph{rooted tree} we mean a connected acyclic graph with a specified vertex, called the \emph{root} of the tree. All trees in this paper are rooted, unless stated otherwise. The \emph{depth} of a tree $T$ is the number of edges on the longest leaf-to-root path in $T$. Let $u,v$ be two vertices of a tree $T$. We say that $u$ is an ancestor of $v$ if $u$ lies on the unique path from $v$ to the root of $T$. The closure of a tree $T$, denoted by $cl(T)$, is the graph obtained by making every vertex of $T$ adjacent to all its ancestors (except itself).

A rooted forest is a graph in which each connected component is a rooted tree. The depth of a rooted forest is the largest depth of its connected components. The closure of a rooted forest $F$, denoted by $cl(F)$, is the graph obtained from $F$ by taking the closure of each connected component. 

Let $G$ be a graph. A rooted forest $F$ is a \emph{elimination forest} of $G$ if $V(G) = V(F)$ and $G$ is a subgraph of $cl(F)$. The \emph{tree-depth} of $G$, denoted by $\td(G)$, is defined as 
$$td(G):=\min_{F} depth(F) + 1,$$ where the minimum is taken over all elimination forests of $G$.

%rooted forest $F$ such that $G$ is a subgraph of $\cl(F)$. We call $F$ to be elimination forest of $G$.

The following simple observation follows directly from the
definition of tree-depth.
\begin{lemma}
\label{lem:clique}
    Let $G$ be a graph and $F$ its elimination forest. Then in every clique $K$ of $G$, all vertices of $K$ are pairwise comparable by the ancestor relation of $F$. In particular, in every clique $K$ of $G$ there is a vertex $v$ sucht that all vertices of $K$ are  ancestors of $v$ in $F$.
\end{lemma}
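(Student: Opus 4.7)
The plan is to derive both parts of the lemma directly from the definitions of elimination forest and of the closure $cl(F)$ recalled in the preliminaries. No auxiliary constructions or machinery are needed; the whole argument is a short unfolding of definitions, so I do not anticipate any genuine obstacle. The key fact that will be invoked is that, by construction, every edge of $cl(F)$ joins two vertices one of which is a (proper) ancestor of the other in $F$.

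For the first assertion (pairwise comparability), I would take any two distinct vertices $u,v \in K$. Since $K$ is a clique in $G$, the pair $uv$ is an edge of $G$. As $F$ is an elimination forest of $G$, we have $G \subseteq cl(F)$ by definition, hence $uv$ is also an edge of $cl(F)$. By the characterization of edges of $cl(F)$ noted above, either $u$ is an ancestor of $v$ in $F$ or $v$ is an ancestor of $u$ in $F$. Thus $u$ and $v$ are comparable under the ancestor relation of $F$, as required.

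For the ``in particular'' statement, observe that the first part shows that the ancestor relation restricted to $K$ is a linear order on this finite set. Therefore $K$ has a unique minimum element $v$ with respect to this order, namely the vertex of $K$ of largest depth in $F$. By the definition of minimality under the ancestor relation, every other vertex of $K$ is then an ancestor of $v$ in $F$, which completes the proof. The only subtle point worth being explicit about is that one uses $G \subseteq cl(F)$ (an inclusion, not an equality), which is exactly what the preliminaries prescribe.
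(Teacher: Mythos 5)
Your proof is correct, and since the paper leaves this lemma as an unproved observation ("follows directly from the definition of tree-depth"), your argument is exactly the intended unfolding of definitions: edges of $G$ lie in $cl(F)$, edges of $cl(F)$ join ancestor–descendant pairs, and a finite set linearly ordered by the ancestor relation has a deepest element. The only cosmetic remark is that with the paper's convention a vertex is its own ancestor, so the chosen $v$ trivially satisfies the condition along with the other vertices of $K$.
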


\subsection{Linear algebra and matrices}
\label{sec:linalg}
We assume familiarity with vector spaces and basic notions of linear algebra. In this section, we mainly review the concepts and notation used in this paper. In what follows, $\F$ always denotes a field; usually this will be a finite field or $\Q$. All vector spaces we consider are finite-dimensional. Vectors are written in bold, while their coordinates are written in normal font. For example, $\vev$ denotes a vector, while $v_i$ its i-th coordinate.

Vectors $\ve{v_1},\ldots,\ve{v_k}$ from a vector space $V$ are \emph{linearly 
independent} if $\alpha_1 \ve{v_1} + \ldots + \alpha_k\ve{v_k} = \ve{0}$ implies $\alpha_1 = \ldots =\alpha_k = 0$; otherwise they are \emph{linearly dependent}.
Let $S$ be a set of vectors from a vector space $V$ over $\F$. By  $\Span(S)$ we denote the set of all $\F$-linear combinations of vectors from $S$, i.e. $\Span(S) := \{\alpha_1 \ve{v_1} + \ldots + \alpha_k \ve{v_k}~|~ k \in \N, \alpha_i \in \F, \ve{v_i} \in S\}$. Note that $\Span(S)$ is always a linear subspace of $V$.

For two vectors $\vev = (v_1,\ldots, v_n)^\top$ and $\vew = (w_1,\ldots, w_n)^\top$ in $\F^n$ we define $\vev\cdot \vew = \sum_{i=1}^n v_iw_i$.
Let $W$ be a linear subspace of $\F^n$. We define the \emph{orthogonal complement} of $W$ by $W^\bot := \{\vev \in \F^n~|~\vev \cdot \vew = 0 \text{ for all $\vew \in W$}\}$. Note that we have $(W^{\top})^\top = W$ and $(U \cap W)^\bot = \Span(U^\bot \cup W^\bot)$. Also, every $\ve{v} \in \F^n$ can be uniquely written as $ \ve{v'} +  \ve{v''}$ where $\ve{v'}\in W$ and $\ve{v''} \in W^\bot$. From this one easily shows that $W \cap \Span(\ve{v})^\bot = W \cap \Span(\ve{v'})^\bot$.
%, because for any $\ve{w} \in W$ with $\ve{w} \in span(\ve{v})^\bot$ we have $\ve{v}\cdot\ve{w} =0$, from which we get and so $\ve{v'}\cdot\ve{w} =0$.

%; for one element set $\{w\}$ we will write $w^{\bot}$ instead of $\{w\}^\bot$.

Let $V$ be a vector space over $\F$.
A set $B = \{\ve{v_1}, \ldots, \ve{v_k}\}$ of vectors is a \emph{basis} of $V$ if $B$ is inclusion-wise minimal such that $V = \Span(\ve{v_1}, \ldots, \ve{v_k})$. We say that a basis $B$ is \emph{orthogonal} if $\ve{v_i}\cdot \ve{v_j} = 0$ if and only if $i\not= j$ for each $i,j$. By the Gram-Schmidt orthogonalization, for any finite-dimensional vector space $V$ and any $\ve{v} \in V$ with $\ve{v}\not=\ve{0}$ we can find an orthogonal basis $\ve{v_1}, \ldots, \ve{v_k}$  of $V$  such that $\ve{v_1} = \ve{v}$.

Let $V$ be a vector space over $\F$ and $W$ be its linear subspace. 
The \emph{quotient space} $V/W$ has as its elements the equivalence classes of relation $\sim$ defined by $\ve{v} \sim \ve{u}$ if $\ve{v} - \ve{u} \in W$. The vector space structure $V/W$ is defined in the natural way. 

%Let $V$ be  vector space and $K$ its linear subspace.

% Let $A \in K^{m\times n}$ be matrix over $k$. The \emph{rank} of $A$, denoted by $rk(A)$, is the maximum size of linearly independent set of columns of $A$, and equivalently, the maximum size of linearly independent set of rows of $A$. The \emph{kernel} of $A$ is $\ker(A) = \{\ve{v} \in \F^n~|~A\ve{v} = \ve{0}\}$. The \emph{nullity} of $A$ is the dimension of $\ker(A)$. The rank-nullity theorem states that $rk(A) + nullity(A) = n$.  We will also use the following result, called the nullity theorem.

% \todo{remove if possible}
% \begin{theorem}[Nullity theorem,~\cite{FieM86}]
% \label{thm:nullity}
%     Let $A$ be an invertible $n\times n$ matrix and let $k < n$. Let $B$ be the submatrix of $A$ given by its first $k$ rows and columns of $A$ and let $C$ be the submatrix of $A^{-1}$ given by its last $n-k$ rows and columns. Then $nullity(B) = nullity(C)$.
% \end{theorem}

For a matrix $A$ over $\Q$, the \emph{entry complexity} of $A$, denoted by $ec(A)$, is the maximum length of a binary encoding of an entry. The length of a binary encoding of a rational number $r = p/q$ with $p$ and $q$ being coprime
is $\lceil \log_2(|p| + 1)\rceil + \lceil(\log_2
|q| + 1)\rceil$.

Finally, we define the \emph{tree-depth of matrices}. This is done by measuring the tree-depth of a graph associated with the matrix. There exist several ways to derive such underlying graph; in this paper, we focus on the concept of so called \emph{dual graph}. Formally, the dual graph of a matrix $A$, denoted by $G_D(A)$, is the graph where vertices one-to-one correspond to the rows of $A$ and there is an edge between two rows $r_1$ and $r_2$ if there exists a column $c$ such that both entries of $r_1$ and $r_2$ at $c$ are non-zero. The dual tree-depth of a matrix $A$ is then defined as $td_D(A):=td(G_D(A))$.

%\todo{Check and rephrase the last paragraph if necessary -- it might be a bit weirdly written}

% \begin{theorem}
% Two finite-dimensional vector spaces over $\F$ are isomorphic if and only if they have the same dimension.    
% \end{theorem}
 
\subsection{Matroids}

For matroids, our presentation is primarily based on the monography of Oxley~\cite{Oxl11}.

A \emph{matroid} $M$ is a pair $(X, \mathcal{I})$, where $X$ is a finite set and $\mathcal{I} \subseteq 2^X$ a collection of subsets
that satisfies the following three \emph{matroid axioms}:
\begin{enumerate}
    \item $\emptyset \in \mathcal{I}$.
    \item If $A \in \mathcal{I}$ and $B \subseteq A$, then $B \in \mathcal{I}$.
    \item if $A, B \in \mathcal{I}$ and $|A| > |B|$, then there exists an element $x \in A \setminus B$ such that 
    $B \cup \{x\} \in \mathcal{I}$.
\end{enumerate}

The set $X$ is called the \emph{ground set} of $M$, and the sets in $\mathcal{I}$ are referred to as \emph{independent}. We usually refer to elements of the ground set of a matroid $M$ as \emph{elements} of $M$.

If $X' \subseteq X$ is a set of elements of a matroid $M$, then $M[X']$ denotes the \emph{restriction} of $M$ to $X'$. This is a matroid with the ground set begin $X'$, and a subset of elements of this matroid is independent if and only if it is independent in the original matroid $M$. For a subset $X' \subseteq X$, the \emph{rank} of $X'$, denoted by $r_M(X')$ is the largest size of an independent set contained in $X'$. The rank of $M$ is $r(M) = r_M(X)$.
A \emph{basis} of a matroid $M$ is a maximal independent set of the ground set, while 
 a \emph{circuit} of the matroid is a minimal dependent set.
A matroid $M$ is \emph{connected} if any two elements of $M$ are contained in a common circuit.
The property of being contained in a common circuit is transitive, that is, if two elements $e, e' \in M$ are contained in a common circuit and $e'$ and $e''$ are contained in a common circuit,
then also $e, e''$ are contained in a common circuit. A \emph{component} of a matroid $M$ is an inclusion-wise maximal connected restriction of $M$.

Let $A$ be a matrix over some field $\FF$ and $M(A)$ its vector matroid, i.e., a matroid, where the grounds set consists of the column vectors of $A$ and whose independent sets are the linearly independent sets of column vectors of $A$. If a matroid $M$ is isomorphic to $M(A)$, then we say that $M$ is \emph{representable} over $\FF$ or $\FF$\emph{-representable}. The matrix $A$ is then called the \emph{representation} for $M$ over $\FF$ or $\FF$\emph{-representation}. When a matroid $M$ is given by its representation $A$, we say that $M$ is \emph{$\F$-represented} or just \emph{represented}, if $\F$ is clear from the context or not important.

% we will usually denote it $M(A)$. We then say that $M$ is \emph{$\F$-represented}, or \emph{represented} if $\F$ is clear from the context.

\subsubsection{Matroid contraction and deletion}

Let $M = (X, \mathcal{I})$ be a matroid and $X'\subseteq X$ a set of its elements. The \emph{restriction} of $M$ to $X'$, denoted by 
$M[X']$, is the matroid on the ground set $X'$ where a subset $X'' \subseteq X'$ is independent if it was independent in the original matroid $M$. By the \emph{deletion} of $X'$ from the matroid $M$ we mean the operation that takes $M$ to the matroid $M\setminus X':= M[X \setminus X']$.
To simplify the notation, if $X'$ is, in fact, just a single element, we write $M \setminus x$ rather than $M \setminus \{x\}$.
%  The operation of \emph{deletion} of $X'$ from $M$ is then the restriction of $M$ to $X \setminus X'$, and is denoted $M \setminus X'$. To simplify the notation, if $X'$ is in fact just a single element, we write $M \setminus x$ rather than $M \setminus \{x\}$.

 The \emph{contraction} of a matroid $M$ by $X'$, denoted by $M/X'$, is the matroid with the ground set $X \setminus X'$ and the independence notion defined as follows: a subset $X'' \subseteq (X \setminus X')$ is independent in $M/X'$ if and only if:
 \begin{align*}
     r_M(X'' \cup X') = |X''| + r_M(X')
 \end{align*}

 For represented matroids, contraction can be defined as follows. Let $M = M(A)$ be a represented matroid and let $h$ be the number of rows of $A$. Further, let $K$ be a linear subspace of $\F^h$. We then define the matroid $M/K$ as the matroid whose ground set consists of the columns of $A$ and where columns $\ve{v_1},\ldots,\ve{v_k}$ are dependent if there exist $\alpha_1,\ldots, \alpha_k \in \F$, not all $0$, such that $\alpha_1\ve{v_1} + \ldots \alpha_k \ve{v_k} \in K$. If $K$ is a $1$-dimensional subspace of $\F^h$ spanned by a vector $\ve{v} \in \F^h$, we often write $M/\ve{v}$ instead of  $M/K$.  We remark that contraction for represented matroids is usually introduced using quotient spaces, but the more explicit (and equivalent) definition given above will be useful for us.

 %We remark that this notion is usually defined ... but the presentation (and notation) we chose is closer to our use of the concept in Section ...

% TODO: I removed the definition of rank; we need to think a bit about where and how to put it to avoid things being bloated.

% For represented matroids, contraction can be defined via the notion of a quotient space. If $M$ is a matroid represented over a field $\FF$ and $A$ the matroid with the representation of $M$ in the quotient space by $A$. Note that in this case the ground sets of the matroids $M$ and $M/A$ have the same number of elements. Analogously to the case of deletion, in case we are contracting our matroid $M$ by a single element, we write $M / e$ instead of $M / \{e\}$.

\subsubsection{Matroid duality}

Let $M = (X,\mathcal{I})$ be a matroid. We define the \emph{dual} of $M$, denoted by $M^*$, to be the matroid on the same ground set, and in which a set $S$ is independent if and only if $M$ has a basis $B$ disjoint from $S$.

We will need the following two facts about matroid duality.

\begin{lemma}[Deletion is dual to contraction]
  For every matroid $M$ and any $e$ we have $(M\setminus e)^* = M^*/e$.  
\end{lemma}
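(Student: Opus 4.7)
The plan is to show that the two matroids $(M\setminus e)^*$ and $M^*/e$ have identical rank functions on their common ground set $E\setminus\{e\}$, where $E=E(M)$; since a matroid is determined by its rank function, this gives equality. Computing the rank functions directly using the definitions from the preliminaries avoids a case analysis on whether $e$ is a coloop of $M$.

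First I would derive the standard rank formula for a matroid dual: for any matroid $N$ on ground set $E$ and every $S\subseteq E$,
\[ r_{N^*}(S) \;=\; |S| + r_N(E\setminus S) - r(N). \]
From the paper's basis-based definition of $N^*$, one has $r_{N^*}(S)=\max_{B}|S\setminus B|=|S|-\min_{B}|S\cap B|$ as $B$ ranges over bases of $N$; by first choosing a basis of $N[E\setminus S]$ and then extending it to a basis of $N$ (an extension that necessarily only adds elements of $S$), one obtains $\min_{B}|S\cap B|=r(N)-r_N(E\setminus S)$, which yields the formula. Similarly, the independence-based definition of contraction in the preliminaries implies the standard rank identity $r_{N/X}(S)=r_N(S\cup X)-r_N(X)$ for $S\subseteq E\setminus X$, a quick consequence of the maximality of a witness independent set.

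Next I would substitute and simplify on both sides. For $S\subseteq E\setminus\{e\}$, using the contraction rank identity together with the dual rank formula applied to $M$,
\begin{align*}
r_{M^*/e}(S) &= r_{M^*}(S\cup\{e\})-r_{M^*}(\{e\}) \\
&= \bigl[(|S|+1)+r_M(E\setminus(S\cup\{e\}))-r(M)\bigr]-\bigl[1+r_M(E\setminus\{e\})-r(M)\bigr] \\
&= |S|+r_M(E\setminus(S\cup\{e\}))-r_M(E\setminus\{e\}).
\end{align*}
For the other side, applying the dual rank formula to $N=M\setminus e$ and using $r_{M\setminus e}(T)=r_M(T)$ for $T\subseteq E\setminus\{e\}$ and $r(M\setminus e)=r_M(E\setminus\{e\})$ produces exactly the same expression. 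Hence the rank functions agree, and the lemma follows.

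The only mildly subtle point — the main obstacle, if any — is the extension argument in the proof of the dual rank formula; once that is in hand, the rest is a short computation. An alternative, arguably more in the spirit of the paper's basis-based exposition of duality, would be to compare bases of the two matroids directly, but this forces a case split on whether $e$ is a coloop of $M$ (equivalently a loop of $M^*$), since the description of bases of a contraction $N/e$ depends on whether $e$ is a loop of $N$; the rank-function route sidesteps this inelegance.
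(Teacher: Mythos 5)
Your proof is correct. The paper itself states this lemma without proof, as one of two standard facts about matroid duality drawn from the background literature (Oxley's monograph), so there is no internal argument to compare against. Your route — establishing the dual rank formula $r_{N^*}(S)=|S|+r_N(E\setminus S)-r(N)$ from the paper's basis-based definition of $N^*$, recalling $r_{N/X}(S)=r_N(S\cup X)-r_N(X)$, and then computing that both $r_{(M\setminus e)^*}$ and $r_{M^*/e}$ reduce to $|S|+r_M(E\setminus(S\cup\{e\}))-r_M(E\setminus\{e\})$ — is exactly the standard textbook derivation, and all the intermediate steps (in particular the extension argument giving $\min_B|S\cap B|=r(N)-r_N(E\setminus S)$) check out. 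Your remark that the rank-function computation avoids the coloop/loop case split that a direct basis comparison would require is a fair observation about why this route is cleaner.
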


\begin{lemma}
\label{lem:dual_CCs}
    For a matroid $M$, a subset $S$ of its ground set is a connected component of $M$ if and only if it is a connected component of $M^*$.
\end{lemma}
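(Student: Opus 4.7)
The plan is to characterise connected components in a way that is manifestly preserved by duality. Concretely, I will show that a subset $S$ of the ground set of $M$ is a union of connected components if and only if every basis $B$ of $M$ decomposes as a disjoint union $B = B_S \cup B_{X\setminus S}$ with $B_S$ a basis of $M[S]$ and $B_{X\setminus S}$ a basis of $M[X \setminus S]$. The forward direction is a standard consequence of the circuit definition given in the paper: by the transitivity of ``being contained in a common circuit'', no circuit of $M$ crosses between $S$ and its complement, so independence in $M$ is the conjunction of independence in $M[S]$ and in $M[X \setminus S]$, which immediately gives the basis splitting. Conversely, if every basis splits then $r_M(A) = r_{M[S]}(A \cap S) + r_{M[X\setminus S]}(A \setminus S)$ for every $A \subseteq X$, from which one argues that any circuit must lie entirely in $S$ or entirely in $X \setminus S$, so no element of $S$ and element of $X\setminus S$ can share a circuit.

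The key observation is then that this basis-splitting property is self-dual. By the definition of the dual, bases of $M^*$ are precisely the set-theoretic complements $X \setminus B$ of bases $B$ of $M$. If $B = B_S \cup B_{X\setminus S}$ splits along $S$, then
\[
X \setminus B \;=\; (S \setminus B_S) \;\cup\; ((X\setminus S) \setminus B_{X \setminus S}),
\]
and the two pieces are bases of $(M[S])^*$ and $(M[X\setminus S])^*$ respectively, again by the definition of the dual applied to $M[S]$ and $M[X \setminus S]$. To turn this into a statement about $M^*$ itself, I need that $(M[S])^* = M^*[S]$ when $S$ is a union of components of $M$. This is the one subtlety: restriction and duality do not commute in general, since $(M \setminus A)^* = M^*/A$. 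However, precisely because $X \setminus S$ is a union of components, deletion and contraction of $X \setminus S$ agree on $M$, so $(M[S])^* = (M \setminus (X \setminus S))^* = M^* / (X \setminus S) = M^*[S]$, and the splitting property transfers cleanly to $M^*$.

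Putting these together, a set $S$ is a union of connected components of $M$ if and only if every basis of $M^*$ splits along $S$, which by applying the same equivalence to $M^*$ (and using $(M^*)^{*} = M$) means exactly that $S$ is a union of connected components of $M^*$. Since the partition into connected components is, by its definition via the transitive common-circuit relation, the finest partition whose blocks are unions of components, the two partitions of $X$ induced by $M$ and by $M^*$ must coincide block by block, proving the lemma.

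The main obstacle, as already flagged above, is the lack of commutativity between restriction and duality; reconciling this is what makes the basis-splitting criterion the right tool, since its self-duality is then essentially just set complementation within $X$. Every other step is either the definition of connected components via circuits or straightforward bookkeeping with bases and their complements.
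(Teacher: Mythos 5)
The paper does not actually prove Lemma~\ref{lem:dual_CCs}; it is stated as one of ``two facts about matroid duality'' and cited as standard. So there is no ``paper's proof'' to compare against, only the question of whether your argument is sound.

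Your argument is correct. The one place that deserves scrutiny is the step $(M[S])^* = M^*[S]$, since restriction and duality do not commute in general, and you handle it properly: because $X\setminus S$ is a separator of $M$ you have $M\setminus(X\setminus S) = M/(X\setminus S)$, and then $(M\setminus(X\setminus S))^* = (M/(X\setminus S))^* = M^*\setminus(X\setminus S) = M^*[S]$, which is exactly what the transfer of the basis-splitting property needs. Note also that this identity is only invoked in the direction where $S$ is already assumed to be a union of components of $M$, so there is no circularity. The final step --- that two partitions each of which refines the other must coincide --- closes the argument from ``union of components'' down to ``component.''

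Worth mentioning for economy: the standard and somewhat shorter route goes through the dual rank function $r^*(A) = |A| - r(M) + r(X\setminus A)$. A set $S$ is a union of components iff $r(S) + r(X\setminus S) = r(M)$, and a one-line computation shows this is equivalent to $r^*(S) + r^*(X\setminus S) = r^*(M)$, giving the ``union of components'' equivalence immediately without passing through bases, restrictions, or the deletion/contraction identities. Your basis-complement approach buys nothing extra here but costs the careful bookkeeping around $(M[S])^*$ versus $M^*[S]$; the rank-function version sidesteps that entirely. Both are fine, but if you find yourself reproving separator self-duality in the future, the rank identity is the cleaner tool.
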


%$M^* = (X,\mathcal{I}^*)$ to be the matroid 

\subsection{Matroid parameters and their properties}

\begin{definition}[\cite{DeVKO20}]
\label{def:dd}  
The \emph{deletion-depth} $\dd(M)$ of a matroid $M$ is defined recursively as:
\begin{itemize}
    \item If $M$ has single element, then $\dd(M) = 1$.
    \item If $M$ is not connected, then the deletion-depth of $M$ is the maximum deletion-depth of a component of $M$.
    \item If $M$ is connected, then $\dd(M) = 1 + \min_{e \in M} \dd(M\setminus e)$.
\end{itemize}
\end{definition}

For a matrix $A$, the deletion depth of $A$ is defined as $\dd(A) := \dd(M(A))$.
% We will need the following theorem proven in~\cite{ChaCKKP19}.
% \begin{theorem}[\cite{ChaCKKP19}]
%    Let $A$ be a matrix over a field $\F$. The deletion-depth of $A$ is the minimum primal tree-depth of any matrix $A'$ that is row equivalent to $A$. 
% \end{theorem}

\begin{definition}[\cite{DeVKO20}]
\label{def:cd}  
The \emph{contraction-depth} $\dd(M)$ of a matroid $M$ is defined recursively as:
\begin{itemize}
    \item If $M$ has single element, then $\cd(M) = 1$.
    \item If $M$ is not connected, then the contraction-depth of $M$ is the maximum contraction-depth a component of $M$.
    \item If $M$ is connected, then $\cd(M) = 1 + \min_{e \in M} \cd(M/ e)$.
\end{itemize}
\end{definition}

From the fact that deletion and contraction are dual operations one easily proves the following.
\begin{lemma}
\label{lem:cd_dd_dual}
    For any matroid $M$ we have $cd(M) = dd(M^*)$ and $cd(M^*) = dd(M)$.
\end{lemma}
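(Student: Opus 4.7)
The plan is to induct on $|M|$, the size of the ground set of $M$, using the two ingredients recorded in the preliminaries: the componentwise duality of Lemma~\ref{lem:dual_CCs}, and the element-level duality $(M\setminus e)^* = M^*/e$. From the latter, by applying it to $M^*$ in place of $M$ and then taking duals (using $M^{**}=M$), one immediately gets the companion identity $(M/e)^* = M^*\setminus e$, which is what will actually be needed to pass between a contraction step in $M$ and a deletion step in $M^*$.

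For the base case $|M|=1$, both $\cd(M)$ and $\dd(M^*)$ equal $1$ directly from the recursive definitions. For the inductive step, distinguish two cases. If $M$ is disconnected, let $C_1,\dots,C_k$ be its components; by Lemma~\ref{lem:dual_CCs} the components of $M^*$ are exactly $C_1^*,\dots,C_k^*$. Each $C_i$ is strictly smaller than $M$, so the inductive hypothesis gives $\cd(C_i)=\dd(C_i^*)$, and taking maxima yields
\[
\cd(M)=\max_i \cd(C_i)=\max_i \dd(C_i^*)=\dd(M^*).
\]
If $M$ is connected, then by Lemma~\ref{lem:dual_CCs} so is $M^*$, and the recursive definitions together with the identity $(M/e)^*=M^*\setminus e$ give
\[
\cd(M)=1+\min_{e\in M}\cd(M/e)=1+\min_{e\in M}\dd((M/e)^*)=1+\min_{e\in M^*}\dd(M^*\setminus e)=\dd(M^*),
\]
where the second equality uses the inductive hypothesis on the smaller matroid $M/e$.

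Once the first equality $\cd(M)=\dd(M^*)$ is established, the second equality $\cd(M^*)=\dd(M)$ is obtained for free by applying the first identity to the matroid $M^*$ and using $M^{**}=M$.

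I do not expect any real obstacle here: the argument is a clean recursion that mirrors the two recursive clauses (connected versus disconnected) in Definitions~\ref{def:dd} and~\ref{def:cd}, and the only subtlety is making sure to apply the operator-level duality in the correct direction, namely invoking $(M/e)^*=M^*\setminus e$ rather than the form stated in the excerpt. This is handled in one line by taking duals of the stated identity.
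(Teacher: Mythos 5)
Your argument is correct. The paper does not actually prove this lemma---it only asserts that it follows easily from the duality of deletion and contraction---so there is no paper proof to compare against; your induction on $|M|$ fills in exactly the details the paper leaves to the reader, using the two tools recorded in the preliminaries. One small point worth flagging: in the disconnected case, Lemma~\ref{lem:dual_CCs} as stated only guarantees that the \emph{ground sets} of the components of $M$ and $M^*$ coincide, whereas your chain of equalities also needs $C_i^* = M^*[S_i]$ as matroids; this is the standard fact that duality commutes with direct sums, $(M_1\oplus M_2)^* = M_1^*\oplus M_2^*$, which you use implicitly but never cite. It is elementary and certainly within the intended level of rigor, but if you wanted to be fully precise you would state it (or derive it from the circuit/basis description of the dual).
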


For representable matroids, a generalization of contraction-depth called contraction$^*$-depth was introduced in~\cite{KarKLM17}. The difference between these two notions is that in contraction$^*$-depth we do not have to perform contraction by an element of $M$, but by arbitrary one-dimensional subspace of the space in which the matroid is represented.
\begin{definition}[\cite{ChaCKKP19}]
\label{def:csd}  
For a represented matroid $M = M(A)$, the \emph{contraction$^*$-depth} $\csd(M)$ of $M$ is defined recursively as:
\begin{itemize}
    \item If $M$ has rank $0$, then $\csd(M) = 0$.
    \item If $M$ is not connected, then the contraction$^*$-depth of $M$ is the maximum contraction$^*$-depth a component of $M$.
    \item If $M$ is connected, then $\csd(M) = 1 + \min \csd(M/\ve{v})$, where $\ve{v} \in \F^h$ (here $h$ is the number of rows in $A$).
\end{itemize}
\end{definition}

For a matrix $A$, we define $\csd(A):= \csd(M(A))$.

\begin{theorem}[\cite{ChaCKKP19}]
   Let $A$ be a matrix over a field $\F$. The contraction$^*$-depth of $A$ is the minimum dual tree-depth of any matrix $A'$ that is row equivalent to $A$. 
\end{theorem}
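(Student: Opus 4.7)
The theorem asserts an equality, and since row-equivalence preserves the vector matroid $M(A)$, the quantity $\csd(M(A))$ depends only on the row-equivalence class of $A$. I would prove the two inequalities separately: (i) $\csd(M(A)) \leq \td_D(A)$ for every matrix $A$, and (ii) there exists $A'$ row-equivalent to $A$ with $\td_D(A') \leq \csd(M(A))$. The key bridge between the two viewpoints is a matrix-level reinterpretation of the contraction step in the definition of $\csd$: contracting $M(A)$ by the $r$-th standard basis vector $\ve{e}_r$ is precisely the vector matroid of the matrix obtained from $A$ by deleting its $r$-th row.

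For inequality (i), I would use a nested induction on $\td_D(A)$ (primary) and on the number of columns of $A$ (secondary). The base case $\td_D(A) = 0$ forces $A$ to have no rows, hence $\csd(M(A)) = 0$. For the inductive step, split on whether $M(A)$ is matroid-connected. If it is, fix an elimination forest $F$ of $G_D(A)$ witnessing $\td_D(A) = d$, let $r$ be the root of any tree of $F$, and note that deleting vertex $r$ from $G_D(A)$ yields the dual graph of the row-$r$-deleted matrix, while $F - r$ witnesses its dual tree-depth to be at most $d - 1$; the inductive hypothesis then gives $\csd(M(A)/\ve{e}_r) \leq d - 1$, and the recursive definition of $\csd$ yields $\csd(M(A)) \leq d$. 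If $M(A)$ is matroid-disconnected with components $C_1, \dots, C_k$, each $A[C_i]$ has dual graph a subgraph of $G_D(A)$ and strictly fewer columns, so the secondary induction gives $\csd(M[C_i]) \leq d$ and the maximum over components finishes the case.

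For inequality (ii), I would use a nested induction on $d = \csd(M(A))$ (primary) and on the number of columns (secondary). The base $d = 0$ corresponds to rank zero, where a row-equivalent representation with no zero rows has no rows at all and hence dual tree-depth $0$. For the inductive step with $M(A)$ matroid-connected, pick $\ve{v}^*$ attaining the minimum in the recursion and choose an invertible matrix $P$ with $P \ve{v}^* = \ve{e}_1$; setting $A'' = PA$, the contraction $M(A'')/\ve{e}_1$ equals the matroid represented by $A''$ with its first row removed and has contraction$^*$-depth $d - 1$. The inductive hypothesis yields $B'$ row-equivalent to this row-deleted matrix with $\td_D(B') \leq d - 1$; replacing the lower rows of $A''$ by the rows of $B'$ yields a matrix $B''$ row-equivalent to $A$, and an elimination forest for $G_D(B'')$ is obtained by attaching its first vertex as a new common root above the elimination forest of $G_D(B')$, giving $\td_D(B'') \leq d$. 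In the matroid-disconnected case, one transforms $A$ by row operations into block-diagonal form with one block per matroid component and applies the inductive hypothesis to each block separately.

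The main obstacle is the block-diagonal reduction invoked in the disconnected case of (ii): given that $M(A)$ has components $C_1, \dots, C_k$, one must show $A$ is row-equivalent to a matrix whose columns from distinct components are supported on pairwise disjoint row sets. This reflects a standard direct-sum decomposition of the row space of $A$ indexed by matroid components, but the argument needs care when the number of rows of $A$ exceeds $r(M(A))$: the row operations will annihilate the excess into zero rows, which contribute isolated vertices to the dual graph whose tree-depth contribution of at most $1$ must be absorbed into the bound.
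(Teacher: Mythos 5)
This theorem is cited from \cite{ChaCKKP19}; the present paper does not contain a proof of it, only the statement and Corollary~\ref{cor:dual_td} that is derived from it. So there is no proof in the paper to compare yours against directly, and I will instead assess your attempt on its own terms.

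Your overall plan is sound, and the crucial bridge is the right one: for the standard basis vector $\ve{e}_r$, the matroid $M(A)/\ve{e}_r$ (in the sense of contraction by a one-dimensional subspace) is exactly the vector matroid of $A$ with row $r$ deleted, and deleting row $r$ from $A$ deletes vertex $r$ from $G_D(A)$. The two inductions, and the reduction of a general contraction vector $\ve{v}^*$ to $\ve{e}_1$ by conjugating with an invertible $P$ satisfying $P\ve{v}^*=\ve{e}_1$, are all correct; one readily checks $M(PA)/\ve{e}_1 = M(A)/\ve{v}^*$ since $P^{-1}\Span(\ve{e}_1)=\Span(\ve{v}^*)$. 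The block-diagonalization you flag as the ``main obstacle'' is indeed needed, but it is not really in doubt: if $M(A) = M_1\oplus M_2$ on column sets $S_1,S_2$, then the projection of the row space $V$ onto the $S_1$-coordinates has image of dimension $r(S_1)$ and kernel $V\cap\FF^{S_2}$, so a dimension count gives $V = (V\cap\FF^{S_1})\oplus(V\cap\FF^{S_2})$, which is exactly what is needed to row-reduce to block form.

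Two places where your sketch is genuinely loose. First, in direction (i) you take ``the root of any tree of $F$''; this is not enough, because removing the root of a trivial tree does not decrease depth. You need to argue (as you can, from connectivity of $M(A)$) that $G_D(A)$ has a unique non-trivial component and to take $r$ to be the root of the tree covering it. Second, and more substantively, your base cases in both (i) and (ii) rely on reducing to matrices with no rows, but row equivalence preserves the number of rows; the paper's $\td_D$ counts all rows as vertices, so a rank-$0$ matrix with $m>0$ rows has $\td_D=1$, not $0$, and more generally a full-row-rank representation cannot in general be reached by row operations alone. This mismatch must be resolved by a convention (either allowing zero rows to be dropped, or assuming the input matrix has full row rank); it is implicitly resolved that way in~\cite{ChaCKKP19} and in the paper's Corollary~\ref{cor:dual_td}, but your proof should say so explicitly, and your bound in the disconnected case of (ii) should track the possible ``$+1$'' from isolated zero-row vertices carefully rather than waving it away.
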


Since $\csd(A) = \csd(M(A))$ and row equivalent matrices describe the same matroids, we have the following corollary.
\begin{corollary}
\label{cor:dual_td}
    If $M$ is a matroid representable over $\F$ with $\csd(M) = d$, then it can be represented by a matrix $A$ over $\F$ of dual tree-depth $d$.
\end{corollary}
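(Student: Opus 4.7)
The plan is to show that the corollary follows directly from the theorem immediately preceding it, by choosing an arbitrary starting representation and invoking row equivalence.

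First, since $M$ is representable over $\F$, I would fix any $\F$-representation $A_0$ of $M$, so that $M(A_0) = M$. By the definition $\csd(A_0) := \csd(M(A_0))$ given just before the theorem, we have $\csd(A_0) = \csd(M) = d$.

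Next, I would apply the theorem of~\cite{ChaCKKP19} stated just above the corollary to the matrix $A_0$. This theorem asserts that $\csd(A_0)$ equals the minimum dual tree-depth over all matrices row equivalent to $A_0$. Since $\csd(A_0) = d$, the minimum is attained, so there exists a matrix $A$ that is row equivalent to $A_0$ and satisfies $\td_D(A) = d$.

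Finally, I would use the standard fact that row equivalent matrices define the same vector matroid: if $A$ is obtained from $A_0$ by elementary row operations, then the linear dependencies among the columns are preserved, hence $M(A) = M(A_0) = M$. Therefore $A$ is an $\F$-representation of $M$ with dual tree-depth exactly $d$, which is the desired conclusion. There is no real obstacle here: the only thing to be careful about is to note that \enquote{representation} in the statement of the corollary means $M(A) = M$, and this is guaranteed by row equivalence, while the numerical bound $d$ on dual tree-depth is provided directly by the cited theorem.
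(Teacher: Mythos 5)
Your proof is correct and matches the paper's approach exactly: the paper derives this corollary in a single sentence from the preceding theorem of~\cite{ChaCKKP19}, using precisely the two facts you spell out, namely that $\csd(A) = \csd(M(A))$ by definition and that row-equivalent matrices represent the same matroid. Your write-up simply makes the implicit reasoning explicit, with no gaps.
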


We will use the fact that contraction$^*$-depth is closed under deletion of elements.
% \todo{Probably change this to minors and find source}
% \todo{Kristyna: source is the original Kardos paper~\cite{KarKLM17}}
\begin{lemma}[\cite{KarKLM17}]
\label{lem:restriction}
    For every representable matroid $M$ and any subset $X'$ of elements of $M$ we have $\csd(M[X]) \le \csd(M)$.
\end{lemma}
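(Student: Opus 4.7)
The plan is to avoid a direct recursive argument on the definition of $\csd$ — which is awkward because contraction does not decrease the number of elements of the matroid, making naive induction on $|E(M)|$ circular in the connected case — and instead to pass to the dual tree-depth characterization provided by Corollary~\ref{cor:dual_td}. The underlying idea is that restricting a matroid corresponds to discarding columns of its representation, which only removes edges from the dual graph, so the desired inequality reduces to monotonicity of tree-depth under subgraph containment.

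Concretely, I would first use Corollary~\ref{cor:dual_td} to fix an $\FF$-representation $A$ of $M$ with $\td_D(A) = \csd(M) = d$. Then I would let $A'$ be the submatrix of $A$ consisting of the columns corresponding to the elements of $X'$, so that $M(A') = M[X']$. Next, I would observe that $G_D(A')$ and $G_D(A)$ share the same vertex set (the rows of $A$), and that any edge of $G_D(A')$ is already an edge of $G_D(A)$: edges of a dual graph are witnessed by columns having two nonzero entries in the corresponding rows, and every column of $A'$ is a column of $A$. Hence $G_D(A')$ is a spanning subgraph of $G_D(A)$, and since any elimination forest of $G_D(A)$ is automatically an elimination forest of any spanning subgraph, I would deduce $\td_D(A') \le \td_D(A) = d$. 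To close the argument I would invoke the theorem preceding Corollary~\ref{cor:dual_td} (which characterizes $\csd$ as the minimum dual tree-depth over row-equivalent matrices), obtaining
\[
\csd(M[X']) \;=\; \csd(M(A')) \;\le\; \td_D(A') \;\le\; d \;=\; \csd(M).
\]

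The main, though quite mild, point requiring care is verifying the containment $G_D(A') \subseteq G_D(A)$ in the presence of possible degeneracies: a row of $A'$ could become identically zero even when the same row of $A$ was not, making the corresponding vertex isolated in $G_D(A')$. This does not derail the argument, since the vertex set is preserved and the edge set only shrinks, but it is the one place where one must be pedantic about the precise dual-graph construction. Apart from this bookkeeping the proof consists entirely of standard facts — subgraph monotonicity of tree-depth and the cited characterization theorem — and no case analysis on connectivity of $M$ or of $M[X']$ is required.
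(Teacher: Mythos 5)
The paper does not prove this lemma itself; it is cited from~\cite{KarKLM17}, so there is no in-paper proof to compare against. Your argument is nonetheless correct and is a clean derivation from the machinery the paper does set up: Corollary~\ref{cor:dual_td} supplies a representing matrix $A$ with $\td_D(A) = \csd(M)$, passing to the column submatrix $A'$ with $M(A') = M[X']$ only deletes edges of the dual graph while keeping the vertex set, tree-depth is monotone under spanning subgraphs (any elimination forest of $G_D(A)$ remains one for $G_D(A')$), and the characterization theorem preceding the corollary then gives $\csd(M[X']) \le \td_D(A') \le \td_D(A) = \csd(M)$. Your remark about rows of $A'$ that become identically zero is the right degeneracy to flag, and you are right that it is harmless since such rows are merely isolated vertices of $G_D(A')$. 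The alternative — and likely what the cited source does — is a direct structural induction on the recursive definition of $\csd$, inducting on rank (which drops under useful contractions even though the ground set does not), handling the disconnected case by taking components; that route avoids invoking the dual-tree-depth characterization but is more case-heavy. Given that the characterization is already available in this paper, your proof is arguably the more economical one to present here.
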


DeVos, Kwon and Oum~\cite{DeVKO20} established the following relationship between contraction-depth and maximum circuit length:

\begin{theorem}
    \label{thm:cd-circuits}
    Let $M$ be a matroid and $c$ the length of its largest circuit;
    if $M$ has no circuit, then set $c = 1$. 
    Then
    \begin{align*}
        \log_2{(c)} \leq \cd(M) \leq \frac{c(c + 1)}{2}
    \end{align*}
\end{theorem}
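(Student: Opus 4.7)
The plan is to prove the two inequalities independently, each by induction; the lower bound induction is on $\cd(M)$ while the upper bound induction is on $c$. In both cases, since a circuit is contained in a single connected component of $M$, I first reduce to the case where $M$ is connected, so that the recursive identity $\cd(M) = 1 + \min_{e} \cd(M/e)$ applies whenever $|M| > 1$.

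For the lower bound $\log_2(c) \leq \cd(M)$, the crux is the following claim: for every element $e \in M$, the matroid $M/e$ still contains a circuit of length at least $\lceil c/2 \rceil$. Granted this, the inductive hypothesis applied to $M/e$ (for the element $e$ achieving the minimum) yields $\cd(M/e) \geq \log_2(c/2) = \log_2(c) - 1$, and the bound follows. To prove the claim I distinguish three subcases based on the position of $e$ relative to a fixed longest circuit $C$. If $e \in C$, then $C \setminus \{e\}$ is itself a circuit of $M/e$ of length $c - 1 \geq \lceil c/2 \rceil$. If $e \notin \cl_M(C)$, a rank calculation shows $C$ remains a circuit of $M/e$ of full length $c$. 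The principal obstacle is the subcase $e \in \cl_M(C) \setminus C$: here $C \cup \{e\}$ contains at least two distinct $M$-circuits, and applying the strong circuit elimination axiom to $C$ and an $M$-circuit through $e$ contained in $C \cup \{e\}$ produces an $M$-circuit of length at least $\lceil c/2 \rceil + 1$ containing $e$, which after deletion of $e$ descends to a circuit of $M/e$ of the required length.

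For the upper bound $\cd(M) \leq c(c+1)/2$, the base case $c = 1$ corresponds to a matroid with no circuits, in which every element is a coloop, each component is a singleton, and $\cd(M) = 1$. For the inductive step with $c \geq 2$ the plan is to construct an elimination tree of depth at most $c(c+1)/2$ by iteratively contracting elements of longest circuits. Concretely, fixing a longest circuit $C$ of length $c$ and contracting its elements one by one along a single branch of the elimination tree incurs depth $c$, after which one would like to apply the inductive hypothesis to the residual matroid with maximum circuit length at most $c - 1$, so that by induction its contraction-depth is bounded by $(c-1)c/2$. The principal obstacle is that length-$c$ circuits of $M$ disjoint from $C$ may survive in $M/C$; handling them requires a careful combinatorial accounting of how successive contractions distribute among connected components, ensuring that the total depth along every root-to-leaf path of the elimination tree stays within the target bound $c + (c-1)c/2 = c(c+1)/2$.
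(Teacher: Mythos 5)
The paper does not prove Theorem~\ref{thm:cd-circuits}; it is stated as a citation to DeVos, Kwon, and Oum~\cite{DeVKO20}, so there is no in-paper proof to compare against. I therefore assess your proposal on its own terms.

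Your lower bound is nearly right, but the case $e \in \cl_M(C)\setminus C$ does not go through as written. Strong circuit elimination gives no control on the length of the circuit it produces, so the assertion that it ``produces an $M$-circuit of length at least $\lceil c/2\rceil + 1$ containing $e$'' is unjustified; and even granted such a circuit $D$, the set $D\setminus\{e\}$ is dependent in $M/e$ but need not itself be a circuit. The intermediate claim you need (that $M/e$ has a circuit of length at least $\lceil c/2\rceil$) is true, but the clean route is a corank argument rather than circuit elimination: assuming $e$ is not a loop, the restriction $(M/e)|C$ has $c$ elements, rank $c-2$, and no coloops, so its dual is a loopless rank-$2$ matroid whose parallel classes have sizes summing to $c$; the smallest parallel class has size at most $\lfloor c/2\rfloor$, and its complement in $C$ is a circuit of $(M/e)|C$ of size at least $\lceil c/2\rceil$. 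Your two other cases and the overall induction on $\cd(M)$ are fine.

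Your upper bound has a genuine gap which you flag but do not close, and it is the crux of the argument. To run the induction on $c$ after contracting a longest circuit $C$, you must know that every circuit of $M/C$ has length at most $c-1$. Circuits of $M/C$ that arise from $M$-circuits meeting $C$ do shrink, but a length-$c$ $M$-circuit $D$ disjoint from $C$ can persist as a circuit of $M/C$ of full length $c$ (this happens precisely when $r_M(C\cup D)=r_M(C)+r_M(D)$, which is not excluded by connectivity of $M$ in any obvious way). Ruling this out requires a separate structural lemma --- roughly, that in a connected matroid two disjoint circuits force the existence of a strictly longer circuit, so that a longest circuit meets every other longest circuit --- which you neither state nor prove. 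Pointing to ``careful combinatorial accounting of how successive contractions distribute among connected components'' is not a substitute for that lemma, and without it the inductive step does not close.
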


Kardoš, Kráľ, Liebenau and Mach~\cite{KarKLM17} showed the following for contraction$^*$-depth:

\begin{proposition}
    \label{thm:csd-circuits}
    Let $M$ be a matroid and $c$ the size of its largest circuit. 
    Then 
    \begin{align*}
        \log_2 c \leq \csd(M) \leq c^2.
    \end{align*}
\end{proposition}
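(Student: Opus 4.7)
The plan is to prove the two bounds by separate arguments.

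For the lower bound $\log_2 c \leq \csd(M)$, the key step is a \emph{circuit-halving lemma}: if $M$ contains a circuit of size $c$, then for every vector $\ve{v}$ the matroid $M/\ve{v}$ still contains a circuit of size at least $\lceil c/2 \rceil$. I would fix a maximum circuit $C = \{\ve{c_1},\ldots,\ve{c_c}\}$ together with its unique (up to scalar) dependence $\sum_i \gamma_i \ve{c_i} = 0$, which necessarily has every $\gamma_i \neq 0$. If $\ve{v} \notin \Span(C)$, then $C$ remains a circuit of $M/\ve{v}$ of full size $c$: every proper subset $C' \subsetneq C$ has $\Span(C') \subseteq \Span(C) \not\ni \ve{v}$ and therefore stays independent. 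If instead $\ve{v} \in \Span(C)$, write $\ve{v} = \sum \beta_i \ve{c_i}$; then the family of all representations of $\ve{v}$ in $C$ is $\{\sum (\beta_i + \lambda \gamma_i)\ve{c_i} : \lambda \in \F\}$, and a short case analysis shows that the circuits of $M/\ve{v}$ contained in $C$ are exactly the sets $C \setminus L_j$, where $L_1, L_2, \ldots$ are the non-empty level sets of the map $i \mapsto -\beta_i/\gamma_i$. Since $\ve{v} \neq 0$, at least two distinct levels occur, so by averaging some $L_j$ has size at most $c/2$, producing a circuit of size at least $\lceil c/2 \rceil$ in $M/\ve{v}$. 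With the halving lemma in place, the lower bound follows by induction on $\csd(M)$: for connected $M$, choosing $\ve{v}$ that minimises $\csd(M/\ve{v})$ gives $\csd(M) = 1 + \csd(M/\ve{v}) \geq 1 + \log_2(c/2) = \log_2 c$; for disconnected $M$, the maximum circuit lies in a single (connected) component, to which the connected case applies.

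For the upper bound $\csd(M) \leq c^2$, the plan is to reduce to Theorem~\ref{thm:cd-circuits} via the observation $\csd(M) \leq \cd(M)$. Indeed, contracting by the column vector $\ve{c}_e$ of an element $e$ yields a matroid isomorphic to $M/e$ but with $e$ turned into a loop, and loops form singleton components of $\csd = 0$; they thus do not contribute to the recursively computed depth, so any element-contraction decomposition witnessing $\cd(M) = d$ translates into a vector-contraction decomposition witnessing $\csd(M) \leq d$. Combining this with $\cd(M) \leq c(c+1)/2 \leq c^2$ from Theorem~\ref{thm:cd-circuits} completes the proof.

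The main obstacle is the classification step inside the halving lemma, namely verifying that every minimal dependent subset of $C$ in $M/\ve{v}$ is of the form $C \setminus L_j$. This ultimately hinges on the fact that because $C$ is a circuit, its space of dependences is one-dimensional, which parametrises all representations of $\ve{v}$ in $C$ as the single-parameter family indexed by $\lambda$; the inclusion-minimal supports then correspond exactly to the non-empty level sets of $i \mapsto -\beta_i/\gamma_i$, which are pairwise incomparable and hence all minimal.
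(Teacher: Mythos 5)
The paper cites Proposition~\ref{thm:csd-circuits} from~\cite{KarKLM17} and does not reproduce a proof, so there is no in-paper argument to compare against; I can only assess your proposal on its own terms, and it is correct.

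Your circuit-halving lemma is sound. Fixing a maximum circuit $C=\{\ve{c_1},\ldots,\ve{c_c}\}$ with its (one-dimensional) dependence $\sum_i\gamma_i\ve{c_i}=\ve{0}$, $\gamma_i\neq 0$: if $\ve{v}\notin\Span(C)$ then $C$ survives as a circuit of $M/\ve{v}$ of size $c$ (any nontrivial relation $\sum\alpha_i\ve{c_i}=\beta\ve{v}$ on a proper subset would force $\ve{v}\in\Span(C)$ when $\beta\neq 0$, and contradict minimality of $C$ when $\beta=0$); if $\ve{v}=\sum\beta_i\ve{c_i}\in\Span(C)\setminus\{\ve{0}\}$ then the coefficient vectors representing $\ve{v}$ form the one-parameter family $(\beta_i+\lambda\gamma_i)_i$, the minimal dependent subsets of $C$ in $M/\ve{v}$ are exactly the complements of the (pairwise disjoint) level sets $L_\lambda=\{i:\lambda=-\beta_i/\gamma_i\}$, and since $\ve{v}\neq\ve{0}$ at least two levels are nonempty, so some $|L_\lambda|\leq c/2$ and $|C\setminus L_\lambda|\geq\lceil c/2\rceil$. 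A couple of bookkeeping points worth making explicit in a final write-up: $\ve{v}=\ve{0}$ never attains the minimum in $1+\min_{\ve v}\csd(M/\ve v)$, so one may restrict to $\ve{v}\neq\ve{0}$; and the induction is cleanest if run on $\csd(M)$, with the disconnected case handled by passing to the connected component $M'$ containing the maximum circuit (here $\csd(M')$ may equal $\csd(M)$, so the strict decrease needed for the induction hypothesis comes from the further step $M'\mapsto M'/\ve v$, not from passing to $M'$ itself). The upper bound is the direct route through $\csd(M)\leq\cd(M)$ — which is the same observation the paper makes at the start of its proof of Theorem~\ref{thm:cd_csd_fequivalent} — followed by Theorem~\ref{thm:cd-circuits} and $c(c+1)/2\leq c^2$ for $c\geq 1$; that is fine.
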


As an easy corollary of the previous two results, we see that contraction-depth and contraction$^*$-depth are functionally equivalent. This was already observed in~\cite{DeVKO20} without a proof, so we provide a proof for completeness.
\begin{theorem}
\label{thm:cd_csd_fequivalent}
    There exists a function $f$ such that $\csd(M) \leq \cd(M) \leq f(\csd(M))$ for any matroid $M$.
\end{theorem}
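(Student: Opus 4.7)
The plan is to prove the two inequalities separately, since they have quite different flavours. The lower bound $\csd(M) \leq \cd(M)$ should follow by a direct structural comparison of the two recursive definitions. The upper bound $\cd(M) \leq f(\csd(M))$ should be extracted ``for free'' from the already-stated bounds in Theorem~\ref{thm:cd-circuits} and Proposition~\ref{thm:csd-circuits}, by routing through the length $c$ of the longest circuit of $M$.

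For the first inequality, I would proceed by induction on $|M|$ (for represented matroids, since $\csd$ is only defined in that setting). The base cases are: a matroid of rank $0$ has $\csd=0 \leq 1 \leq \cd(M)$, and a single non-loop element has $\csd=\cd=1$. For the inductive step, assume $M$ is connected and has rank at least one. Given an element $e\in M$ with column vector $\vev_e$ in the representation, I would observe that $M/\vev_e$ is precisely $M/e$ with $e$ retained as a loop. Since a loop is its own component of rank $0$ and therefore contributes $0$ to $\csd$, the max-over-components definition yields $\csd(M/\vev_e) = \csd(M/e)$. Thus
\[
\csd(M) \;=\; 1 + \min_{\vev} \csd(M/\vev) \;\leq\; 1 + \min_{e \in M} \csd(M/\vev_e) \;=\; 1 + \min_{e \in M} \csd(M/e) \;\leq\; 1 + \min_{e \in M} \cd(M/e) \;=\; \cd(M),
\]
where the last inequality uses the inductive hypothesis on the smaller matroids $M/e$ (which remain representable). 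If $M$ is disconnected, both parameters take the maximum over components, and the inequality follows componentwise.

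For the second inequality, let $d = \csd(M)$ and let $c$ be the length of the longest circuit in $M$ (setting $c=1$ if $M$ has no circuit). By Proposition~\ref{thm:csd-circuits} we have $\log_2 c \leq d$, whence $c \leq 2^d$. Plugging this into the upper bound of Theorem~\ref{thm:cd-circuits} gives
\[
\cd(M) \;\leq\; \tfrac{c(c+1)}{2} \;\leq\; \tfrac{2^d(2^d+1)}{2} \;\leq\; 2^{2d}.
\]
Thus setting $f(d) := 2^{2d}$ (or $\max(1, 2^{2d})$ to cover $d=0$, where $M$ is a disjoint union of loops and $\cd(M) \leq 1$) is sufficient.

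The only real bookkeeping issue is the first inequality: one must be careful that contracting by the span of a column vector $\vev_e$ does \emph{not} delete $e$ from the ground set (unlike the matroid-theoretic contraction $M/e$), so that $e$ survives as a loop in $M/\vev_e$. Once one checks that this loop forms its own component of $\csd$-value zero and therefore does not influence the max-over-components, the induction goes through smoothly. The second inequality is essentially immediate from the two circuit-length bounds already stated in the paper, so no additional work is needed there.
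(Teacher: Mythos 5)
Your proof is correct and follows essentially the same route as the paper: the paper also dispatches the first inequality as immediate from the definitions (you simply spell out the induction that the paper leaves implicit, including the loop-becomes-its-own-component subtlety), and proves the second inequality by the same chain $\log_2 c \leq \csd(M)$ from Proposition~\ref{thm:csd-circuits} followed by $\cd(M) \leq c(c+1)/2$ from Theorem~\ref{thm:cd-circuits}.
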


\begin{proof}
    The first inequality is easily seen from the definitions of contraction-depth and contraction$^*$-depth.
    For the second inequality, let $M$ be a matroid of contraction$^*$-depth $\csd(M) = d$, and let $c$
    denote the length of its largest circuit. 
    By Theorem~\ref{thm:csd-circuits}, $\log_2 c \leq k$, and
    hence $c \leq 2^d$. By Theorem ~\ref{thm:cd-circuits},
    we have $\cd(M) \leq \frac{1}{2}(c(c + 1))$, and by plugging in the previous inequality, we obtain that
    $\cd(M) \leq \frac{1}{2} (2^d (2^d + 1)) = \frac{1}{2} (2^{2\csd(M)} + 2^{\csd(M)})$.
\end{proof}

Finally, to extend some of our results to $\Q$-representable matroids we will need the following lemma.
\begin{lemma}[\cite{ChaCKKP19}]
\label{lem:rational_to_finite}
    There exists a function $h: \N \times \N \to \N$ such that for every matroid $M$ represented over $\mathbb{Q}$ with entry complexity $e$ can be represented over $\F_q$ with $q \le h(e, \csd(M))$.
\end{lemma}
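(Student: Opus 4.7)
The plan is to leverage the bound on circuit sizes from Proposition~\ref{thm:csd-circuits}: since $\csd(M) = d$, every circuit of $M$ has size at most $2^d$. Consequently, the matroid structure of $M$ is completely determined by the rank function restricted to subsets of at most $2^d$ columns, because a set $S$ is independent if and only if none of its subsets of size at most $2^d$ forms a circuit, and membership in a circuit of size at most $2^d$ is itself determined by the rank data of small subsets. It therefore suffices to find a prime $q$ such that reducing the given rational representation $A$ of $M$ modulo $q$ is well defined (no denominator of an entry vanishes) and preserves the rank of every column subset of size at most $2^d$; equivalently, such that every $r \times r$ subdeterminant of $A$ with $r \leq 2^d$ that is nonzero over $\Q$ remains nonzero modulo $q$.

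Next, I would bound the set of \emph{bad primes}, meaning primes for which one of these conditions fails. Since every entry of $A$ has entry complexity at most $e$, the set of distinct rational values appearing in $A$ has size at most $2^{O(e)}$. Hence the number of distinct $r\times r$ arrays of such entries is at most $2^{O(2^{2d}\cdot e)}$, which also bounds the number of distinct values taken by the $r\times r$ minors of $A$ for $r \leq 2^d$. Writing any such nonzero minor in lowest terms as $N/M$, a direct application of the Leibniz formula combined with clearing denominators bounds both $|N|$ and $|M|$ by $2^{O(2^{2d}(e+d))}$, so each contributes at most $O(2^{2d}(e+d))$ prime divisors to the bad set. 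Together with the at most $2^{O(e)}\cdot e$ primes dividing denominators of distinct entries of $A$, the total number of bad primes is bounded by some function $B(e,d)$.

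Finally, by Bertrand's postulate applied iteratively, the $(B(e,d)+1)$-st prime is at most $h(e,d) := 2^{O(B(e,d))}$, so there exists a prime $q \leq h(e,d)$ outside the bad set. Reducing each entry of $A$ modulo such a $q$ then yields a well-defined matrix $A_q$ over $\F_q$, and every $r \times r$ minor of $A$ with $r \leq 2^d$ is nonzero over $\Q$ if and only if the corresponding minor of $A_q$ is nonzero in $\F_q$. Consequently $A_q$ induces the same independence structure on the columns, so $M$ is representable over $\F_q$ as required.

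The main obstacle is formalizing the counting step cleanly: one must carefully argue that bounded entry complexity yields a bounded repertoire of rationals appearing in $A$, which in turn bounds the number of distinct $r\times r$ arrays (for $r \leq 2^d$) up to entry-equality and hence the number of distinct minor values. Once this counting is in place, the Hadamard-type size bounds on individual determinants and the density-of-primes argument used to produce $q$ are routine.
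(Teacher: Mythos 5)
Your plan fails because you reduce the \emph{given} matrix $A$ modulo $q$ while only safeguarding minors of size at most $2^d$. Reduction modulo $q$ can only create new dependencies, and a new dependency corresponds to a circuit of $M(A_q)$, \emph{not} of $M$; your circuit bound of $2^d$ applies to $M$ because $\csd(M)=d$, but it says nothing about $\csd(M(A_q))$, which can be much larger. Hence the step ``consequently $A_q$ induces the same independence structure'' does not follow from agreement on small minors. Concretely, let $A_n := J_n + I_n$, where $J_n$ is the $n\times n$ all-ones matrix. Its entries lie in $\{1,2\}$ (so the entry complexity is a fixed constant), $\det A_n = n+1 \neq 0$, so $M(A_n)$ is the free matroid, $\csd(M(A_n))=1$, and $2^d=2$. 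Every $1\times 1$ or $2\times 2$ minor of $A_n$ lies in $\{0,\pm 1,2,\pm 3\}$, so the ``bad prime'' set your argument produces is $\{2,3\}$, independently of $n$, and your procedure would output $q=5$. But $\det A_4 = 5 \equiv 0 \pmod 5$, so the columns of $(A_4)_5$ are linearly dependent over $\F_5$ and $M((A_4)_5)\neq M(A_4)$. The lemma itself is of course still true here --- the free matroid is represented by $I_4$ over $\F_5$ --- but not via reduction of $A_4$.

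The missing idea is to first pass to a row-equivalent matrix $A'$ of dual tree-depth at most $d$, as provided by Corollary~\ref{cor:dual_td}. Reducing modulo $q$ only shrinks column supports, so $A'_q$ still has dual tree-depth at most $d$; then \emph{both} $M(A')$ and $M(A'_q)$ have all circuits of size at most $2^d$ (by Proposition~\ref{thm:csd-circuits}, since $\csd$ is bounded by dual tree-depth), and only \emph{now} does preserving small minors genuinely suffice. This, however, creates a new obligation that your write-up never addresses: row reduction to dual tree-depth $d$ can a priori blow up the entry complexity of $A'$, so one must additionally show that a dual-tree-depth-$d$ representative with entry complexity bounded in terms of $e$ and $d$ exists. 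That is the substantive content of the cited result and cannot be recovered by counting bad primes for the original $A$.
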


\subsection{Logic}

We will use Monadic Second Order logic with modulo counting (CMSO) in some of our proofs. We will specialize our treatment of logic to the two domains that will be relevant for us -- rooted labelled trees and matrices.

We model rooted labelled trees that use labels from a finite set $S$ as structures over the signature consisting of a binary predicate symbol $par$ and for each $a \in S$ a unary predicate symbol $L_a(x)$. These predicates have the natural meaning -- $par(x,y)$ says that $x$ is a parent of $y$ and $L_a(x)$ means that node $x$ is labelled with label $a$. For example, we can
express that $x$ is a leaf by $\lnot(\exists y. par(x,y))$ and 
express `Every leaf has label $a$' by the sentence $\forall x.(leaf(x)\rightarrow L_a(x))$.

% We model matrices $\F$ as structures over the signature that consists of two unary predicates $C$ and $R$, and for each $\alpha \in \F$ a binary predicate $Entry_\alpha$. The intended meaning of these symbols is that $C(x)$ is true if $x$ is a column of $A$, $R(x)$ is true if $x$ is a row of $A$, and for a row $x$ and column $y$, $Entry_\alpha(x,y)$ is true if $A(x,y) = \alpha$.
% For example, we can express the statement `There exist a row that contains only $\alpha$' (where $\alpha \in \F$) by the sentence $\exists x. (R(x) \land \forall y. (C(y) \rightarrow Entry_\alpha(x,y)))$.

\begin{figure}[H]
    \centering
    \begin{minipage}{0.40\textwidth}

    \begin{align*}
        \begin{bNiceMatrix}[first-col,first-row]
        & \phantom{-} c_1 & c_2 & c_3 & c_4 \phantom{-} \\
        r_1 & \phantom{-} 1 & 1 & 2 & 0 \phantom{-}\\
        r_2 & \phantom{-} 2 & 0 & 1 & 2 \phantom{-}\\
        r_3 & \phantom{-} 1 & 2 & 0 & 1 \phantom{-}\\
        \end{bNiceMatrix}
    \end{align*}

    \end{minipage}
    \begin{minipage}{0.50\textwidth}

    \begin{align*}
        \mathcal{S}(A) &= \{r_1, r_2, r_3, c_1, c_2, c_3, c_4\}\\
        R^{\mathcal{S}(A)} &= \{r_1, r_2, r_3\}\\
        C^{\mathcal{S}(A)} &= \{c_1, c_2, c_3, c_4\}\\
        Entry^{\mathcal{S}(A)}_0 &= \{(r_1, c_4), (r_2, c_2), (r_3, c_3)\}\\
        Entry^{\mathcal{S}(A)}_1 &= \{(r_1, c_1), (r_1, c_2), (r_2, c_3), (r_3, c_1), (r_3, c_4)\}\\
        Entry^{\mathcal{S}(A)}_2 &= \{(r_1, c_3), (r_2, c_1), (r_2, c_4), (r_3, c_2)\}
    \end{align*}

    \end{minipage}
    \caption{An example of a matrix $A$ and its associated structure $\mathcal{S}(A)$.}
    \label{fig:structure_example}
\end{figure}

For a finite field $\F$ we model matrices with entries from $\F$ as structures over the signature $\Sigma_\F = \{R,C\} \cup \{Entry_\alpha\}_{\alpha \in \F}$ where $R$ and $C$ are unary predicate symbols and each $Entry_{\alpha}$ is a binary predicate symbol.
To any matrix $A$ over $\F$, we associate a  $\Sigma_\F$-structure $\S(A)$ as follows: The universe if $\S(A)$ is $\rows(A) \cup \columns(A)$. Unary predicate $R$ is realized as $R^{\S(A)} = \rows(A)$ and analogously $C^{\S(A)} = \columns(A)$.
To define the realization of $Entry_\alpha$, for a row $r$ and a column $c$ we denote by $A(r,c)$ the entry of $A$ in row $r$ and column $c$.
%we think of the matrix $A$ as a function $rows(A)\times columns(A) \to \F$ which to a row $r$ and column $c$ assigns value $\alpha$ that is 
Then, for each $\alpha \in \F$ we define 
$$ Entry^{\S(A)}_\alpha = \{ (r,c)~|~r \in \rows(A), c \in \columns(A), A(r,c) = \alpha \} $$

For example, if we want to say that every row of $A$ contains a specific element $\alpha \in \F$, we can express it as follows: $\forall x (R(x) \rightarrow \exists y(C(y) \land Entry_\alpha(x,y))$. To shorten and unclutter our formulas, we will introduce row and column quantifiers: $\forall_R x$ means `for each row $x$' and $\exists_R x$ means `there exists a row $x$'. For columns the quantifiers $\forall_C$ and $\exists_C$ are defined analogously. The example formula given above can then be expressed as $\forall_R x \exists_C y. Entry_\alpha(x,y)$. %We note that row/column quantifiers are just syntactic shorthands

The examples of formulas given above were formulated in the first-order logic (FO), meaning that the quantification was over the elements of the universe in question (nodes of a tree or rows/columns of a matrix). In MSO logic we can also quantify over subsets of the universe. In this setting, we use capital letters $X,Y,\ldots$ for set variables, and we use predicate $x \in X$ to denote that $x$ belongs to set $X$. For example, the sentence $\exists X. \forall x.((leaf(x) \rightarrow x \in X)\land (root(x) \rightarrow x \not\in X)) \land  (\forall x \forall y (par(x,y) \rightarrow (x \in X \leftrightarrow y \not\in X))$ expresses that all leaves are at odd distance from the root.  
%\todo{ Maybe find a nice example formula for matrices instead} 
For formulas in the language of matrices, we can consider set quantifiers that speak of sets of rows or columns only, so for example $\forall_C X$ reads as `for all sets of rows' and the quantifiers $\exists_C X$, $\forall_R X$, $\exists_R X$ are defined analogously. We note that these quantifiers are only syntactic shorthands, and they can be replaced by the standard set quantifiers. For example, the formula $\forall X_C. \phi(X)$ can be expressed as $\forall X. (\forall x. (x \in X \rightarrow C(x))) \rightarrow \phi(X))$.

Finally, in the CMSO logic we can also use predicates of the form  $mod_{a,b}(X)$; the semantics of this predicate is that $|X| = a~(\text{mod } b)$.

An \emph{interpretation} of matrices in trees is a tuple $I = (\nu(x), \rho_C(x), \rho_R(x), \{\psi_\alpha(x,y)\}_{\alpha \in \F})$ of formulas over the signature of labelled trees. To labelled tree $T$ the interpretation $I$ assigns a matrix structure $I(T)$ as follows: The universe of $I(T)$ is the set $\{v\in V(T)~|~T \models \nu(v)\}$. The realization of predicate $R$ in $I(T)$ is given by $\{v\in V(T)~|~T \models \rho_R(v)\}$, and the realization of predicate $C$ is defined analogously. Finally, for each $\alpha \in \F$ the predicate $Entry_\alpha$ is realized as $\{(u,v)~|~T \models \psi_{\alpha}(u,v)\}$.

To a sentence $\varphi$ in the language of matrices, the interpretation $I$ assigns a new sentence $I(\varphi)$ in the language of trees as follows. Each occurrence of $R(x)$ and $C(x)$ is replaced by $\rho_R(x)$ and $\rho_C(x)$, respectively, and each occurrence of $Entry_\alpha(x,y)$ is replaced by $\psi_\alpha(x,y)$. Moreover, all quantifications are relativized to elements $x$ that satisfy $\nu(x)$.

We will need the following lemma, proof of which (in the case of general interpretations) can be found for example in~\cite{Hod97}.
\begin{lemma}
\label{lem:interp}
    Let $\phi$ be a sentence over the language of matrices and let $I$ be an interpretation of matrices in trees. Then for every tree $T$ we have 
    $I(T) \models \phi \Longleftrightarrow T \models I(\phi)$.
\end{lemma}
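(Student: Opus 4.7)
The plan is to prove the equivalence by structural induction on the formula $\phi$. This is the standard proof of the interpretation lemma, adapted to the specific syntactic form of interpretations used here. At each step, I will compare how an atomic or compound subformula is evaluated in $I(T)$ with how its translation is evaluated in $T$, and verify that the two match by unfolding the definition of $I$.

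For the base case, I would consider the atomic formulas of the signature of matrices: $R(x)$, $C(x)$, $Entry_\alpha(x,y)$ for each $\alpha \in \F$, and equality $x=y$. For each of these, the translation is built in directly: $I$ replaces $R(x)$ by $\rho_R(x)$, $C(x)$ by $\rho_C(x)$, and $Entry_\alpha(x,y)$ by $\psi_\alpha(x,y)$. The semantic claim then follows immediately from the defining clauses of $I(T)$, namely $R^{I(T)} = \{v : T \models \rho_R(v)\}$, and analogously for $C^{I(T)}$ and $Entry_\alpha^{I(T)}$. Equality is handled trivially since the universe of $I(T)$ is a subset of $V(T)$ and the two logics share it.

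The inductive step for Boolean connectives $\neg$, $\land$, $\lor$, $\rightarrow$ is immediate from the induction hypothesis, since $I$ commutes with Boolean connectives by definition. The substantive case is quantification. For a first-order quantifier $\exists x. \psi$, the interpretation $I$ produces $\exists x. (\nu(x) \land I(\psi))$. Evaluating on $I(T)$, the witness $v$ must lie in the universe of $I(T)$, which is exactly $\{v \in V(T) : T \models \nu(v)\}$, and by induction $I(T) \models \psi[v/x]$ iff $T \models I(\psi)[v/x]$; combining these gives the required equivalence. The universal quantifier is dual. For a monadic second-order quantifier $\exists X. \psi$, an analogous argument works once one observes that subsets of the universe of $I(T)$ correspond exactly to subsets $X \subseteq V(T)$ satisfying $\forall x (x \in X \rightarrow \nu(x))$, so the relativization of $X$ to $\nu$ captures the correct range of the quantifier. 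The shorthand quantifiers $\forall_R, \exists_R, \forall_C, \exists_C$ (and their set versions) are simply abbreviations and reduce to the above via relativizations to $R(x)$ or $C(x)$, which are themselves translated by $\rho_R$ and $\rho_C$.

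The only point that requires a brief additional remark is the CMSO modulo-counting atom $mod_{a,b}(X)$, since its truth depends on cardinalities. Here the key observation is that the universe of $I(T)$ is exactly $\{v : T \models \nu(v)\}$, and the set $X$ ranges over subsets of this universe in both the source and the target; hence $|X|$ computed in $I(T)$ equals $|X|$ computed in $T$, and the modular condition is preserved verbatim. The potential obstacle, namely that counting could behave differently across the interpretation, does not arise because $I$ does not quotient or duplicate elements but merely carves out a definable subset of $V(T)$. With all cases handled, structural induction concludes the proof.
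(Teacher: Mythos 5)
Your proof is correct; it is the standard structural-induction argument for the interpretation (or transfer) lemma, and you have correctly singled out the two points that need care beyond a routine first-order interpretation: the relativization of set quantifiers to subsets of the defined universe, and the preservation of the CMSO modular-counting atoms, which is indeed harmless here because $I$ carves out a definable subset of $V(T)$ rather than forming a quotient, so cardinalities of subsets are unchanged. The paper does not prove this lemma itself but defers to~\cite{Hod97}; your argument is the one such a reference gives (strengthened in the induction to formulas with free variables and an assignment, as your use of $\psi[v/x]$ implicitly acknowledges), adapted from FO/MSO to CMSO.
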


%To each matrix $A$ over $\F$ we associate a structure 

% Let $\Sigma$ be a relational signature. In CMSO logic we have two types of variables -- the element variables, denoted by $x,y,\ldots$ and set variables, denoted by $X,Y,\ldots$. The atomic formulas over $\Sigma$ are $x=y$, $x\in X$, $R(x_1,\ldots, x_{ar(R)})$, $mod_{a,b}(X)$. The semantics of $mod_{a,b}(X)$ is that set $X$ has $a$ modulo $b$ elements.

% Let $\Sigma$ and $\Gamma$ be relational signatures.
% An \emph{interpretation from $\Sigma$ to $\Gamma$} is a tuple $I = (\nu, \{\eta_R\}_{R \in \Gamma})$ of $\Sigma$-formulas, where $\nu$ has one free variable and the number of free variables of $\eta_R$ is equal to the arity of $R$.

\section{Expressing contraction$^*$-depth in CMSO}

In this section, we show that in CMSO it is possible to express that a matrix $A$ represents a matroid of contraction$^*$-depth at most $d$. This result will be crucial in the proof of Theorem~\ref{thm:main1}.

\begin{theorem}
\label{thm:csd_CMSO}
    For every $d$ there exists a CMSO sentence $csd_d$ such that for every matrix $A$ over $\F$ we have $\S_A \models csd_d$ if and only if $\csd(M(A)) \le d$.
\end{theorem}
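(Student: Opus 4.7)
My plan is to encode Definition~\ref{def:csd} directly in CMSO over the matrix structure $\S_A$. The key auxiliary device is a family of CMSO formulas that perform linear algebra on columns of $A$ using row-vectors in $\F^h$ presented as partitions of rows. Concretely, a vector $\ve{v}\in\F^h$ is encoded by a partition $(R_\alpha)_{\alpha\in\F}$ of the rows with the meaning $v_r=\alpha\Leftrightarrow r\in R_\alpha$, and a coefficient assignment on a column set $S$ is encoded by a partition $(S_\alpha)_{\alpha\in\F}$ of $S$.

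The linchpin is, for each fixed $j$, a CMSO formula $\mathsf{InSpan}_j$ expressing $\sum_{c\in S}\alpha_c\ve{c}\in\Span(\ve{v}_1,\ldots,\ve{v}_j)$. For every row $r$, the left-hand side evaluates in $\F_q$ to $\sum_{\alpha,\beta\in\F}\alpha\beta\cdot(N_{r,\alpha,\beta}\bmod p)$, where $N_{r,\alpha,\beta}:=|\{c\in S_\alpha:Entry_\beta(r,c)\}|$ and $p=\mathrm{char}(\F)$. There are only finitely many residue profiles $(n_{\alpha,\beta})\in\{0,\ldots,p-1\}^{|\F|\times|\F|}$ and only $q^j$ choices of global scalars $\gamma_1,\ldots,\gamma_j\in\F$, so the required identity $\sum_{\alpha,\beta}\alpha\beta n_{\alpha,\beta}=\sum_i\gamma_iv_i(r)$ becomes a finite disjunction of conjunctions; each count $N_{r,\alpha,\beta}\bmod p$ is certified using an auxiliary set variable together with the CMSO predicate $mod_{n,p}$. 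First-order universal quantification over $r$ applies the equality to all rows simultaneously, while a single outer existential disjunction fixes $\gamma_1,\ldots,\gamma_j$ globally. From $\mathsf{InSpan}$ I derive CMSO predicates for: a column being a loop of $M[X]/\Span(\ve{v}_1,\ldots,\ve{v}_j)$ (equivalently, that column being in the span); dependence of a column subset in this contracted matroid; ``$c,c'$ lie in a common circuit''; and, via standard MSO transitive closure over the common-circuit relation, ``$X'$ is a connected component of $M[X]/\Span(\ve{v}_1,\ldots,\ve{v}_j)$''.

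With these primitives in hand, I define by induction on $d$ a family of CMSO formulas $\Phi_d(X;V_1,\ldots,V_j)$ asserting $\csd(M[X]/\Span(\ve{v}_1,\ldots,\ve{v}_j))\le d$. The base case $\Phi_0(X;V_1,\ldots,V_j)$ states that every column of $X$ lies in $\Span(\ve{v}_1,\ldots,\ve{v}_j)$, i.e.\ the quotient matroid has rank $0$. For $d\ge 1$, $\Phi_d$ states: for every connected component $X'$ of $M[X]/\Span(\ve{v}_1,\ldots,\ve{v}_j)$, there exists a row-partition $V_{j+1}$ for which $\Phi_{d-1}(X';V_1,\ldots,V_{j+1})$ holds. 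The desired sentence $csd_d$ is obtained by instantiating $\Phi_d$ with $X$ ranging over all columns and no initial contraction vectors. Correctness is a routine induction on $d$ matching Definition~\ref{def:csd}: the universal over components uniformly subsumes its disconnected and connected branches, the existential captures the choice of contracting vector (with $V_{j+1}$ the zero vector handling components that are already rank $0$ in the current quotient). The recursion has depth $d$ and introduces at most $dq$ set parameters, so $csd_d$ is a fixed CMSO sentence whose size is bounded by a function of $d$ and $|\F|$.

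The main technical obstacle I anticipate is the clean implementation of $\mathsf{InSpan}$: faithfully simulating $\F_q$-arithmetic with only set cardinalities modulo the characteristic $p$ and finite disjunctions over residue profiles, while keeping the scalars $\gamma_1,\ldots,\gamma_j$ quantified once globally rather than per row. Once this is in place, the remaining ingredients --- connectedness by transitive closure, iteration over components via universal quantification, and unfolding the bounded-depth recursion in the meta-variable $d$ --- are standard CMSO.
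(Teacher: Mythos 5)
Your proposal is correct and follows essentially the same route as the paper: you encode arbitrary vectors $\ve{v}\in\F^h$ as $\F$-indexed partitions of the rows (the paper calls these ``virtual columns''), use the $mod_{a,b}$ predicate on counting residue profiles to simulate $\F$-linear arithmetic, derive dependence/circuit/component predicates from this, and then build the depth-$d$ sentence by recursion on $d$. The paper factors the linear-algebra layer into two lemmas (a $LinCom$ formula deciding membership of a virtual column in the span of others, and a $Dep$ formula deciding dependence of a column set relative to a virtual column), whereas you fuse both into a single $\mathsf{InSpan}_j$; this is a presentational difference only. One small improvement in your version: your recursion step says ``for every connected component $X'$ of the current quotient there exists a next contraction vector making $\Phi_{d-1}$ hold,'' so the recursion strictly decreases $d$ at every unfolding and trivially terminates as a syntactic construction. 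The paper's displayed $csd_d(Z,\ldots)$ formula instead branches on whether $Z$ is connected and, in the disconnected branch, appears to recurse on $csd_d$ and $csd_i$ ($i\le d$) applied to components, which as written is a circular syntactic recursion (it only ``terminates'' semantically because components are connected); your collapsed formulation avoids this. Your note that taking $V_{j+1}$ to be the zero vector absorbs the rank-$0$ base case into the inductive step is also correct and matches the paper's intent.
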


To prove Theorem~\ref{thm:csd_CMSO} we need to overcome several difficulties:
\begin{itemize}
    \item The definition of contraction$^*$-depth is phrased in terms of matroids, but the sentence $csd_d$ from Theorem~\ref{thm:csd_CMSO} has to work on a matrix (on a matrix structure, to be more precise). We will therefore rephrase Definition~\ref{def:csd} in terms of matrices and quotient spaces (see Section~\ref{sec:matrices}). 
    \item Let $A$ be a matrix over $\F$ and let $m$ be the number of rows of $A$. The definition of contraction$^*$-depth uses an operation where we pick an arbitrary vector $\vev$ in $\FF^m$ and we quotient the vector space we are currently working with by $\vev$. Expressing this in CMSO would be easy if $\vev$ was one of the columns of $A$, but in general we cannot assume this, and so our formula $csd_d$ has to be able to quantify over all vectors from $\FF^m$. We resolve this by introducing \emph{virtual columns} (Section~\ref{sec:virtual_columns}).
    \item In our formula $csd_d$, we need to be able to check whether a set $X$ of columns of $A$ is linearly (in)dependent over $\F$ (Section~\ref{sec:dep}). To do this, our formula will have to be able to speak about sums of arbitrarily large number of entries that are in the same row of $A$. We will now illustrate how we can address this on a simpler problem -- checking whether all entries of $A$ in a given row $r$ of $A$ sum to $0$.  To check this, we can proceed as follows. Let $p:=|\F|$ and let $\alpha_1,\ldots, \alpha_p$ be an enumeration of elements of $\F$. We partition the columns of $A$ into sets $X_1,\ldots, X_p$ by putting $c$ into $X_i$ if $A(r,c) = \alpha_i$. For each $i \in [p]$ set $k_i := |X_i|$. Then our problem reduces to checking whether $k_1\alpha_1 + \ldots + k_p \alpha_p =0$. While each $k_i$ can be arbitrarily large, we know that for each $\alpha \in \F$ we have $p\alpha = 0$ in $\F$, and so $k_i \alpha_i = r_i \alpha_i$, where $r_i$ is the reminder of $k_i/p$, i.e. $r_i = k_i (\text{mod } p)$. Thus, we only need to check whether $r_1\alpha_1 + \ldots + r_p \alpha_p =0$. This is where we use the modulo counting feature of CMSO: we can use predicates of the form $mod_{r_i,p}(X_i)$ to determine $r_i$.
    Since for us the value $p$ is a fixed constant (our formula is allowed to depend on $\F$) and for each $i$ we have $r_i < p$, the formula that checks whether the entries in row $r$ sum to $0$ can be written as
    $$ ZeroSum(x):=\exists X_1 \ldots \exists X_p~\forall y \left( \bigwedge_{i=1}^p (c \in X_i \leftrightarrow Entry_{\alpha_i}(x,y))\right) \land $$
    $$ \land \bigvee_{\substack{r_1,\ldots,r_p \in \F \\ r_1\alpha_1 + \ldots + r_p\alpha_p = 0}} \left( \bigwedge_{i=1}^p mod_{r_i,p}(X_i) \right). $$
\end{itemize}

\subsection{Rephrasing the definition of contraction$^*$-depth }
\label{sec:matrices}

To be able to speak in CMSO about contraction$^*$-depth of a matrix, it will be convenient for us to rephrase the definition contraction$^*$-depth explicitly in terms of quotient spaces.

%we will will need a definition of contraction$^*$-depth given purely in terms of a matrix.

Let $A$ be a matrix $A$ and let $\ve {v_1},\ldots, \ve {v_k}$ be vectors from $\F^m$, where $m$ is the number of rows of $A$. 
We say that a set $X = \{\ve{u_1},\ldots, \ve{u_\ell}\}$ of columns of $A$ (where we treat them as vectors from $\F^m$) is \emph{dependent with respect to $\ve {v_1},\ldots, \ve {v_k}$} if there exist coefficients $\alpha_1, \ldots, \alpha_\ell \in \F$, not all $0$, such that $\alpha_1  \ve{u_1} + \ldots + \alpha_\ell \ \ve{u_\ell} \in \Span(\ve {v_1},\ldots, \ve {v_k})$. In other words, $X$ is dependent with respect to $\ve {v_1},\ldots, \ve {v_k}$ if it is dependent in $\F^m/\Span(\ve {v_1},\ldots, \ve {v_k})$.

Based on $A$ and $\ve {v_1},\ldots, \ve {v_k}$ we define matroid $M(A,\{\ve {v_1},\ldots, \ve {v_k}\})$ as follows. The ground set of $M(A,\{\ve {v_1},\ldots, \ve {v_k}\})$ is the set of all columns of $A$, and we say that a subset $X = \{u_1,\ldots, u_s\}$ of $\columns(A)$ is dependent in $M(A,\{\ve {v_1},\ldots, \ve {v_k} \})$ if it is dependent with respect to $\ve {v_1},\ldots, \ve {v_k}$.

The contraction$^*$-depth $\csd(A, \{\ve {v_1},\ldots, \ve {v_k}\})$ of a matrix $A$ and set $\{\ve {v_1},\ldots, \ve {v_k}\}$ of columns is defined recursively as:
\begin{itemize}
    \item If $M(A,\{\ve {v_1},\ldots, \ve {v_k}\})$ has rank 0, then $\csd(A,\{\ve {v_1},\ldots, \ve {v_k}\}) = 0$.
    \item If $M(A,\{\ve {v_1},\ldots, \ve {v_k}\})$ is not connected, then the contraction$^*$-depth of $A,\{\ve {v_1},\ldots, \ve {v_k}\}$ is the maximum contraction$^*$-depth of any $A',\{\ve {v_1},\ldots, \ve {v_k}\}$ where $A'$ is a submatrix of $A$ formed by the elements of a connected component of $M(A,\{\ve {v_1},\ldots, \ve {v_k}\})$.
    \item If $M$ is connected, then $\csd(A,\{\ve {v_1},\ldots, \ve {v_k}\}) = 1 + \min \csd(A,\{\ve {v_1},\ldots, \ve {v_k}, \ve {v_{k+1}}\})$, where the minimum is taken over all vectors $\ve {v_{k+1}}$ from $\F^m$.
\end{itemize}
It is easily verified that for a matrix $A$ we have $\csd(M(A)) = \csd(A,\emptyset)$.

\subsection{Virtual columns}
\label{sec:virtual_columns}

Let $A$ be a matrix and $\S_A$ the corresponding matrix structure. A \emph{virtual column} of $\S(A)$ is an
$\F$-indexed family $\{Q_\alpha\}_{\alpha \in \F}$ of subsets of rows of $\S_A$ such that $\bigcup_{\alpha \in \F} Q_\alpha = R$ (recall that $R$ is the set of elements  of $\S_A$ that correspond to rows of $A$) and $Q_\alpha \cap Q_\beta = \emptyset$ if $\alpha \not= \beta$. To simplify the notation, we will denote virtual columns by $\langle Q \rangle$ instead of $\{Q_\alpha\}_{\alpha \in \F}$.

Let $R = \{r_1,\ldots, r_m\}$ be the set of rows of $A$. To every virtual column $\vc{Q}$  we assign a vector $\vev(\vc{Q}) = (v_1,\ldots, v_m)^T \in \F^m$ by setting $v_i := \alpha$ if $r_i \in Q_\alpha$.

Since every virtual column corresponds to a finite family of subsets of rows of $\S_A$, we can work with virtual columns in CMSO -- they correspond to $\F$-indexed families of set variables. Again, to simplify the notation, we will write $\langle X \rangle$ instead of $\{X_\alpha\}_{\alpha \in \F}$.
Note that we can quantify over virtual columns --
for example, $\exists \vc{X}$ can be implemented as $\exists_C X_{\alpha_1} \ldots \exists_C X_{\alpha_p}$, where $\alpha_1,\ldots, 
\alpha_p$ is an enumeration of elements of $\F$. Universal quantification is handled analogously. 
We will need the following lemma.

\begin{lemma}
\label{lem:LinCom}
For every $k$, there exists a formula $LinCom(\la Z \ra,\la Y^1 \ra ,\ldots, \la Y^k \ra)$ such that for any matrix structure $S(A)$ and any virtual columns $\la L \ra, \la J^1 \ra,\ldots, \la J^k \ra $ we have that 
$\S(A) \models LinCom(\vc{L},\vc{J^1},\ldots, \vc{J^k})$ if and only if $\vev(\vc{L}) \in \Span(\vev(\vc{J^1}),\ldots, \vev(\vc{J^k}))$.
\end{lemma}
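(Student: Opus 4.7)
The plan is to exploit the fact that both $\F$ and $k$ are fixed constants from the formula's point of view, so enumerating all possible coefficient tuples $\vebeta = (\beta_1,\ldots,\beta_k) \in \F^k$ inside the formula amounts to just a finite disjunction; no modulo counting and no set quantification beyond the free variables $\vc{L},\vc{J^1},\ldots,\vc{J^k}$ will be needed, and the resulting formula will in fact already be first-order over $\Sigma_\F$. Unpacking the goal, $\vev(\vc{L}) \in \Span(\vev(\vc{J^1}),\ldots,\vev(\vc{J^k}))$ says that there exist $\beta_1,\ldots,\beta_k \in \F$ such that for every row $r$, if $\alpha_0$ is the unique scalar with $r \in L_{\alpha_0}$ and $\alpha_s$ is the unique scalar with $r \in J^s_{\alpha_s}$ for each $s\in\{1,\ldots,k\}$, then $\alpha_0 = \sum_{s=1}^k \beta_s \alpha_s$ in $\F$.

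Concretely, for each tuple $\vebeta = (\beta_1,\ldots,\beta_k) \in \F^k$ I would precompute the finite set
\[
S_\vebeta \ := \ \Bigl\{\, (\alpha_0,\alpha_1,\ldots,\alpha_k) \in \F^{k+1} \ \Big|\ \alpha_0 = \sum_{s=1}^k \beta_s \alpha_s \,\Bigr\},
\]
and then take
\[
\phi_\vebeta(\vc{L},\vc{J^1},\ldots,\vc{J^k}) \ := \ \forall_R x \ \bigvee_{(\alpha_0,\alpha_1,\ldots,\alpha_k)\in S_\vebeta} \Bigl(\, x \in L_{\alpha_0} \ \land \ \bigwedge_{s=1}^k x \in J^s_{\alpha_s} \,\Bigr).
\]
The required formula is then
\[
LinCom(\vc{L},\vc{J^1},\ldots,\vc{J^k}) \ := \ \bigvee_{\vebeta \in \F^k} \phi_\vebeta(\vc{L},\vc{J^1},\ldots,\vc{J^k}).
\]
Since both the inner and outer disjunctions range over finite sets (of sizes $|\F|^k$ and $|\F|^k$ respectively), this is a syntactically legal first-order (hence CMSO) formula over $\Sigma_\F$, treating each virtual column as the $\F$-indexed tuple of free set variables introduced in Section~\ref{sec:virtual_columns}.

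For correctness I would argue as follows. Fix a matrix structure $\S(A)$, virtual columns $\vc{L},\vc{J^1},\ldots,\vc{J^k}$, and a tuple $\vebeta \in \F^k$. By the partition property of virtual columns, every row $r$ lies in exactly one class $L_{\alpha_0}$ and in exactly one class $J^s_{\alpha_s}$ per $s$, so the big disjunction inside $\phi_\vebeta$ evaluates to true at $r$ if and only if the uniquely determined tuple $(\alpha_0,\alpha_1,\ldots,\alpha_k)$ lies in $S_\vebeta$, i.e.\ if and only if the equation $\alpha_0 = \sum_{s=1}^k \beta_s \alpha_s$ holds at the $r$-th coordinate. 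Hence $\S(A) \models \phi_\vebeta$ iff $\vev(\vc{L}) = \sum_{s=1}^k \beta_s \vev(\vc{J^s})$, and the outer disjunction over $\vebeta$ then captures exactly membership in the span. The main ``obstacle'' is really only bookkeeping around the fact that a virtual column is a tuple of $|\F|$ set variables rather than a single set; this is painless given the $\vc{\cdot}$ shorthand already in place.
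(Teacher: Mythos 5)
Your proof is correct and is essentially identical to the paper's: both take an outer finite disjunction over coefficient tuples in $\F^k$, and for each such tuple a universal quantification over rows combined with an inner finite disjunction over consistent entry tuples to read off the values of the virtual columns at each row. The only difference is notational (you call the coefficients $\beta_s$ and the entries $\alpha_i$; the paper swaps these roles).
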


\begin{proof}
    For any $(\alpha_1,\ldots, \alpha_k) \in \F^{k}$ define 
 $$I(\alpha_1,\ldots, \alpha_k):= \{(\gamma,\beta_1,\ldots, \beta_k)~|~\alpha_1\beta_1 + \ldots + \alpha_k \beta_k = \gamma \}$$

We define the formula $LinCom(\la Z \ra,\la Y^1 \ra ,\ldots, \la Y^k \ra)$ by 
 $$ \bigvee_{(\alpha_1,\ldots, \alpha_k) \in \F^{k}} \forall x. R(x) \rightarrow \left( \bigvee_{(\gamma,\beta_1,\ldots, \beta_k) \in I(\alpha_1,\ldots, \alpha_k)} \left (\bigwedge_{i=1}^k (x \in Y^i_{\beta_i}) 
  \land x \in Z_\gamma \right) \right) $$

We now argue that the formula has the required property. Let $\la L \ra, \la J^1 \ra,\ldots, \la J^k \ra $ be virtual columns and set $\veu:=\vev(\vc{L})$, $\ve {v_1}:= \vev(\vc{J^1}), \ldots, \ve {v_k}:= \vev(\vc{J^k})$ to simplify the notation. Then $u \in \Span(\ve {v_1},\ldots, \ve {v_k})$ if and only if there exist $\alpha_1,\ldots, \alpha_k$ (the first disjunction in our formula) such that $\alpha_1 \ve {v_1} + \ldots + \alpha_k \ve {v_k} = \veu$. This equality is true if and only if for every row $x$ of $\S_A$ we have that if $\gamma$ is the value of $Z$ on row $r$ and $\beta_i$ is the value of $Y^i$ on row $r$, then $\gamma = \alpha_1 \beta_1 + \ldots + \alpha_k \beta_k$ (this is the innermost conjunction in our formula; the big disjunction over members of $I(\alpha_1,\ldots,\alpha_k)$ is there to obtain the values of all entries of $\la Z \ra,\la Y^1 \ra ,\ldots, \la Y^k \ra$ at row $x$). 
\end{proof}

\subsection{Linear dependence in quotient spaces in CMSO}
% \todo{maybe adjust the formula to $Dep_k$ to denote how many virtual columns it takes. Then we can take $Dep_0$ if we need to somewhere else in the paper}
\label{sec:dep}
\begin{lemma}
    For every $k$ there exists a CMSO formula $Dep(X, \vc{Y^1}, \ldots, \vc{Y^k})$ such that for any matrix $A$ over $\F$, any set $S$ of columns of $A$ and any virtual columns $\vc{Q^1}, \ldots, \vc{Q^k}$ we have that $S$ is linearly dependent with respect to $\vev(\vc{Q^1}), \ldots, \vev(\vc{Q^k})$ if and only if $\S(A) \models Dep(S, \vc{Q^1}, \ldots, \vc{Q^k})$.
\end{lemma}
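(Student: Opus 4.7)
The overall strategy is to lift the \emph{ZeroSum} trick sketched earlier in Section~\ref{sec:matrices}: we encode a would-be dependence $\sum_{c \in S} \lambda(c)\,\ve{u_c} \in \Span(\vev(\vc{Q^1}),\ldots,\vev(\vc{Q^k}))$ by guessing the coefficient function $\lambda\colon S \to \F$ as a partition of $S$, guessing the resulting linear-combination vector as a virtual column $\vc{Z}$, and then checking row by row that $\vc{Z}$ is correct, using modulo counting to handle sums of arbitrarily many field elements in a single row. The final membership in $\Span(\vev(\vc{Q^1}),\ldots,\vev(\vc{Q^k}))$ is then delegated to Lemma~\ref{lem:LinCom}. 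Let $p := |\F|$ and fix an enumeration $\F = \{\alpha_1,\ldots,\alpha_p\}$; note that $p \cdot \beta = 0$ for every $\beta \in \F$ because $p$ is divisible by the characteristic of $\F$.

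Concretely, the formula $Dep(X,\vc{Y^1},\ldots,\vc{Y^k})$ will existentially quantify column sets $X_1,\ldots,X_p$ forming a partition of $X$ (where $c \in X_i$ is read as ``$\lambda(c) = \alpha_i$'') together with a virtual column $\vc{Z}$ meant to represent $\vew := \sum_{i=1}^p \alpha_i \sum_{c \in X_i} \ve{u_c}$. The non-triviality of the combination is enforced by requiring that some $X_i$ with $\alpha_i \ne 0$ is non-empty, which is a single first-order disjunction. Correctness of $\vc{Z}$ is enforced by a universally quantified row condition: for every row $r$, the entry of $\vew$ at $r$ equals $\sum_{i,j \in [p]} \alpha_i \alpha_j \cdot |T_{i,j}^{(r)}|$, where $T_{i,j}^{(r)} := \{c \in X_i : A(r,c) = \alpha_j\}$. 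Since only $|T_{i,j}^{(r)}| \bmod p$ matters in $\F$, we take a big disjunction over all residue tuples $(t_{i,j})_{i,j \in [p]} \in \{0,\ldots,p-1\}^{p^2}$ paired with the corresponding target value $\gamma(\vec{t}) := \sum_{i,j} \alpha_i \alpha_j t_{i,j} \in \F$, assert $r \in Z_{\gamma(\vec{t})}$, and use the $mod_{t_{i,j},p}$ predicates applied to sets $T_{i,j}$ that are existentially quantified inside the row-scope and pinned down by the first-order constraint $\forall c.(c \in T_{i,j} \leftrightarrow C(c) \land c \in X_i \land Entry_{\alpha_j}(r,c))$. Finally, we append $LinCom(\vc{Z},\vc{Y^1},\ldots,\vc{Y^k})$ from Lemma~\ref{lem:LinCom}.

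For correctness, a satisfying assignment for the existential quantifiers corresponds exactly to a choice of non-zero $\lambda\colon S \to \F$ together with the vector $\vew = \sum_c \lambda(c)\ve{u_c}$ (materialised as $\vev(\vc{Z})$) lying in $\Span(\vev(\vc{Q^1}),\ldots,\vev(\vc{Q^k}))$; this is precisely the definition of linear dependence of $S$ with respect to $\vev(\vc{Q^1}),\ldots,\vev(\vc{Q^k})$ given in Section~\ref{sec:matrices}. The main technical point to get right is the row-dependent partition $T_{i,j}^{(r)}$: because these sets depend on the universally quantified row $r$, they must be introduced by existential set quantifiers nested inside the $\forall r$, with their contents fixed by a first-order biconditional; once this is done, the modulo-counting step proceeds exactly as in the ZeroSum example, and all disjunctions and conjunctions are of constant size because $p$ and $k$ are constants.
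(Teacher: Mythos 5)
Your proposal is correct and follows essentially the same route as the paper: guess the coefficient partition $X_1,\ldots,X_p$ of $S$, materialize the resulting linear combination as a virtual column $\vc{Z}$, verify $\vc{Z}$ row-by-row using modulo-counting predicates (with the auxiliary sets quantified inside the scope of the universally quantified row), and delegate span membership to Lemma~\ref{lem:LinCom}. The paper factors the row check through an intermediate formula $Dep(X,\vc{Z})$ for dependence with respect to a single virtual column and builds the row-sum in two layers ($sum_{\alpha,\beta}$ then $sum_\gamma$), whereas you do the row check in one pass with the doubly indexed family $T_{i,j}^{(r)}$ and absorb the scalar $\delta$ into $\vc{Z}$ itself; this is a purely cosmetic reorganization of the same argument.
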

\begin{proof}
It suffices to show that there exists a formula $Dep(X, \vc{Z})$ that expresses that the set $X$ of columns is dependent with respect to $\vc{Z}$. Then, the desired formula $Dep(X, \vc{Y^1}, \ldots, \vc{Y^k})$ can be written as
$$ Dep(X, \vc{Y^1}, \ldots, \vc{Y^k}):= \exists \vc{Z} \left( Dep(X, \vc{Z}) \land LinCom(\vc{Z}, \vc{Y_1}, \ldots, \vc{Y_k} ) \right) $$
where $LinCom$ is the formula from Lemma~\ref{lem:LinCom}.

Next, set $p := |\F|$. We now proceed to building the formula $Dep(X, \vc{Z})$ in steps from simpler formulas.
First, we show that for every $\alpha, \beta \in \F$ there exists a formula $sum_{\alpha,\beta}(X,y)$ such that the following holds: If $S$ is a set of columns of $A$ and $r$ is a row of $A$, then $\S_A \models sum_{\alpha,\beta}(S,r)$ if and only if 
$$ \sum_{\substack{c \in S \\ A(r,c) = \beta}} A(r,c) = \alpha. $$
In words, the formula $sum_{\alpha,\beta}(X,y)$ checks whether $\alpha$ is the sum of all entries of $A$ in row $r$ that intersect $Q$ and  have value $\beta$. We now describe the formula's construction. Let $S'$ be the subset of $S$ that contains precisely the columns $c$ for which $A(r,c) = \beta$. 
Note that all the entries $A(r,c)$ in the sum above are equal to $\beta$, and so we have 
$$ \sum_{\substack{c \in S \\ A(r,c) = \beta}} A(r,c) = \sum_{c\in S'} A(r,c) =  |S'|\cdot \beta $$
Note that in $\F$ we have $|S'|\cdot \beta = q\cdot\beta$, where $q \equiv |S'|\mod p$ (this is because $p\cdot \beta = 0$), and so the formula $sum_{\alpha,\beta}$ only needs to check whether $|S'| \equiv q \mod p$ for some $q$ with $q\cdot \beta = \alpha$. Thus, we can write $sum_{\alpha,\beta}$  as follows:
$$ sum_{\alpha,\beta}(X,y):= \bigvee_q \exists_C Z [(\forall z \in Z \leftrightarrow (Entry_{\beta}(y,z) \land z \in X)) \land 
mod_{q,p}(Z) ]$$ 
where the disjunction is over all $q \le p $ with $q\cdot \beta = \alpha$.

Next, we describe, for each $\gamma \in \F$, the formula $sum_\gamma(Y,y)$ such that for every row $r$ and every set $S$ of columns of $A$ we have that  $\S(A) \models sum_\gamma(S,r)$ if and only if 
\begin{equation}
\label{eq:sum_alpha}
\sum_{c \in Q } A(r,c) = \gamma. 
\end{equation}
Let us enumerate the elements of $\F$ as $\beta_1, \ldots, \beta_p$. Clearly we have
$$ \sum_{c \in S } A(r,c) = \sum_{i=1}^p  \sum_{\substack{c \in S \\ A(r,c) = \beta_i}} A(r,c).$$
Consequently, we have that~(\ref{eq:sum_alpha}) holds if and only if for some $\alpha_1,\ldots, \alpha_p$ with $\alpha_1 + \ldots + \alpha_p = \gamma$ we have
$$ \S(A) \models \bigwedge_{i=1}^p sum_{\alpha_i, \beta_i} (S,r) $$
Thus, we can write the formula $sum_\gamma(Y,y)$ as
$$ sum_\gamma(Y,y):= \bigvee_{\substack{(\alpha_1, \ldots, \alpha_p) \\\alpha_1 + \ldots + \alpha_p = \gamma}} \left(  \bigwedge_{i=1}^p sum_{\alpha_i, \beta_i} (Y,y) \right) $$

Finally, we describe the formula $Dep(X, \vc{Z})$ such that for every set $S$ of columns and every virtual column $\vc{Q}$ of $A$ we have that $\S(A) \models Dep(S, \vc{Q})$ if and only if $S$ is dependent with respect to $\vev(\vc{Q})$. By definition, $S = \{u_1,\ldots, u_\ell\}$ is dependent with respect to $v(\vc{Q})$ if there are coefficients $\alpha_1,\ldots,\alpha_\ell$ in $\F$, not all $0$, such that $ \alpha_1 u_1 + \ldots + \alpha_\ell u_\ell = \delta v(\vc{Q})$ for some $\delta \in \F$. In the language of $\S(A)$, where we have $S = \{c_1,\ldots, c_\ell\}$, this translates to requiring that for every row $r$ we have 
\begin{equation}
\label{eq:blah}
     \sum_{j=1}^\ell \alpha_i A(r,c_i) = \delta \sigma_r, 
\end{equation}
where $\sigma_r$ is the entry of $v(\vc{z})$ at row $r$.

Again, let $\beta_1,\ldots, \beta_p$ be an enumeration of the elements of $\F$. Since each coefficient $\alpha_j$ is equal to some $\beta_i$, we can think of an assignment of coefficients $\alpha_1,\ldots,\alpha_\ell$ to columns in $S$ as of partitioning of $S$ into sets $Y_1,\ldots,Y_p$, where we put $\alpha_j$ into $Y_i$ if $\alpha_j = \beta_i$. Here we require at least one $Y_i$ with $\beta_i\not=0$ is non-empty. Condition~(\ref{eq:blah}) then says that for each row $r$ we require that 
$$ \sum_{i=1}^p (\beta_i \cdot \sum_{c \in Y_i} A(r,c)) = \sigma_r$$

% \begin{itemize}
%     \item At least one $X_i$ with $\beta_i\not=0$ is non-empty, and
%     \item for each row $r$ of $A$ we have 
% $$ \sum_{i=1}^p (\beta_i \cdot \sum_{c \in X_i} A(r,c)) = \sigma$$
% where $\sigma$ is the entry of $v(\vc{Z})$ at row $r$. 
% \end{itemize}
This happens if for each row $r$ of $A$ there are $\gamma_1,\ldots, \gamma_p$ such that $\beta_1 \cdot \gamma_1 + \ldots + \beta_p \cdot \gamma_p = \sigma_r$ and  for each $i \in [p]$ we have that $ \S_A \models sum_{\gamma_i}(Y_i,r)$. Thus, we can write $Dep(X,\vc{Z})$ as
$$ \pushQED{\qed}
Dep(X, \vc{Z}):= \exists Y_1 \ldots \exists Y_p \forall_R y \bigvee_{\substack{(\gamma_1, \ldots, \gamma_p, \sigma) \\ \beta_1 \cdot \gamma_1 + \ldots + \beta_p \cdot \gamma_p = \sigma}} \left( \bigwedge_{1 \le i \le p} sum_{\gamma_i}(Y_i,c) \land y \in Z_{\sigma}
\right) \land \bigvee_{\substack{i=1 \\ \beta_i\not=0}}^p Y_i\not=\emptyset.
\qedhere
\popQED
$$
\renewcommand{\qedsymbol}{}
\end{proof}

\subsection{Formula for contraction$^*$-depth}

In this section we will prove Theorem~\ref{thm:csd_CMSO}. We start by showing that we can express connectivity and connected components in CMSO. To unclutter the notation we will assume that all set quantifiers in this section are over sets of columns, and we will write $\exists$, $\forall$ everywhere instead of $\exists_C$ and $\forall_C$.
\begin{lemma}
    For every $k$ there is a CMSO formula $cc(X,Z, \vc{Y_1}, \ldots, \vc{Y_k})$ such that for every matrix $A$, subsets $S$, $T$ of columns of $A$ with $S \subseteq T$ and virtual columns $\vc{Q_1},\ldots, \vc{Q_k}$ we have that $\S(A) \models cc(S,T,\vc{Q_1},\ldots, \vc{Q_k})$ if and only if $S$ is a connected component of $M(A,v(\vc{Q_k}),\ldots,v(\vc{Q_k}))[T]$.
\end{lemma}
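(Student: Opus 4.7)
The plan is to express matroid connectedness via the standard characterization of connected components as equivalence classes of the "being in a common circuit" relation, exploiting the fact (recorded in the preliminaries) that this relation is transitive. This transitivity is crucial, since the transitive closure of an arbitrary binary relation is not MSO-definable in general; here, transitivity collapses the closure to the one-step relation, which is straightforwardly expressible.

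First, I build an auxiliary formula $IsCircuit(C, \vc{Y_1}, \ldots, \vc{Y_k})$ asserting that $C$ is a minimal dependent set of $M(A, v(\vc{Y_1}), \ldots, v(\vc{Y_k}))$, using the $Dep$ formula from Section~\ref{sec:dep}:
\[
IsCircuit(C, \vc{Y_1}, \ldots, \vc{Y_k}) := Dep(C, \vc{Y_1}, \ldots, \vc{Y_k}) \,\land\, \forall C'.\, \bigl( (C' \subsetneq C) \to \lnot Dep(C', \vc{Y_1}, \ldots, \vc{Y_k}) \bigr).
\]
Because a subset $C \subseteq T$ is a circuit of $M[T]$ precisely when it is a circuit of $M$ contained in $T$, adding the conjunct $C \subseteq Z$ correctly selects the circuits of $M(A, v(\vc{Y_1}), \ldots, v(\vc{Y_k}))[T]$. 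I then combine this with an existential quantifier to obtain
\[
Rel(x, y, Z, \vc{Y_1}, \ldots, \vc{Y_k}) := (x = y) \,\lor\, \exists C.\, \bigl( C \subseteq Z \land x \in C \land y \in C \land IsCircuit(C, \vc{Y_1}, \ldots, \vc{Y_k}) \bigr),
\]
which says that $x$ and $y$ lie in the same connected component of $M[T]$. By the transitivity of the common-circuit relation, $Rel$ is an equivalence relation on $Z$ whose classes are exactly the connected components of $M[T]$.

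Finally, I express that $S$ is a non-empty $Rel$-equivalence class inside $Z$ by
\[
cc(X, Z, \vc{Y_1}, \ldots, \vc{Y_k}) := X \subseteq Z \,\land\, \exists x.\, (x \in X) \,\land\, \forall x \forall y.\, \bigl( (x \in X \land y \in Z) \to (y \in X \leftrightarrow Rel(x, y, Z, \vc{Y_1}, \ldots, \vc{Y_k})) \bigr).
\]
If the formula holds, fixing any witness $x \in X$ shows that $X$ equals the $Rel$-class of $x$ inside $Z$, hence a connected component of $M[T]$; conversely, if $X$ is such a component, the biconditional is satisfied for every $x \in X$ by definition of $Rel$ together with the equivalence-relation property. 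The main conceptual step is recognizing that transitivity lets us sidestep any transitive-closure construction; the remaining work is routine bookkeeping, including verifying that the unrestricted subset quantifier in the minimality clause of $IsCircuit$ suffices, which holds since $C' \subseteq C \subseteq Z$ forces $C' \subseteq Z$ automatically.
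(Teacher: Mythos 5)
Your proposal is correct and follows essentially the same strategy as the paper: both express circuits via a minimal-dependent formula built from $Dep$, both rely on the recorded transitivity of the common-circuit relation, and both then characterize connected components from it. The difference is only in the final packaging: the paper first defines a $conn(X,Z,\ldots)$ formula (``every pair in $X$ lies in a circuit inside $Z$'') and then takes $cc$ to be a maximal connected subset of $Z$, whereas you define a binary relation $Rel(x,y,Z,\ldots)$, argue it is an equivalence relation, and characterize $cc$ as a non-empty equivalence class. These are interchangeable, and if anything your reflexivity clause $x=y$ in $Rel$ handles single-element components (coloops/loops, which lie in no circuit) a bit more explicitly than the paper's $conn$ formula does.
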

\begin{proof}
    We can express that a set $Y$ of columns is a circuit: 
    $$ circ(Y,  \vc{Y_1}, \ldots, \vc{Y_k}):= Dep(Y,  \vc{Y_1}, \ldots, \vc{Y_k}) \land (\forall Y'. (Y' \subsetneq Y \rightarrow \lnot Dep(Y',  \vc{Y_1}, \ldots, \vc{Y_k}))  $$
Using this, it is straightforward to express that a set $X$ of columns is connected in the restriction of the original matroid to a set $Z$ of columns:
 \begin{align*}
 conn(X, Z, \vc{Y_1}, \ldots, \vc{Y_k}):= & \forall x \forall y (x \in X \land y \in X) \rightarrow \\
 & \rightarrow (\exists Y. Y \subseteq Z \land x \in Y \land y \in Y \land circ(Y, \vc{Y_1}, \ldots, \vc{Y_k})) 
 \end{align*}

Finally, it is easy to express that $X$ is a maximal connected set in $Z$.
\begin{align*}
 cc(X, Z, \vc{Y_1}, \ldots, \vc{Y_k}):=& conn(X, Z,\vc{Y_1}, \ldots, \vc{Y_k}) \land \\ &\land \forall Y. \left( (X \subseteq Z \land X \subsetneq Y) \rightarrow \lnot conn(Y, Z, \vc{Y_1}, \ldots, \vc{Y_k}) \right)     
\end{align*}

\end{proof}

\begin{proof}[Proof of Theorem~\ref{thm:csd_CMSO}]
For every $d$ we provide a formula that expresses that a subset $Z$ of columns of a matrix $A$ represents a matroid of contraction$^*$-depth at most $d$.

For contraction$^*$-depth $0$, we just need to check whether all non-empty subsets of columns are dependent (meaning that the only independent set has size $0$ and so the rank is $0$):
$$csd_0(Z,  \vc{Y_1}, \ldots, \vc{Y_k}):= \forall X. \left( X \subseteq Z \land X \not= \emptyset \right) \rightarrow Dep(X,  \vc{Y_1}, \ldots, \vc{Y_k})$$
For contraction$^*$-depth $d > 1$, we proceed according to the recursive definition:
\begin{align*}
    csd_d&(Z,\vc{Y_1},\ldots,\vc{Y_k}):=(conn(Z,Z, \vc{Y_1},\ldots,\vc{Y_k}) \rightarrow \exists \vc{Y}.(csd_{d-1}(Z,\vc{Y_1},\ldots,\vc{Y_k},\vc{Y})) \\ 
    & \land  (\lnot conn(Z,Z \vc{Y_1},\ldots,\vc{Y_k}) \rightarrow (\exists X. cc(X,Z, \vc{Y_1},\ldots,\vc{Y_k}) \rightarrow csd_{d}(X, \vc{Y_1},\ldots,\vc{Y_l}))) \\
    & \land \forall X. cc(X,Z, \vc{Y_1},\ldots,\vc{Y_k}) \rightarrow \bigvee_{i=0}^d csd_{i}(X, \vc{Y_1},\ldots,\vc{Y_l})))
\end{align*}

Finally, the sentence $csd_d$ will work on the set of all column vectors and will not use any  virtual columns:
$$ \pushQED{\qed} 
csd_d:= \exists Z \left((\forall x (x \in Z))  \land csd_d(Z)\right) 
\qedhere
\popQED
$$ 
\renewcommand{\qedsymbol}{}
\end{proof}

\section{Minimal matroids of given contraction$^*$-depth}

In this section we prove Theorem~\ref{thm:main1}, showing that deletion-minimal matroids with contraction$^*$-depth $d$ have size bounded in terms of $d$ and $|\F|$. We will proceed as follows. 
\begin{itemize}
    \item Going for contradiction, we will assume that there exists a matroid $M$ with $\csd(M) = d$ and $|M| > N$, where $N$ is suitably chosen number depending on $d$ and $|\FF|$. 
    \item Based on Corollary~\ref{cor:dual_td}, $M$ can be represented by a matrix $A$ of dual tree-depth $d$. We will show each such matrix can be encoded in a colored tree $T$ of depth $d+2$ in such a way that we can recover matrix the structure $\S(A)$ from $T$ by a CMSO interpretation.
    \item Since $\S(A)$ can be intrepreted in $T$, by Lemma~\ref{lem:interp} we can translate the sentence $csd_d$ from Theorem~\ref{thm:csd_CMSO} to a sentence $csd_d^{tree}$ such that $T \models \csd_d^{tree}$ if and only if the matrix represented by $T$ has contraction$^*$-depth $d$.
    \item By a result of~\cite{GajH15}, if $T$ is larger than $N$ (for some $N$ depending on $d$ and $csd_d^{tree}$), then there is a subtree $T'$ such that $T \models csd_d^{tree}$ if and only if $T' \models csd_d^{tree}$. From this we deduce that the matrix $A'$ that is encoded by $T'$ represents a matroid $M' = M(A')$ of contraction$^*$-depth $d$. We will show that this matroid is a restriction of $M$, and this will yield a contradiction with our assumption that $M$ is deletion-minimal matroid of contraction$^*$-depth $d$.
\end{itemize}

\subsection{Encoding matrices in trees}
\label{sec:encoding_matrices}

A \emph{$(d, \F)$-matrix-tree} is a rooted tree $T$ of depth at most $d$ equipped with unary predicates from the set $\{L_R, L_C\} \cup \{L_{i,\alpha}~|~ i \in \{0,\ldots, d\} \text{ and } \alpha \in \FF\}$ with the following properties:
\begin{itemize}
    \item If a node $v$ of $T$ is marked with a unary predicate $L_C$, then $v$ is a leaf.
    \item All other nodes of $T$ except for the root are marked with $L_R$.
    \item If $v$ is a leaf marked with $L_C$ and $v$ is at distance $k \le d$ from the root, then for each $i \in \{0,\ldots, k-1\}$ there exists exactly one $\alpha \in \F$ such that $v$ is marked with $L_{i,\alpha}$.
\end{itemize}

To each $(d, \F)$-matrix-tree  $T$ we assign a matrix $A_T$ as follows:
\begin{itemize}
    \item The rows of $A_T$ are the nodes of $T$ marked with $L_R$.
    \item The columns of $A_T$ are the nodes of $T$ marked with $L_C$.
    \item The entry $A_T$ at row $r$ and column $c$ is determined by the following rule: If $r$ is not an ancestor of $c$ in $T$, then $A(T)(r,c) := 0$. If $r$ is an ancestor of $c$ in $T$ at distance $i$ from the root, then $A(T)(r,c) := \alpha$, where $\alpha$ is the unique element of $\F$ such that $c$ is marked with $L_{i,\alpha}$.
\end{itemize}

\begin{lemma}
\label{lemma:matrix-tree}
    Let $A$ be a matrix over $\F$ of dual tree-depth at most $d$. Then there is a $(d+3,\F)$-matrix-tree $T(A)$ such that $A = A(T(A))$.
\end{lemma}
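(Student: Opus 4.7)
The plan is to extract a suitable elimination forest of the dual graph $G_D(A)$ and then attach a fresh leaf for each column, placing it directly below the deepest row containing a non-zero entry of that column, with the entries of the column encoded into the $L_{i,\alpha}$ marks along the leaf's root-path. Since $\td(G_D(A)) \le d$ by hypothesis, fix an elimination forest $F$ of $G_D(A)$ of depth at most $d-1$. I would construct $T(A)$ by introducing a fresh root $r_0$, attaching the tree roots of $F$ as children of $r_0$, and marking every non-root inherited vertex with $L_R$. This scaffold is a rooted tree of depth at most $d$ whose non-root vertices are exactly the rows of $A$.

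Next I would insert the column leaves. For each column $c$, the set $N(c) := \{r : A(r,c) \neq 0\}$ induces a clique in $G_D(A)$ by the definition of the dual graph, so by Lemma~\ref{lem:clique} its elements are pairwise comparable under the ancestor relation of $F$, forming a chain. I would attach a new leaf $\ell_c$, marked $L_C$, as a child of the deepest element $v_c$ of $N(c)$, or directly below $r_0$ if $N(c) = \emptyset$. For every $i \in \{0, \ldots, \mathrm{depth}(\ell_c) - 1\}$, letting $a_i$ denote the ancestor of $\ell_c$ at depth $i$, I would add the mark $L_{i, \alpha_i}$ to $\ell_c$, where $\alpha_i := A(a_i, c)$ when $a_i$ is a row and $\alpha_i := 0$ by convention when $a_i = r_0$.

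The three structural constraints defining a matrix-tree are immediate from the construction: $L_C$-marked vertices are leaves, every other non-root vertex is marked $L_R$, and every column leaf carries exactly one $L_{i, \alpha}$-mark for each depth $i$ below its own. The depth of $T(A)$ is at most $d+1$, well within the $d+3$ allowance. For the correctness statement $A = A(T(A))$, fix a row $r$ and a column $c$. If $r$ is not an ancestor of $\ell_c$ then $r \notin N(c)$, because $N(c)$ is a chain with $v_c$ as its deepest element, so both $A(r,c)$ and $A(T(A))(r, \ell_c)$ vanish. Otherwise $r$ is an ancestor of $\ell_c$ at some depth $i$, and the construction places precisely the mark $L_{i, A(r,c)}$ on $\ell_c$, giving $A(T(A))(r, \ell_c) = A(r, c)$.

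The only non-trivial ingredient is Lemma~\ref{lem:clique}, which is what allows each column to be represented by a single leaf rather than requiring the column's non-zero entries to be distributed across several branches. Once that is invoked, the remaining work is bookkeeping about labels and depths, and the generous slack in the $d+3$ bound comfortably absorbs the single extra level introduced by the new root.
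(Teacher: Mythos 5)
Your proof is essentially the same as the paper's: build an elimination forest of $G_D(A)$, invoke Lemma~\ref{lem:clique} to find for each column a single vertex below which all of its non-zero rows lie, hang a fresh $L_C$-leaf there, encode the column entries via the $L_{i,\alpha}$ marks along its root-path, and add a fresh top root. The only departures are minor tidying: you explicitly handle all-zero columns ($N(c)=\emptyset$) by attaching the leaf to the root, and you assign $L_{0,0}$ for the root ancestor so the matrix-tree axiom ``exactly one mark per depth $i$'' holds literally, both points the paper glosses over; you also observe the sharper depth bound $d+1$, though the $d+3$ slack makes this immaterial.
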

\begin{proof}
We construct the tree as follows:
    \begin{itemize}
        \item Let $F$ be an elimination forest of the dual graph $G_D(A)$ of $A$ of depth $d$ + 1; note that vertices of $F$ are the rows of $A$. For every column $c$ of $A$ do the following: Let $S_c$ be the set of rows $r$ for which $A(r,c) \not= 0$. By construction of $G_D(A)$, these rows form a clique $K$ in $G_D(A)$, and so by Lemma~\ref{lem:clique} there is a vertex $v \in S_c$ such that all vertices in $S_c$ are ancestors of $v$ in $F$. Create a new vertex for column $c$ and make it a child of $v$. Finally, create a new vertex and make it a parent of the roots of connected components of $F$. This finishes the construction of $T(A)$, apart from the labelling.
        \item We now describe the labelling of $T(A)$. All vertices of the original elimination forest $F$ will be marked with unary predicate $L_R$ and all newly created vertices for columns will receive unary predicate $L_C$. Finally, we will mark every vertex $c$ corresponding to a column of $A$ as follows: if $c$ has an ancestor $r$ at distance $i$ from the root of $T$ and the entry $A(r,c) = \alpha$, then we mark $c$ with label $L_{i,\alpha}$.
    \end{itemize}
The fact that $A = A(T(A))$ follows directly from the construction.
\end{proof}
%\todo{picture here?}

\begin{lemma}
\label{lemma:interp_matrix_tree}
There is an interpretation $I$ from the language of labelled rooted trees to the language matrices over $\F$ such that 
for any matrix $A$ of dual tree-depth $d$ we have that $\S(A) = I(T(A))$.
\end{lemma}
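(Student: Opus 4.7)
The plan is to write down the interpretation $I = (\nu(x), \rho_R(x), \rho_C(x), \{\psi_\alpha(x,y)\}_{\alpha\in\F})$ by a direct translation of the construction of $T(A)$ given in Lemma~\ref{lemma:matrix-tree}, and then verify that the produced matrix structure agrees with $\S(A)$ on each predicate.

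First I would express the two auxiliary notions that are needed. Since every $(d+3,\F)$-matrix-tree has depth at most $d+3$, I can define, for each $i\in\{0,\ldots,d+3\}$, an FO formula $\mathrm{depth}_i(x)$ by induction: $\mathrm{depth}_0(x) := \lnot \exists y.\, par(y,x)$ and $\mathrm{depth}_{i+1}(x) := \exists y.\, par(y,x) \land \mathrm{depth}_i(y)$. For the ancestor relation I would use the standard MSO formula
$$\mathrm{anc}(x,y) := x\neq y \land \forall Z.\, \bigl(y\in Z \land \forall u,v.\,(par(u,v)\land v\in Z \to u\in Z)\bigr)\to x\in Z,$$
(alternatively, a finite disjunction of paths of length at most $d+3$ would work since the depth is bounded).

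Next I would define the interpretation itself. The universe of the matrix structure consists of all nodes except the artificial root, and the row/column predicates are copied directly from the tree labels:
\begin{align*}
\nu(x) &:= L_R(x) \lor L_C(x), & \rho_R(x) &:= L_R(x), & \rho_C(x) &:= L_C(x).
\end{align*}
For the entry predicates I would transcribe the rule of Lemma~\ref{lemma:matrix-tree}: an entry at row $x$ and column $y$ equals $\alpha\in\F$ iff $x$ is an ancestor of $y$ and $y$ carries the label $L_{i,\alpha}$ where $i$ is the depth of $x$. This gives, for every $\alpha\in\F$,
$$
\psi_\alpha(x,y) := L_R(x) \land L_C(y) \land \mathrm{anc}(x,y) \land \bigvee_{i=1}^{d+3}\bigl(\mathrm{depth}_i(x) \land L_{i,\alpha}(y)\bigr),
$$
while for $\alpha = 0$ one must additionally capture the case that $x$ is not an ancestor of $y$:
$$
\psi_0(x,y) := \bigl(L_R(x)\land L_C(y)\land \lnot \mathrm{anc}(x,y)\bigr) \lor \Bigl( L_R(x)\land L_C(y)\land \bigvee_{i=1}^{d+3}(\mathrm{depth}_i(x)\land L_{i,0}(y))\Bigr).
$$

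Finally I would verify that $I(T(A)) = \S(A)$ for any matrix $A$ with $\mathrm{td}_D(A)\le d$. The universes agree because the universe of $\S(A)$ is $\rows(A)\cup\columns(A)$, which by construction of $T(A)$ is exactly the set of nodes marked $L_R$ or $L_C$; the realizations of $R,C$ in $I(T(A))$ match those in $\S(A)$ for the same reason. For the entry predicates, the rule defining $A(T(A))(r,c)$ in Lemma~\ref{lemma:matrix-tree} is precisely what $\psi_\alpha$ computes: it yields $\alpha$ exactly when $r$ is an ancestor of $c$ at depth $i$ and $c$ carries the label $L_{i,\alpha}$, and $0$ otherwise. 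Since Lemma~\ref{lemma:matrix-tree} already asserts $A = A(T(A))$, each pair $(r,c)$ lies in $\mathrm{Entry}_\alpha^{\S(A)}$ iff it lies in $\mathrm{Entry}_\alpha^{I(T(A))}$, concluding the proof. The main (and essentially only) conceptual point is that dual tree-depth was bounded by $d$ so that the depths of nodes in $T(A)$ are bounded by a constant depending on $d$; this makes the ancestor relation and depth predicates expressible with a finite disjunction, and in fact the MSO definition of ancestor is formally even cheaper. No other obstacle arises.
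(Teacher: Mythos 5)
Your proof is correct and follows essentially the same approach as the paper: translate the tree labels directly into matrix predicates using the depth of the row node and the label $L_{i,\alpha}$ on the column node, guarded by the ancestor relation, with $\nu$ selecting all non-root nodes (your $L_R(x)\lor L_C(x)$ is equivalent to the paper's $\lnot root(x)$). One small remark: you correctly put $\mathrm{depth}_i(x)\land L_{i,\alpha}(y)$ (row $x$ at depth $i$, column $y$ carrying the label), whereas the paper's displayed formula has the arguments swapped, $depth_k(y)\land L^{k,\alpha}(x)$, which appears to be a typo inconsistent with the construction of $T(A)$ in Lemma~\ref{lemma:matrix-tree}.
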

\begin{proof}
To show this, we need to describe formulas $\nu(x)$, 
 $\rho(x)_R$, $\rho_C(x)$ and $\psi_{\alpha}(x,y)$ 
 for each $\alpha \in \F$ that define the new universe and express that $x$ is a row of $\S(A)$, $x$ is a column of $\S(A)$ and that the entry of $\S(A)$ at row $x$ and column $y$ is $\alpha$, respectively.
 
 These formulas can be obtained as follows. We set $\nu(x):= \lnot root(x)$, where $root(x)$ is a formula that states that $x$ has no parent. We then set $\rho_R(x):=L_R(x)$ and $\rho_C(x):=L_C(x)$. To recover the entries of $A$, for $\alpha \not= 0$ we set $$\psi_\alpha(x,y):= L_R(x) \land L_C(y) \land anc(x,y) \land \left(\bigvee_{k=1}^d (depth_k(y) \land L^{k,\alpha}(x)) \right)$$
where $anc(x,t)$ says that $y$ is an ancestor of $x$ and $depth_k(x)$ says that $x$ is at distance $k$ from the root; both of these are easily expressible in CMSO. For $\alpha = 0$ we set $\varphi_\alpha(x,y):= L_R(x) \land L_C(y) \land 
 \bigwedge_{\beta\not=0} \lnot \varphi_\beta(x,y)$.
\end{proof}

As a simple corollary of the previous two lemmas we get a CMSO sentence that can check whether contraction$^*$-depth of a matrix $A$ is equal to $d$ by looking at tree $T(A)$ from Lemma~\ref{lemma:matrix-tree}.
\begin{lemma}
\label{lem:csd_tree}
    For every $d$ and $\F$ there exists a CMSO sentence $csd_d^{tree}$ such that if $A$ is a matrix of dual tree-depth $d$ and $T(A)$ is the $(d+3,\F)$-matrix-tree from Lemma~\ref{lemma:matrix-tree}, then $T(A) \models \csd_d^{tree}$ if and only if contraction$^*$-depth of $A$ is $d$.
\end{lemma}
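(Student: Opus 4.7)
The plan is to combine the CMSO sentence $csd_d$ provided by Theorem~\ref{thm:csd_CMSO} with the interpretation $I$ constructed in Lemma~\ref{lemma:interp_matrix_tree}, and then exploit Lemma~\ref{lem:interp} to transport the property from the matrix side to the tree side. Note first that $csd_d$ from Theorem~\ref{thm:csd_CMSO} actually expresses contraction$^*$-depth \emph{at most} $d$, whereas we want to characterise contraction$^*$-depth \emph{equal to} $d$. We will therefore define the desired sentence by combining the ``at most $d$'' sentence with the negation of the ``at most $d-1$'' one.

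First, I would apply the interpretation $I$ to the matrix-side sentences $csd_d$ and $csd_{d-1}$, obtaining two tree-side CMSO sentences $I(csd_d)$ and $I(csd_{d-1})$. By Lemma~\ref{lem:interp} together with the identity $\S(A) = I(T(A))$ guaranteed by Lemma~\ref{lemma:interp_matrix_tree}, for every matrix $A$ of dual tree-depth at most $d$ (so that $T(A)$ is well-defined) and for every $e \in \{d-1, d\}$ we have
\[
    T(A) \models I(csd_e) \iff I(T(A)) \models csd_e \iff \S(A) \models csd_e \iff \csd(M(A)) \le e.
\]

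Next, I would simply set
\[
    csd_d^{tree} \;:=\; I(csd_d) \land \lnot\, I(csd_{d-1}),
\]
(with the convention that $csd_{-1}$ is taken to be $\bot$ in the degenerate case $d=0$, so that $I(csd_{-1})$ is false). Combining the two equivalences above, $T(A) \models csd_d^{tree}$ holds if and only if $\csd(M(A)) \le d$ and $\csd(M(A)) \not\le d-1$, which is exactly the assertion that $\csd(M(A)) = d$, as required.

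In effect, there is no real obstacle beyond putting together the machinery already developed earlier in the section: Theorem~\ref{thm:csd_CMSO} gives us CMSO-definability on matrix structures, Lemma~\ref{lemma:interp_matrix_tree} gives us an interpretation from trees to matrix structures, and Lemma~\ref{lem:interp} is the standard transfer principle for interpretations. The only point meriting a brief sanity check is that $I$ does not depend on $d$ (only on $\F$), so pulling back both $csd_d$ and $csd_{d-1}$ through the same interpretation is legitimate, and the resulting sentence $csd_d^{tree}$ is a bona fide CMSO sentence over the signature of $(d{+}3,\F)$-matrix-trees.
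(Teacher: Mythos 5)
Your proposal is correct and follows essentially the same route as the paper: apply the interpretation $I$ from Lemma~\ref{lemma:interp_matrix_tree} to the matrix-side sentence from Theorem~\ref{thm:csd_CMSO} and transfer via Lemma~\ref{lem:interp}. In fact you are slightly more careful than the paper's own one-line proof, which writes ``$\S(A) \models csd_d$ if and only if $\csd(A) = d$'' and sets $csd_d^{tree} := I(csd_d)$, even though Theorem~\ref{thm:csd_CMSO} as stated only gives the sentence $csd_d$ expressing $\csd(M(A)) \le d$. Your explicit choice $csd_d^{tree} := I(csd_d) \land \lnot I(csd_{d-1})$ repairs this mismatch; this matters downstream, since the proof of Lemma~\ref{lem:main1} genuinely needs the pruned tree $T'$ to encode a matroid of contraction$^*$-depth exactly $d$ (not merely at most $d$) in order to derive a contradiction with deletion-minimality. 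Your remark that $I$ is independent of $d$, so the same interpretation can be applied to both $csd_d$ and $csd_{d-1}$, is a correct and worthwhile sanity check.
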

\begin{proof}
    From Theorem~\ref{thm:csd_CMSO} we have a CMSO sentence  $csd_d$ such that $\S(A) \models csd_d$ if and only if $\csd(A) = d$. Since by Lemma~\ref{lemma:interp_matrix_tree} there exists intrepretation $I$ such that $\S(A) = I(T(A))$, by Lemma~\ref{lem:interp} there is a formula $csd_d^{tree}:=I(csd_d)$ such that $\S(A) \models csd_d$ if and only if $T(A) \models csd_d^{tree}$.
\end{proof}

\subsection{Finishing the proof}
\label{sec:proof_finish}

Let $T$ be a tree and let $v$ be a vertex of $T$. A \emph{subtree $T_v$ of $T$ based in $v$} is the subtree of $T$ consisting $v$ together with all its descendants. 
If $v_1,\ldots, v_\ell$ are nodes of $T$, then by $T - (T_{v_1} \cup \ldots \cup T_{v_{\ell}})$ we mean the tree obtained from $T$ by removing all vertices from $T_{v_1} \cup \ldots \cup T_{v_{\ell}}$. 
We will need the following Theorem from~\cite{GajH15}, which we present here in a form adjusted to our purposes.

\begin{theorem}[\cite{GajH15}, Theorem 3.5]
\label{thm:reduce}
    For every $d$, $m$ and every CMSO sentence $\varphi$ there exist $N$ and $M$ with the following property: Let $T$ be a tree of depth at most $d$ labelled with at most $m$ unary predicates such that $T \models \varphi$. If $|T| > N$, then there is a node $v \in V(T)$ and $M$ nodes  $v_1,\ldots, v_M$ among its children such that for $T':=T-(T_{v_1} \cup \ldots \cup T_{v_M})$ we have that $T' \models \varphi$. 
\end{theorem}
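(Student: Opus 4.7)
The plan is to invoke the composition method for CMSO on rooted labelled trees (a Feferman--Vaught/Shelah-style theorem), combined with a height-controlled pigeonhole argument. Fix the quantifier rank $q$ of $\varphi$ and the alphabet of $m$ unary labels. A standard fact is that there are only finitely many CMSO-theories of quantifier rank $q$ realised by pointed labelled trees of depth at most $d$; let $K = K(q, m, d)$ denote this number. The composition theorem for CMSO on labelled trees yields integers $t = t(q)$ and $p = p(q)$ such that the CMSO-theory of $T_v$ of quantifier rank $q$ is determined by the label of $v$ together with, for each possible child-subtree theory $\tau$, the number $k_\tau$ of children of $v$ whose subtree has theory $\tau$, counted exactly when $k_\tau < t$ and only modulo $p$ otherwise. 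The second clause is precisely where modular-counting quantifiers come into play: plain MSO would yield only a threshold, whereas CMSO forces counts to also be tracked modulo $p$.

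Set $M := p$ and $D := K \cdot (t + p)$, and let $N$ be any integer strictly greater than $1 + D + D^2 + \cdots + D^d$. If $|T| > N$, then the depth bound forces some node $v \in V(T)$ to have more than $D$ children (otherwise $T$ would have at most $1 + D + \cdots + D^d$ nodes). Among those $> D$ children, pigeonhole on the $K$ possible child-subtree theories yields at least $t + p$ children sharing a common subtree-theory $\tau$. Pick any $M = p$ of them and call them $v_1, \ldots, v_M$. Removing them decreases $k_\tau$ by exactly $p$, which keeps $k_\tau \ge t$ and preserves its residue modulo $p$; by the composition theorem, the CMSO-theory of the subtree $T_v$ of quantifier rank $q$ is unchanged.

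Walking up the path from $v$ to the root, each ancestor's subtree-theory is computed from its label and the multiset of its children's subtree-theories; since the theory of the $T_v$ summand is preserved, so are the subtree-theories of every ancestor, and in particular the CMSO-theory of the whole tree $T$. Hence $T' := T - (T_{v_1} \cup \ldots \cup T_{v_M})$ satisfies exactly the same CMSO sentences of quantifier rank $q$ as $T$, and in particular $T' \models \varphi$.

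The main obstacle is not the pigeonhole argument but the composition theorem itself, with explicit thresholds $t$ and $p$ as functions of the quantifier rank $q$. This is a standard but somewhat technical extension of Feferman--Vaught to CMSO on trees; once available, the bounded-depth pigeonhole and the bottom-up propagation of CMSO-theories are routine.
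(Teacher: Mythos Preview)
The paper does not give its own proof of this statement: Theorem~\ref{thm:reduce} is quoted verbatim from \cite{GajH15} and used as a black box, so there is nothing in the present paper to compare your argument against.

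That said, your sketch is the standard and correct route to this result. The Feferman--Vaught/Shelah composition theorem for CMSO on rooted labelled trees does give, for each quantifier rank $q$, a threshold $t$ and a modulus $p$ such that the $q$-type of $T_v$ depends only on the label of $v$ and on the counts $k_\tau$ of children with each subtree-type $\tau$, recorded exactly below $t$ and modulo $p$ thereafter. Your pigeonhole step is clean: a depth-$d$ tree with more than $1 + D + \cdots + D^d$ nodes has a node of out-degree exceeding $D = K(t+p)$, hence at least $t+p$ children sharing a type, and deleting exactly $p$ of them keeps $k_\tau \ge t$ while fixing its residue. The upward propagation of the $q$-type along the root path then follows immediately from the same composition principle. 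One small point worth making explicit in a full write-up is that the composition step preserves the $q$-type (not some higher rank) of $T_v$, so the same $K$ and the same $(t,p)$ apply uniformly at every level; this is what makes the induction up to the root go through without the quantifier rank blowing up.
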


Finally, we will need one more technical lemma.
\begin{lemma}
\label{lem:ignore_rows}
Let $T$ be a $(d,\F)$-matrix-tree and let $v$ be its vertex.
Let $T' = T - T_v$, and let $T''$ be obtained from $T$ by removing all leaves of $T_v$ marked with unary predicate $L_c$.
% Let $A'$ be the matrix obtained from $A$ by removing all rows and columns in $T(A)_v$, and let $A''$ be the matrix obtained from by removing just the columns of $T(A)_v$ (the leaves of $T(A)_v$). 
Then $M(A(T'))$ is isomorphic to $M(A(T''))$.
\end{lemma}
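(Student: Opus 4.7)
My plan is to compare the matrices $A(T')$ and $A(T'')$ directly from the definition in Section~\ref{sec:encoding_matrices} and show that $A(T'')$ is obtained from $A(T')$ by inserting all-zero rows, which does not alter the represented matroid.

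First, I would identify the rows and columns of the two matrices. By construction, $T' = T - T_v$ removes all nodes in the subtree $T_v$, so $A(T')$ has as rows exactly the $L_R$-labelled nodes of $T$ lying outside $T_v$, and as columns exactly the $L_C$-labelled leaves lying outside $T_v$. On the other hand, $T''$ only removes the $L_C$-labelled leaves of $T_v$ while retaining the $L_R$-labelled internal nodes of $T_v$, so $A(T'')$ has the \emph{same} set of columns as $A(T')$, but additionally contains one row per $L_R$-labelled node of $T_v$.

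Second, I would argue that the entries of $A(T'')$ at the shared rows and columns agree with those of $A(T')$. For a row $r$ and column $c$ both lying outside $T_v$, whether $r$ is an ancestor of $c$ and the labels $L_{i,\alpha}$ of $c$ are unaffected by removing nodes of $T_v$, so the defining rule for matrix entries (in Section~\ref{sec:encoding_matrices}) produces the same value in $T'$, $T''$, and $T$ itself. For the additional rows of $A(T'')$, i.e.\ rows $r$ corresponding to $L_R$-nodes inside $T_v$, note that any column $c$ of $A(T'')$ is an $L_C$-leaf outside $T_v$; since $r$ is a strict descendant of $v$ and $c$ is not in $T_v$, $r$ cannot be an ancestor of $c$ in $T''$. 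By the definition of $A(T'')$, every such entry is therefore $0$.

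Finally, I would conclude by observing that $A(T'')$ is obtained from $A(T')$ by inserting some number of zero rows in designated positions. Inserting zero rows preserves linear (in)dependence of any set of column vectors, hence the vector matroids $M(A(T'))$ and $M(A(T''))$ have the same ground set (the common set of columns) and the same independent sets, so they are isomorphic. No step here is genuinely hard; the only point that requires care is the ancestor observation for the new rows, but it follows immediately from $r \in V(T_v)$ and $c \notin V(T_v)$.
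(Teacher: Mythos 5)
Your proof is correct and follows essentially the same approach as the paper's: both identify that $A(T')$ and $A(T'')$ share the same columns, observe that the extra rows of $A(T'')$ (the $L_R$-nodes in $T_v$) are identically zero because no column surviving in $T''$ is a descendant of such a row, and conclude that appending zero rows does not change the represented matroid. One small wording slip: you say $r$ is a \emph{strict} descendant of $v$, but $r$ could be $v$ itself; the conclusion that $r$ cannot be an ancestor of a column $c \notin T_v$ still holds, since $c$ being a descendant of $r$ would force $c \in T_v$.
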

\begin{proof}
Set $A' := A(T')$ and $A'' := A(T'')$.
Note that $A'$ and $A''$ have the same set of columns; the difference between them is that $A''$ contains more rows -- these correspond precisely to the vertices of $T_v$ marked with predicate $L_R$.
We will argue that in $A'$ these rows are identically $0$. Then clearly we will have that any set $X$ of columns of $A'$ is independent if an only if it is independent in $A''$, and the result follows.

To argue that all rows contained in $T_v$ are identically $0$ in $A''$, we proceed as follows.
Let $r$ be any row contained in $T_v$.
If $c$ is any column such that $A(r,c)\not=0$, then by construction of $T(A)$ we have that $c$ must be a descendant of $r$, and therefore $c$ must be a leaf of $T_v$. Since $A''$ does not contain any leaves of $T(A)_v$, this finishes the proof.
\end{proof}

Theorem~\ref{thm:main1} will be an easy corollary of the following lemma.
\begin{lemma}
\label{lem:main1}
Let $\F$ be a finite field.
Let $M$ be an $\mathbb{F}$-represented matroid such that
$\csd (M) = d$ and $\csd (M \setminus S) < d$ for every non-empty subset $S$ of elements of $M$. Then $|M| \leq f(|\FF|, d)$ for some function $f$.
\end{lemma}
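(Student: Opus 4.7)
My plan follows the blueprint sketched in the bullet list preceding the lemma, so I shall mostly fill in the details. Assume for contradiction that $|M| > N$ for a threshold $N = N(|\F|, d)$ to be fixed shortly. By Corollary~\ref{cor:dual_td} we fix an $\F$-representation $A$ of $M$ with dual tree-depth $d$; without loss of generality we may assume $A$ has no identically-zero row, since deleting such rows changes neither $M$ nor the dual tree-depth (an all-zero row is an isolated vertex of $G_D(A)$). Since $M$ has $|M|$ columns and at most $r(M) \leq |M|$ non-zero rows, applying Lemma~\ref{lemma:matrix-tree} gives a $(d+3,\F)$-matrix-tree $T := T(A)$ with $|T| \leq 2|M| + 1$. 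By Lemma~\ref{lem:csd_tree}, the fact that $\csd(M) = d$ translates to $T \models csd_d^{tree}$.

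Now I apply Theorem~\ref{thm:reduce} to $T$, the sentence $csd_d^{tree}$, depth bound $d+3$, and label alphabet of size at most $2 + (d+3)|\F|$; this yields threshold $N_0 = N_0(|\F|,d)$ and count $M_0 = M_0(|\F|,d)$. Setting $N := (N_0 - 1)/2$, the assumed $|M| > N$ gives $|T| > N_0$, so the theorem produces a vertex $v \in V(T)$ and $M_0$ children $v_1,\ldots,v_{M_0}$ of $v$ such that $T' := T - (T_{v_1} \cup \cdots \cup T_{v_{M_0}})$ still satisfies $T' \models csd_d^{tree}$. Crucially, $T'$ inherits all labels from $T$ and the three matrix-tree conditions (leaves of type $L_C$, non-root labelling, entry labels) are local, so $T'$ is itself a $(d+3,\F)$-matrix-tree. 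Hence $I(T') = \S(A(T'))$ via the interpretation of Lemma~\ref{lemma:interp_matrix_tree}, and by Lemmas~\ref{lem:interp} and~\ref{lem:csd_tree} we conclude $\csd(M(A(T'))) = d$.

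It remains to identify $M(A(T'))$ as a restriction $M \setminus S$ with $S \neq \emptyset$, which will contradict deletion-minimality. Let $T''$ be obtained from $T$ by deleting only those $L_C$-leaves that lie inside $T_{v_1} \cup \cdots \cup T_{v_{M_0}}$; then $A(T'')$ is exactly $A$ with the column set $S$ (the removed leaves) deleted, so $M(A(T'')) = M \setminus S$. Applying Lemma~\ref{lem:ignore_rows} iteratively to $v_1,\ldots,v_{M_0}$ (their subtrees are disjoint, being rooted at distinct children of $v$) gives $M(A(T')) \cong M(A(T''))$, hence $\csd(M \setminus S) = d$. I still have to argue $S \neq \emptyset$: if some $v_i$ is itself an $L_C$-leaf then trivially $v_i \in S$; otherwise every $v_i$ is $L_R$-labelled, and since $A$ has no zero rows every row $r$ of $A$ admits some column $c$ with $A(r,c) \neq 0$, which by the construction in Lemma~\ref{lemma:matrix-tree} is placed as a descendant of $r$. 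Thus $T_{v_i}$ contains at least one $L_C$-leaf, and $S$ is non-empty. This contradicts the hypothesis $\csd(M \setminus S) < d$ and closes the argument.

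The main delicate point is exactly this final non-emptiness check: without it, Theorem~\ref{thm:reduce} might conceivably prune only $L_R$-nodes whose removal does nothing to the matroid, yielding no contradiction. The reduction to the no-zero-row assumption handles this cleanly, and all other steps — the dual-tree-depth representation, the matrix-tree encoding, the CMSO interpretation, and the passage between $T'$ and $T''$ via Lemma~\ref{lem:ignore_rows} — are direct invocations of the tools already developed in the paper.
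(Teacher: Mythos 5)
Your overall approach matches the paper's own: represent $M$ by a matrix $A$ of dual tree-depth $d$ via Corollary~\ref{cor:dual_td}, encode $A$ as a $(d+3,\F)$-matrix-tree $T(A)$ via Lemma~\ref{lemma:matrix-tree}, invoke Theorem~\ref{thm:reduce} to prune subtrees while preserving $csd_d^{tree}$, and translate the pruned tree back into a restriction of $M$ via Lemmas~\ref{lem:ignore_rows} and~\ref{lemma:interp_matrix_tree}.

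There is, however, a genuine error in the size bookkeeping. You assert $|T| \leq 2|M|+1$ (this is itself not justified as stated: Corollary~\ref{cor:dual_td} says nothing about the number of rows of $A$, which need not be bounded by $r(M)$) and then use this \emph{upper} bound to derive $|T| > N_0$ from $|M| > (N_0-1)/2$. That inference runs the wrong way: an upper bound on $|T|$ cannot witness that $|T|$ is large. What is actually needed, and what the paper uses, is the trivial \emph{lower} bound $|T| \geq |M|$, since every column of $A$ is a leaf of $T(A)$; one then simply takes $N := N_0$. The repair is immediate, but as written this step fails.

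Your handling of the case $S = \emptyset$, on the other hand, is a welcome strengthening. The paper's proof ends with ``$M(A(T''))$ is a restriction of $M$'' and silently assumes a column was actually deleted; if the pruned subtrees contained only $L_R$-leaves one would get $T'' = T$ and no contradiction. Your normalisation to a matrix with no zero rows, together with the observation that every leaf of the elimination forest then receives a column child (so every leaf of $T(A)$ is $L_C$-labelled, and hence every pruned $T_{v_i}$ removes at least one column), closes that gap explicitly.
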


\begin{proof}
Let $p:=|\F|$ and let $csd_d^{tree}$ be the CMSO sentence from Lemma~\ref{lem:csd_tree}.
Let $N$ be the number obtained by applying Theorem~\ref{thm:reduce} to $d+3$, $m:=dp+2$ and sentence $csd^{tree}_d$; note that $m$ and sentence $csd^{tree}_d$ depend only on $d$ and $\F$.
Assume for contradiction that there is a $\F$-represented matroid as in the assumptions of the lemma for which $|M| > N$. By Corollary~\ref{cor:dual_td} there exists a matrix $A$ of dual tree-depth at most $d$ such that $M(A)$ is isomorphic to $M$. 
%By Lemma~\ref{} we know that $\csd(A) = \csd(M) = d$, and so $\S(A) \models csd_d$.
By Lemma~\ref{lem:csd_tree} we have that $T(A) \models csd_d^{tree}$. 
We set $T:=T(A)$ for the rest of the proof and we note that $A = A(T)$ by Lemma~\ref{lemma:matrix-tree}.
Since $T$ has depth $d+3$, uses $m$ labels, and has size more than $N$, by Theorem~\ref{thm:reduce} there is a node $v$ of $T$ that has nodes $v_1,\ldots,v_M$ among its children such that for the tree $T':= T - (T_{v_1} \cup \ldots \cup T_{v_M})$ we have $T' \models csd_d^{tree}$. 
%Thus, the matroid $M(A(T'))$ has contraction$^*$-depth $d$. 
This means that the matrix $A(T')$ has contraction$^*$-depth $d$ and consequently $\csd(M(A(T'))) = d$. To obtain a contradiction with the assumptions on $M$, it remains to argue that $M(A(T'))$ is a restriction of $M$.
Let $T''$ be the tree obtained from $T$ by removing the leaves of trees $T_{v_1},\ldots , T_{v_M}$ marked with label $L_C$. By repeated application of Lemma~\ref{lem:ignore_rows} we have that $M(A(T''))$ is isomorphic to $M(A(T'))$, and so $\csd(M(A(T''))) = d$. Since $A(T'')$ is obtained from $A = A(T)$ by removing columns, $M(A(T''))$ is a restriction of $M$ with $\csd(M(A(T''))) = d$, a contradiction.
\end{proof}

% Since $T(A)$ has depth $d+2$, uses $m$ labels, and has size more than $N$, by Theorem~\ref{thm:reduce} there is a node $v$ of $T(A)$ such that for the tree $T':=T(A) - T(A)_v$ we have $T' \models csd_d^{tree}$. 
% %Thus, the matroid $M(A(T'))$ has contraction$^*$-depth $d$. 
% This means that the matrix $A(T')$ has contraction$^*$-depth $d$ and consequently $\csd(M(A(T'))) = d$. To obtain a contradiction with the assumptions on $M$, it remains to argue that $M(A(T'))$ is a restriction of $M$.
% Let $T''$ be the tree obtained from $T(A)$ by removing the leaves of $T(A)_v$ marked with label $L_C$. By Lemma~\ref{lem:ignore_rows} we have that $M(A(T''))$ is isomorphic to $M(A(T'))$, and so $\csd(M(A(T''))) = d$. Since $A(T'')$ is obtained from $A = A(T)$ by removing columns, $M(A(T''))$ is a restriction of $M$ with $\csd(M(A(T''))) = d$, a contradiction.

\begin{proof}[Proof of Theorem~\ref{thm:main1}]
    Let $M$ be a $\F$-represented matroid with $\csd (M) = d$ and $\csd (M \setminus e) < d$ for every $e \in M$. Let $S$ be any non-empty set of elements of $M$. Consider any $e \in S$. Then $M \setminus S$ is a restriction of $M \setminus e$. By our assumption on $M$ we have that $\csd(M\setminus e) < d$. Since contraction$^*$-depth is closed under deletion, we have $\csd(M\setminus S) \le \csd(M\setminus e)$ and so $\csd(M\setminus S) < d$. Thus, by Lemma~\ref{lem:main1} we have $|M| < f(|\FF|,d)$.
\end{proof}

\section{Deletion$^*$-depth}

In this section we define a new depth parameter for representable matroids called the \emph{deletion$^{*}$-depth}, and investigate its properties.

\begin{definition}
\label{def:dsd}  
Let $M = M(A)$ be a represented matroid and let $h$ be the number of rows of $A$. The \emph{deletion$^*$-depth} $\dsd(M)$ of $M$ is defined recursively as:
\begin{itemize}
    \item If $r(M) = |M|$, then $\dsd(M) = 0$. 
    \item If $M$ is not connected, then the deletion$^*$-depth of $M$ is the maximum deletion$^*$-depth of a component of $M$.
    \item If $M$ is connected, then $\dsd(M) = 1 + \min \dsd(M(A \oplus \ve{v^\top}))$, where the minimum is taken over all $\ve{v} \in \F^h$ and $\oplus$ is the operation that adds a row to $A$.
\end{itemize}
\end{definition}

% Our main goal in this section will be establish basic properties of deletion$^*$-depth that certify that this notion is a natural counterpart of contraction$^*$-depth. Namely, we will prove that:

We claim that deletion$^*$-depth is a natural counterpart of contraction$^*$-depth. We base this on the following:
\begin{itemize}
    \item For every represented matroid $M$ we have $\csd(M) = \dsd(M^*)$. This is the core technical result of this section.
    \item Deletion$^*$-depth is functionally equivalent to deletion-depth. More precisely, we will show that for any represented matroid we have $\dsd(M) \le \dd(M) \le f(\dsd(M))$ for some function $f$. This follows easily from the analogous result for $\cd$ and $\csd$ and duality (see Theorem~\ref{thm:dd_dsd_fequivalent}).
\end{itemize}

Before we proceed with proving that $\csd(M) = \dsd(M^*)$, let us briefly comment on the relationship between the operation of deletion (that in the case of represented matroids deletes a column of $A$) and operation $\oplus$ used in the definition of deletion$^*$-depth. These two operations are quite different, and it may seem counterintuitive that $\oplus$ should generalize the deletion operation (so that we get $\dsd(M) \le \dd(M)$). However, we can simulate deletion of a column by $\oplus$ as follows. Let $\ve{u}$ be the column of $A$ that we wish to remove; say $\ve{u}$ is the $i$-th column of $A$. Consider the vector $\ve{e_i} \in \F^n$ that has $0$ everywhere except for the $i$-th position, where it has $1$. It is easily seen that $M(A \oplus \ve{e_i})\setminus \ve{u'}$ is isomorphic to $M(A)\setminus \ve{u}$, where $\ve{u'}$ is the $i$-th column of $A \oplus \ve{e_i}$. Moreover, note that even though the column $\ve{u'}$ is not removed from the matrix $A \oplus \ve{e_i}$, it is now disconnected from the rest of the matroid, and so from this point on it will be treated separately by the definition of deletion$^*$-depth.

% \begin{lemma}
%    For every represented matroid $M=M(A)$ we have $\dsd(M) \le \dd(M)$. 
% \end{lemma}
% \begin{proof}
% First, we will argue that for every represented matroid $M(A)$ we have $\dsd(M(A)) \le \dd(M(A))$.  Here we have to address the issue that the `standard' definition of deletion of an element $e$ from matroid $M$ removes $e$ from the ground set of $M$. For represented matroids, this would correspond to removing a column 
%  $\ve{u}$ of $A$. However, for the operation deletion$^{*}$ the new matrix $A \oplus v^\top$ has the same number of columns as $A$. We can, however, simulate deletion of a column by deletion$^{*}$ as follows. Let $\ve{u}$ be the column of $A$ that corresponds to the element we wish to remove. Say $\ve{u}$ is the $i$-th column of $A$. Consider the vector $\ve{e_i} \in \F^n$ that has $0$ everywhere except for the $i$-th position where it has $1$. It is easily seen that $M(A \oplus \ve{e_i})\setminus \ve{u'}$ is isomorphic to $M(A)\setminus \ve{u}$, where $\ve{u'}$ is the $i$-th column of $A \oplus \ve{e_i}$. Moreover, $\ve{u'}$ forms a connected component of $M(A \oplus \ve{e_i})$ and $M(A \oplus \ve{e_i})[\ve{u'}]$ has size $1$ and rank $1$, so it has deletion$^*$-depth $0$.
%  \end{proof}

\subsection{Interlude -- matroids via linear subspaces}

In what follows, it will be convenient to think of $\F$-representable matroids with ground set of size $n$ as of linear subspaces of $\F^n$. To be more explicit, if $A$ is a matrix representing a matroid $M$, we will associate with $M$ the linear subspace $W = \Span(\rows(A))$ of $\F^n$. This correspondence between $\F$-representable matroids with $n$ elements and subspaces of $\F^n$ works also in the other direction -- to each subspace $W$ of $\F^n$ we can meaningfully associate matroid $M(W)$, as we show below. We will need the following lemma.

\begin{lemma}
\label{lem:basis_independent}
    Let $W$ be a subspace of $\F^n$. Let $\ve {v_1},\ldots, \ve {v_k}$ and $\ve {u_1},\ldots, \ve {u_k}$ be two bases of $W$. Let $A$ be the matrix with rows $\vev_1^{\top},\ldots, \ve {v_k}^{\top}$  and $A'$ be the matrix with rows $\ve {u_1}^{\top},\ldots, \ve {u_k}^{\top}$. Then $M(A)$ is isomorphic to $M(A')$.
\end{lemma}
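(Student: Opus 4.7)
The plan is to show something slightly stronger than the statement: namely, that $M(A)$ and $M(A')$ are literally equal as matroids on the common ground set indexed by the $n$ columns, with the identity map on columns serving as the isomorphism.

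First I would observe that both $A$ and $A'$ are $k \times n$ matrices whose rows form bases of the same $k$-dimensional subspace $W \subseteq \F^n$. Standard linear algebra then gives a unique invertible $k \times k$ change-of-basis matrix $P$ over $\F$ with the property that $\ve{u_i} = \sum_{j=1}^{k} P_{ij} \ve{v_j}$ for every $i$. In matrix form this is exactly $A' = PA$.

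Next, given any subset $S \subseteq \{1,\ldots,n\}$ of column indices, let $A_S$ and $A'_S$ denote the corresponding $k \times |S|$ submatrices of $A$ and $A'$. From $A' = PA$ we immediately get $A'_S = P \cdot A_S$. Since $P$ is invertible, left-multiplication by $P$ is an $\F$-linear isomorphism of $\F^k$, so it preserves linear independence of tuples of vectors. Hence the columns of $A_S$ are linearly independent over $\F$ if and only if the columns of $A'_S$ are. By the definition of the vector matroid, this means that a set of columns is independent in $M(A)$ precisely when the corresponding set (under the identity indexing) is independent in $M(A')$, so $M(A) = M(A')$ and in particular the two are isomorphic.

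There is no real obstacle here; the only thing to get right is the direction of the change-of-basis equality $A' = PA$ (rows, not columns, are being re-expressed), after which the argument is a one-line rank-preservation observation. The same proof would also work verbatim if the two bases had different sizes, since then $W$ would have two different dimensions, which is impossible; so the hypothesis that both bases have the same size $k$ is automatic and need not be assumed separately.
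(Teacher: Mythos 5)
Your proof is correct and is essentially the same argument the paper gives: the paper observes that $A'$ is obtained from $A$ by a sequence of elementary row operations (equivalently, $A' = PA$ for an invertible $P$) and that each such operation preserves the matroid, while you combine these steps by directly invoking the invertible change-of-basis matrix and the fact that left-multiplication by an invertible matrix preserves linear independence of column tuples. Yours is a slightly more streamlined packaging of the same idea.
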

\begin{proof}
The transformation of $A$ into $A'$ can be obtained by a sequence of elementary row operations. It is easy to check that each elementary row operation preserves the matroid structure.
\end{proof}

Next, for any linear subspace $W$ of $\F^n$ we define matroid $M(W)$. The definition of $M(W)$ relies on using a basis of $W$, but by Lemma~\ref{lem:basis_independent} the matroid $M(W)$ itself is independent of the choice of basis.
\begin{definition}
    Let $W$ be a linear subspace of $\F^n$. We define the matroid $M(W)$ as follows. Let $\ve {v_1},\ldots, \ve {v_k}$ be a basis of $W$. Let $A$ be a matrix with rows $\ve {v_1}^{\top},\ldots, \ve{v_k}^{\top}$. Then $M(W) := M(A)$.
\end{definition}

Next we show that the dual of $M(W)$ has a very natural representation in terms of subspaces -- it is represented by $W^\bot$, the subspace of $\F^n$ orthogonal to $W$. 

\begin{lemma}
\label{lem:dual_subspaces}
    For any subspace $W$ of $\F^n$, we have $M(W)^* \cong M(W^\bot)$
\end{lemma}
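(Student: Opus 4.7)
The plan is to exploit the projection characterization of independence in matroids represented by linear subspaces, and then to combine it with the identity $(W^\bot)^\bot = W$ already recorded in Section~\ref{sec:linalg}. First I would establish the following intermediate lemma: for any subspace $W \subseteq \F^n$ and any $I \subseteq [n]$, the set $I$ is independent in $M(W)$ if and only if the coordinate projection $\pi_I \colon W \to \F^I$ is surjective. This is proved by taking a basis of $W$ as the rows of a matrix $A$; then the columns of $A$ indexed by $I$ are linearly independent iff the linear map $\F^{\dim W} \to \F^I$ sending $x$ to $(x \cdot A_{\cdot, i})_{i \in I}$ is surjective, and under the natural identification $\F^{\dim W} \cong W$ given by $x \mapsto x^\top A$ this map is precisely $\pi_I$.

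From the lemma I obtain the rank formula $r_{M(W)}(S) = \dim \pi_S(W) = \dim W - \dim(W \cap \F^{[n]\setminus S})$, where I identify $\F^J$ with the subspace of $\F^n$ supported on $J$. In particular, $[n] \setminus I$ contains a basis of $M(W)$ iff $\pi_{[n]\setminus I}\big|_W$ is injective iff $W \cap \F^I = \{\vezero\}$.

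The key technical step will be to show that for any $I \subseteq [n]$ the projection $\pi_I(W^\bot) \subseteq \F^I$ is the orthogonal complement, taken inside $\F^I$, of $W \cap \F^I$. The verification is short: for $v \in \F^I$ with extension-by-zero $\tilde v \in \F^n$, and for any $w \in \F^n$, one has $v \cdot \pi_I(w) = \tilde v \cdot w$; therefore $v$ annihilates $\pi_I(W^\bot)$ iff $\tilde v \in (W^\bot)^\bot = W$ iff $\tilde v \in W \cap \F^I$. The consequence I need is the corollary that $\pi_I(W^\bot) = \F^I$ if and only if $W \cap \F^I = \{\vezero\}$.

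With these pieces in place the proof is a chain of equivalences: $I$ is independent in $M(W)^*$ iff $[n]\setminus I$ contains a basis of $M(W)$ iff $r_{M(W)}([n]\setminus I) = \dim W$ iff $W \cap \F^I = \{\vezero\}$ iff $\pi_I(W^\bot) = \F^I$ iff $I$ is independent in $M(W^\bot)$, giving the desired isomorphism of matroids on the common ground set $[n]$. The main obstacle I anticipate is the bookkeeping in the key step, namely cleanly distinguishing the intrinsic pairing on $\F^I$ from the ambient pairing on $\F^n$ when identifying the image of $\pi_I$ with an orthogonal complement; once this is handled carefully, everything else is a matter of dimension-counting.
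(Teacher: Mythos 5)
Your proposal is correct, but it takes a genuinely different route from the paper. The paper fixes bases of $W$ and $W^\bot$ as the rows of matrices $A$ and $B$, picks a single basis set $S$ of $M(A)$, and argues by contradiction that $B[\bar S]$ has full column rank: if not, there is a nonzero $\ve{u}\in W^\bot$ supported on $S$, and since $A[S]$ has full rank one can build $\ve{v}\in W$ with a prescribed support on $S$, making $\ve{u}\cdot\ve{v}\neq 0$ and contradicting $\ve{u}\perp\ve{v}$. This is a concrete, basis-by-basis argument (it shows that complements of bases of $M(W)$ are bases of $M(W^\bot)$, with the converse implicitly following from symmetry). Your proof instead isolates the projection characterization of independence ($I$ independent in $M(W)$ iff $\pi_I|_W$ surjective), derives the rank formula $r_{M(W)}(S)=\dim W-\dim(W\cap\F^{[n]\setminus S})$, and establishes the clean duality $(\pi_I(W^\bot))^\bot=W\cap\F^I$ inside $\F^I$; the lemma then drops out as a chain of equivalences applied to an arbitrary $I$, directly matching the independent sets of $M(W)^*$ and $M(W^\bot)$. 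Your route is more conceptual and handles both inclusions at once; the paper's is more elementary and concrete. Both are valid, and your key step ($(\pi_I(W^\bot))^\bot=W\cap\F^I$) is exactly the abstract principle underlying the paper's explicit orthogonality contradiction, so the two proofs are morally parallel even though the organization is quite different.
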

\begin{proof}
    Let $\ve {w_1},\ldots, 
    \ve {w_k}$ be a basis of $W$ and let $\ve {w_{k+1}},\ldots, \ve{w_n}$ be a basis of $W^\bot$.   
Let $A$ be the matrix with rows $\ve {w_1}^\top,\ldots, \ve{w_k}^\top$ and let $B$ be a matrix with rows $\ve {w_{k+1}}^\top,\ldots, \ve{w_n}^\top$.
 Then we have $M(W) = M(A)$ and our goal is to prove that $M(W)^* = M(B)$. Note that since the row rank of $A$ is $k$, the column rank of $A$ is also $k$, and hence $r(M(W)) = r(M(A))= k$. Also note that $r(B) = n-k$.

In what follows we will index columns of $A$ and $B$ by numbers from $[n]$. This sets up a natural correspondence between the columns of $A$ and $B$: the $i$-th column of $A$ corresponds to the $i$-th column of $B$. We will use the following notation: if  $U \subset [n]$ and $C$ is a matrix that has as its rows transposed vectors from $\F^n$, then by $C[U]$ we mean the submatrix of $C$ given by the columns with indexes from $U$.
In particular, if $U = \{i\}$ is a singleton and $\ve{u} \in F^n$, then $\ve{u}^\top[\{i\}]$ is the $i$-th entry of $\ve{u}$. In this case we will write $\ve{u}^\top[i]$ instead of  $\ve{u}^\top[\{i\}]$

Let $S \subseteq [n]$ be a set such that the columns of $A[S]$ form a basis of $M(A)$ (equivalently, a basis of $\F^k$).
 Let $\bar{S}$ be the complement of $S$ in $[n]$. Our goal is to show that $B[\bar{S}]$ has rank $n-k$; this will mean that the columns of $B[\bar{S}]$ form a basis of n $M(W^\bot)$, which will confirm that $M(W)^* = M(B)$. 
Assume for contradiction that $B[\bar{S}]$ has rank strictly less than $n-k$. Since $B[\bar{S}]$ has $n-k$ rows, this means that the set of rows of $B[\bar{S}]$ is linearly dependent. 
Therefore, there exists a vector $\ve{u} \in \F^n$ such that $\ve{u}^\top \in \Span(\rows(B))$ and $\ve{u}^\top[\bar{S}] = \ve{0}$. Moreover, we know that $\ve{u}\not=\ve{0}$, since $\ve{u}^\top \in \Span(\rows(B))$ and $B$ has rank $n-k$ and $n-k$ rows. Therefore, there exists  $i \in S$ such that $\ve{u}^\top[i] = \alpha$ with $\alpha \not= 0$. Let $\ve{v}^\top$ be a vector in $\Span(\rows(A))$ such that $\ve{v}^\top[i] = 1$, $\ve{v}^\top[j] = 0$ for all $j \in S \setminus \{i\}$, and with arbitrary value $\ve{v}^\top[j']$ for any $j' \in \bar{S}$; such  $i \in S$ exists since $A[S]$ is of full rank. Then we have that $\ve{u} \cdot \ve{v} = \alpha$. This is a contradiction, since from $\ve{v} \in W$ and $\ve{u} \in W^{\bot}$ we have that $\ve{u} \cdot \ve{v} = 0$.
\end{proof}

For a subspace $W$ of $\F^n$, there are two very natural operations -- `adding' a vector $\ve{v} \in \F^n$ to $W$ (this can be realized as replacing $W$ with $\Span(W \cup \ve{v})$) and `removing' a vector from $W$ (this can be realized as replacing $W$ by $W \cap \Span(\ve{v})^\bot$). It can be easily checked that these two operations are dual to each other -- see also below.
Later we will see that if we consider the matroid $M(W)$, these operations correspond to deletion$^*$ and contraction$^*$, respectively.

\begin{definition}[Contraction$^\bullet$ on a subspace]
Let $W$ be a subspace of $\F^n$ and $\ve{v} \in \F^n$. We define $M(W)/^\bullet \ve{v} = M(W \cap \Span(\ve{v})^\bot)$.
%    For $W \subseteq \F^n$ and $v \in W$ we define $W /^\square v:=W \cap span({v})^\bot$.
\end{definition}

\begin{definition}[Deletion$^\bullet$ on a subspace]
Let $W$ be a subspace of $\F^n$ and $\ve{v} \in \F^n$. We define $M(W)\setminus^{\hspace{-0.2em}\bullet} \ve{v} = M(\Span(W \cup \{\ve{v}\}))$.
%    For $W \subseteq \F^n$ and $v \in W^\bot$ we define $W \setminus^\square v:=span(W \cup \{v\})$.
\end{definition}

A simple formal argument shows that these two operations are dual to each other.
\begin{theorem}
\label{thm:cb_db_dual}
    Contraction$^\bullet$ is dual to deletion$^\bullet$, i.e. for any matroid $M(W)$ and any vector $\ve{v}$ we have
    $(M(W)/^\bullet \ve{v})^* = M(W)^*\setminus^{\hspace{-0.2em}\bullet} \ve{v}$.
    %Contraction$^\square$ is dual to deletion$^\square$, i.e. 
    %$(W/^\square v)^\perp = W^\perp \setminus^\square v$.
\end{theorem}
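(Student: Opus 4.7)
The plan is to reduce the claim to the two algebraic identities on orthogonal complements recorded in Section~\ref{sec:linalg}, namely $(U \cap W)^\bot = \Span(U^\bot \cup W^\bot)$ and $(W^\bot)^\bot = W$, combined with the representation of matroid duality via orthogonal complements provided by Lemma~\ref{lem:dual_subspaces}.

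First I would unfold both sides of the equation in terms of the defining subspaces. By the definition of contraction$^\bullet$, the left-hand side is
\[
(M(W)/^\bullet \ve{v})^* = M(W \cap \Span(\ve{v})^\bot)^*,
\]
and by Lemma~\ref{lem:dual_subspaces} this equals $M\bigl((W \cap \Span(\ve{v})^\bot)^\bot\bigr)$. On the right-hand side, Lemma~\ref{lem:dual_subspaces} gives $M(W)^* = M(W^\bot)$, and then the definition of deletion$^\bullet$ yields
\[
M(W)^* \setminus^{\hspace{-0.2em}\bullet} \ve{v} = M(W^\bot) \setminus^{\hspace{-0.2em}\bullet} \ve{v} = M\bigl(\Span(W^\bot \cup \{\ve{v}\})\bigr).
\]

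Thus the theorem reduces to the subspace identity
\[
(W \cap \Span(\ve{v})^\bot)^\bot = \Span(W^\bot \cup \{\ve{v}\}).
\]
Applying $(U \cap W')^\bot = \Span(U^\bot \cup W'^\bot)$ with $U = W$ and $W' = \Span(\ve{v})^\bot$ transforms the left-hand side into $\Span(W^\bot \cup \Span(\ve{v})^{\bot\bot})$, and then $(W^\bot)^\bot = W$ turns $\Span(\ve{v})^{\bot\bot}$ back into $\Span(\ve{v})$, yielding exactly $\Span(W^\bot \cup \{\ve{v}\})$.

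There is no real obstacle here; the content of the theorem is entirely absorbed by the duality lemma for $M(W)$ and the two basic identities on orthogonal complements already recorded in the preliminaries. The only subtlety worth making explicit is that once both sides are rewritten as $M(\cdot)$ of equal subspaces, the matroid equality is immediate from the definition of $M(\cdot)$ (which, by Lemma~\ref{lem:basis_independent}, depends only on the subspace and not on the chosen basis).
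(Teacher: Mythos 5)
Your proof is correct and follows essentially the same route as the paper: both unfold $/^\bullet$ and $\setminus^{\hspace{-0.2em}\bullet}$ to their defining subspaces, apply Lemma~\ref{lem:dual_subspaces} to convert matroid duality to orthogonal complement, and then finish with the identities $(U \cap W')^\bot = \Span(U^\bot \cup W'^\bot)$ and $(W^\bot)^\bot = W$. The only difference is cosmetic: you isolate the subspace identity as a standalone reduction target before verifying it, whereas the paper verifies it first and then chains the equalities.
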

\begin{proof}
We have 
    $$(W \cap \Span(\ve{v})^\bot)^\perp = \Span(W^\bot \cup \Span(\ve{v})^{\bot \bot}) = \Span(W^\bot \cup \Span(\ve{v})) = \Span(W^\bot \cup \{\ve{v}\}).$$

    % $(W/^\square \ve{v})^\perp = (W \cap span(\ve{v})^\bot)^\perp = span(W^\bot \cup span(\ve{v})^{\bot \bot}) = span(W^\bot \cup span(\ve{v})) = span(W^\bot \cup \{\ve{v}\}) =  W^\perp \setminus^\square \ve{v}.$
    
Using this and Lemma~\ref{lem:dual_subspaces} we have that
% \begin{align*}
%     &(M(W)/^\square \ve{v})^* = M(W \cap span(\ve{v})^\bot)^* = M((W \cap span(\ve{v})^\bot)^\bot) = M(span(W^\bot \cup \{\ve{v}\})) = \\& M(W^\bot)\setminus^\square \ve{v} = M(W)^*\setminus^\square \ve{v}   
%      \end{align*}

$$   (M(W)/^\bullet \ve{v})^* = M(W \cap \Span(\ve{v})^\bot)^* = M((W \cap \Span(\ve{v})^\bot)^\bot) = M(\Span(W^\bot \cup \{\ve{v}\})) = $$
    $$ \pushQED{\qed} 
    = M(W^\bot)\setminus^{\hspace{-0.2em}\bullet} \ve{v} = M(W)^*\setminus^{\hspace{-0.2em}\bullet} \ve{v}   
    \qedhere
    \popQED
    $$    
\renewcommand{\qedsymbol}{}
\end{proof}

Let $M=M(W)$ be a matroid given by a subspace $W$ of $\F^n$.
We can now define two auxiliary depth parameters $\cbd(M)$ and $\dbd(M)$ as follows. 
We set $\cbd(M) := 0$ if $r(M) =0$. If $M$ is disconnected, then $\cbd(M)$ is the maximum $\cbd$ of any connected component of $M$. If $M$ is connected and $r(M) > 0$, then $\cbd(M)$ is the minimum of $\cbd(M/^\bullet \ve{v})$ over all $\ve{v} \in \F^n$.
For $\dbd$ we set $\dbd(M) := 0$ if $r(M) =n$. If $M$ is disconnected, then $\dbd(M)$ is the maximum $\dbd$ of any connected component of $M$. If $M$ is connected and $r(M) > 0$, then $\dbd(M)$ is the minimum of $\dbd(M \setminus ^{\hspace{-0.2em}\bullet} \ve{v})$ over all $\ve{v} \in \F^n$.

From Theorem~\ref{thm:cb_db_dual} and Lemma~\ref{lem:dual_subspaces}  we easily get the following.
\begin{lemma}
\label{lem:cbd_dbd_dual}
    For any matroid $M= M(W)$, we have $\cbd(M) = \dbd(M^*)$ and $\cbd(M^*) = \dbd(M)$.
\end{lemma}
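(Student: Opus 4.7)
The plan is to prove the first equality $\cbd(M) = \dbd(M^*)$ by induction on the pair $(|M|, r(M))$ ordered lexicographically; the second equality follows by applying the first to $M^*$ and using $(W^\bot)^\bot = W$, which yields $M^{**} \cong M$ in the subspace representation.

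For the base case $r(M) = 0$ I would argue as follows: by definition $\cbd(M) = 0$, while Lemma~\ref{lem:dual_subspaces} gives $M^* \cong M(W^\bot)$ with $r(M^*) = \dim(W^\bot) = n$, so $\dbd(M^*) = 0$ also directly from its base clause. Next, if $M$ is disconnected with components $M_1, \dots, M_k$, then Lemma~\ref{lem:dual_CCs} says the components of $M^*$ are supported on the same sets, and combining this with the standard fact that matroid duality distributes over direct sums, the components of $M^*$ are precisely $M_1^*, \dots, M_k^*$. Each $|M_i| < |M|$, so the inductive hypothesis gives $\cbd(M_i) = \dbd(M_i^*)$, and taking the maximum over $i$ yields $\cbd(M) = \dbd(M^*)$.

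For the remaining case, $M$ is connected with $r(M) > 0$. Then by Lemma~\ref{lem:dual_CCs} $M^*$ is connected as well, and $r(M^*) = n - r(M) < n$, so both $\cbd(M)$ and $\dbd(M^*)$ are given by their recursive minima over all $\ve{v} \in \F^n$. Theorem~\ref{thm:cb_db_dual} supplies the key identification $(M /^\bullet \ve{v})^* = M^* \setminus^{\hspace{-0.2em}\bullet} \ve{v}$, so matching the two minima term by term reduces to proving $\cbd(M /^\bullet \ve{v}) = \dbd(M^* \setminus^{\hspace{-0.2em}\bullet} \ve{v})$. For any $\ve{v} \notin W^\bot$ we have $\dim(W \cap \Span(\ve{v})^\bot) = \dim(W) - 1$, hence $r(M /^\bullet \ve{v}) = r(M) - 1$; the inductive hypothesis applies since the pair $(|M|, r(M))$ strictly decreases in its second coordinate, delivering exactly the required identity. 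For the remaining vectors $\ve{v} \in W^\bot$ one checks that $W \cap \Span(\ve{v})^\bot = W$ and $\Span(W^\bot \cup \{\ve{v}\}) = W^\bot$, so $M /^\bullet \ve{v} = M$ and $M^* \setminus^{\hspace{-0.2em}\bullet} \ve{v} = M^*$; such $\ve{v}$ cannot realise the minimum on either side (otherwise we would get the absurdity $\cbd(M) > \cbd(M)$ on one side, and likewise on the other), so these degenerate choices can be dropped from both minima without changing their values.

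The main obstacle I anticipate is essentially bookkeeping rather than a conceptual difficulty: one must carefully justify that the dual of a component coincides with the component of the dual, so that Lemma~\ref{lem:dual_CCs} cleanly transports the recursion across duality in the disconnected case. The connected case is, by contrast, a direct translation of Theorem~\ref{thm:cb_db_dual} combined with the elementary observation that only rank-reducing choices of $\ve{v}$ matter in the recursion.
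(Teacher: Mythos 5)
Your proof is correct and fills in exactly the inductive argument that the paper leaves implicit: the paper merely says Lemma~\ref{lem:cbd_dbd_dual} ``easily'' follows from Theorem~\ref{thm:cb_db_dual} and Lemma~\ref{lem:dual_subspaces}, and the lexicographic induction on $(|M|, r(M))$, splitting into the rank-zero base case, the disconnected case via Lemma~\ref{lem:dual_CCs}, and the connected case via Theorem~\ref{thm:cb_db_dual}, is the natural realization of that claim. Your observation that vectors $\ve{v}\in W^\bot$ are degenerate and can be excluded from both minima is a genuine subtlety that the paper glosses over but that is needed for the recursion (and hence the induction) to be well-founded, so including it strengthens rather than deviates from the intended argument.
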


For a represented matroid $M = M(A)$ we set $\cbd(M) := \cbd(\Span(\rows(A)))$ and $\dbd(M) := \dbd(\Span(\rows(A)))$. Then of course the results from Lemma~\ref{lem:cbd_dbd_dual} apply to represented matroids.

Before we proceed further, let us briefly comment on how the operations $/^\bullet$ and $\setminus^{\hspace{-0.2em}\bullet}$ are realized in the case of represented matroids. If $M = M(A)$ is a represented matroid, we can obtain a representation for $M\setminus^{\hspace{-0.2em}\bullet} \ve{v}$ by simply adding the row $\ve{v}^\top$ to $A$ to obtain matrix $A'$. It is easily seen that $M\setminus^{\hspace{-0.2em}\bullet} \ve{v} = M(A')$, because if $W = \Span(\rows(A))$, then $\Span(W \cup \{\ve{v}\}) = \Span(\rows(A) \cup \{\ve{v}\}) = \Span(\rows(A'))$. For the operation $/^\bullet$, we can obtain a representation of $M/^\bullet \ve{v}$ as follows. We consider two cases. First, if $\ve{v} \in \Span(\rows(A))$, then we pick an orthogonal basis $\ve{v_1},\ldots,\ve{v_k}$ of $\Span(\rows(A))$ such that $\ve{v_1} = \ve{v}$ and consider matrix $A'$ consisting of rows $\ve{v_2},\ldots,\ve{v_k}$. Then one easily checks that for every $\ve{w}$ we have $\ve{w} \in \Span(\rows(A)) \cap \ve{v}^\bot$ if and only if $\ve{w} \in \Span(\ve{v_2},\ldots,\ve{v_k})$, and so $A'$ is a representation for $M\setminus^{\hspace{-0.2em}\bullet} \ve{v}$. Second, if $\ve{v} \not\in \Span(\rows(A))$, we can write $\ve{v}$ uniquely as $\ve{v'} + \ve{v''}$, where $\ve{v'} \in \Span(\rows(A))$ and $\ve{v''} \in \Span(\rows(A))^\bot$.
The we have $\Span(\rows(A)) \cap \ve{v}^\bot = \Span(\rows(A)) \cap \ve{v'}^\bot$ (see Section~\ref{sec:linalg}) and we can proceed as before with $\ve{v_1} = \ve{v'}$.

%\subsection{Showing that $\dsd(M) = \csd(M^*)$}
\subsection{Duality of deletion$^*$-depth and contraction$^*$-depth}

In this section we prove Theorem~\ref{thm:csd_dsd_dual}, stating that for every represented matroid $M=M(A)$ we have $\csd(M) = \dsd(M^*)$. To this end, we will show that for every represented matroid we have $\csd(M) = \cbd(M)$ and $\dsd(M) = \dbd(M)$. Since we know from the previous section that $\cbd(M) = \dbd(M^*)$, we will then get $\csd(M) = \cbd(M) = \dbd(M^*) = \dsd(M^*)$, as desired.

Next, we show that the contraction$^*$ operation is the same as the newly defined operation of contraction$^\bullet$ on a subspace. In the following lemma we will assume that a represented matroid $M$ we are working with is represented by a matrix of full rank. One easily checks that that every representable matroid can be represented in this way.

\begin{lemma}
\label{lem:cs_cb_equal}
    Let $A \in \F^{k\times n}$ be a matrix of rank $k$ and $W = \Span(\rows(A))$. Let $M = M(A)$. For every $\ve{u} \in \F^k$ there exists $\ve{v} \in  \F^n$ such that $M/\ve{u} = M/^\bullet \ve{v}$ and conversely, for every $\ve{v} \in \F^n$ there exists $\ve{u} \in \F^k$ such that $M/\ve{u} = M/^\bullet \ve{v}$.
\end{lemma}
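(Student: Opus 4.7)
The plan is to translate the dependence conditions in both $M/\ve{u}$ and $M/^\bullet \ve{v}$ into statements about supports of vectors $\ve{\beta} \in \F^n$, and then read off the correspondence from the equation $\ve{u} = A\ve{v}$, using that the columns of $A$ span $\F^k$ by the rank assumption.

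First, I will identify the ground sets of both matroids with the index set $\{1, \ldots, n\}$ in the natural way (columns of $A$ on one side, columns of the basis matrix of $W \cap \Span(\ve{v})^\bot$ on the other). Unpacking the definition, a set $S \subseteq \{1,\ldots,n\}$ is dependent in $M/\ve{u}$ if and only if there exists a non-zero $\ve{\beta} \in \F^n$ supported on $S$ such that $A\ve{\beta} \in \Span(\ve{u})$. For $M/^\bullet \ve{v} = M(W \cap \Span(\ve{v})^\bot)$, choosing any basis of $W \cap \Span(\ve{v})^\bot$ and arranging it as rows of a matrix $B$, Lemma~\ref{lem:basis_independent} tells us that the matroid $M(B)$ does not depend on the choice; moreover, a set $S$ is dependent in $M(B)$ if and only if there is a non-zero $\ve{\beta}$ supported on $S$ with $B\ve{\beta} = \ve{0}$, i.e.\ $\ve{\beta} \in \ker B = (W \cap \Span(\ve{v})^\bot)^\bot$. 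Using the identities recalled in Section~\ref{sec:linalg}, this orthogonal complement equals $W^\bot + \Span(\ve{v})$.

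The key step is the following calculation. Suppose $\ve{u} = A\ve{v}$. Then $A\ve{\beta} \in \Span(\ve{u})$ is equivalent to the existence of $\gamma \in \F$ with $A\ve{\beta} = \gamma A\ve{v}$, i.e.\ $A(\ve{\beta} - \gamma \ve{v}) = \ve{0}$. Since $\ker A = (\Span(\rows(A)))^\bot = W^\bot$, this in turn is equivalent to $\ve{\beta} \in \Span(\ve{v}) + W^\bot$. So whenever $\ve{u} = A\ve{v}$, a non-zero $\ve{\beta}$ supported on $S$ satisfies the condition for $M/\ve{u}$ if and only if it satisfies the one for $M/^\bullet \ve{v}$. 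Thus $M/\ve{u}$ and $M/^\bullet \ve{v}$ have the same dependent sets on $\{1,\ldots,n\}$ and so coincide as matroids.

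For the two directions of the lemma, I now only need to verify that the equation $\ve{u} = A\ve{v}$ is solvable in each direction. Given $\ve{u} \in \F^k$, since $A$ has rank $k$, its columns span $\F^k$, so some $\ve{v} \in \F^n$ satisfies $A\ve{v} = \ve{u}$, and the calculation above yields $M/\ve{u} = M/^\bullet \ve{v}$. Conversely, given $\ve{v} \in \F^n$, set $\ve{u} := A\ve{v} \in \F^k$ and apply the same calculation. The only subtlety is the implicit identification of ground sets via column indices, and keeping track of the orthogonal-complement algebra in $\F^n$; neither is a real obstacle, and the full-rank assumption is used exactly once, to ensure surjectivity of $\ve{v} \mapsto A\ve{v}$.
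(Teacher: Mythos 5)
Your proof is correct, and it takes a genuinely different and cleaner route than the paper. The paper works with explicit change-of-basis matrices: it extends $\ve{u}$ to a basis of $\F^k$, forms the invertible matrix $T$ with first column $\ve{u}$, sets $A' = T^{-1}A$, and tracks how the dependence condition transforms when passing from $A$ to $A'$ and then to the matrix $A''$ obtained by deleting the first row of $A'$. For the converse direction it does the same starting from an orthogonal basis of $W$ with first vector $\ve{v}$. Your proof instead collapses all of this to a single observation: a set $S$ is dependent in $M/\ve{u}$ iff some nonzero $\ve{\beta}$ supported on $S$ satisfies $A\ve{\beta}\in\Span(\ve{u})$, and dependent in $M/^{\bullet}\ve{v}$ iff some such $\ve{\beta}$ lies in $(W\cap\Span(\ve{v})^{\bot})^{\bot}=W^{\bot}+\Span(\ve{v})$; under $\ve{u}=A\ve{v}$ these two conditions coincide because $\ker A = W^{\bot}$. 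The correspondence between $\ve{u}$ and $\ve{v}$ is then just solvability of $A\ve{v}=\ve{u}$, which holds in both directions since $A$ has full row rank. What your approach buys is that it avoids two ingredients the paper relies on that are delicate over finite fields: Gram--Schmidt orthogonalization (which can fail when $\ve{v}\cdot\ve{v}=0$) and the direct-sum decomposition $\F^n = W\oplus W^{\bot}$ (which also fails when $W$ contains isotropic vectors). The only orthogonal-complement facts you use, namely $\ker A = \Span(\rows(A))^{\bot}$, $(W^{\bot})^{\bot}=W$, and $(U\cap W)^{\bot}=U^{\bot}+W^{\bot}$, all hold over arbitrary finite fields, so your argument is both shorter and more robust in exactly the setting the paper cares about. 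One tiny point worth spelling out if you write this up: after writing $\ve{\beta}=\gamma\ve{v}+\ve{w}$ with $\ve{w}\in W^{\bot}$, note explicitly that $A\ve{\beta}=\gamma A\ve{v}=\gamma\ve{u}$, so the equivalence $A\ve{\beta}\in\Span(\ve{u})\Leftrightarrow\ve{\beta}\in\Span(\ve{v})+W^{\bot}$ really is an iff and not just one implication.
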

\begin{proof}
Fist note that it is enough to consider the situations when  $\ve{v} \in W$.  This is because if $\ve{v} \not\in W$, then we can write $\ve{v}$ as $\ve{v'} + \ve{v''}$, where $\ve{v'} \in W$ and $\ve{v''} \in W^\bot$. But then we have $M/^\bullet \ve{v} = M/^\bullet\ve{v'}$ (see the discussion at the end of the previous section) and so it is enough to consider $\ve{v'} \in W$ instead of $\ve{v}$.

We will now proceed with the proof of the lemma assuming that  $\ve{v} \in W$. The proof will be based on two observations.

Observation 1: 
 %   Let $\ve{u_1},\ldots, \ve{u_\ell}$ be an orthogonal basis of $span(columns(A))$. %such that $\ve{u_1} = \ve{u}$. 
 %   Let $T$ be the $k\times \ell$ matrix with columns $\ve{u_1},\ldots, \ve{u_\ell}$. 
 Let $T$ be a $k \times k$ matrix of rank $k$. Further, let
  $\ve{u_1}$ be the first column of $T$ and set $A' := T^{-1}A$.
Let $S = \{\ve{w_1},\ldots,\ve{w_m}\}$ be a set of columns of $A$ and let $S' = \{\ve{w'_1},\ldots,\ve{w'_m}\}$ be the corresponding set of columns of $A'$ (so we have $\ve{w_i'} = T^{-1}\ve{w_i}$). 
We claim that for every $\alpha_1,\ldots,\alpha_m \in \F$ we have
\begin{equation}
\label{eq:aa}
 \alpha_1 \ve{w_1} + \ldots + \alpha_m \ve{w_m} \in \Span(\ve{u_1}) \Longleftrightarrow \alpha_1 \ve{w_1'} + \ldots + \alpha_m \ve{w_m'} \in \Span(\ve{e_1})    
\end{equation}
where $\ve{e_1} = (1,0,\ldots, 0) \in \F^k$.

To see this, assume that $\alpha_1 \ve{w_1} + \ldots + \alpha_m \ve{w_m} = \beta \ve{u_1}$ for some $\beta \in \FF$. Then, by applying $T^{-1}$ to both sides of this equality we get $\alpha_1 \ve{w_1'} + \ldots + \alpha_m \ve{w_m'} = \beta \ve{e_1}$. For the other direction, we assume that $\alpha_1 \ve{w_1'} + \ldots + \alpha_m \ve{w_m'} = \beta \ve{e_1}$ for some $\beta \in \FF$ and apply $T$ to both sides. Then, analogously to the previous case, we get $\alpha_1 \ve{w_1} + \ldots + \alpha_m \ve{w_m} = \beta \ve{u_1}$.

Observation 2: Let $A''$ be the matrix obtained from $A'$ by removing the first row. Let $S' = \{\ve{w_1'},\ldots, \ve{w_m'}\}$ be a set of columns of $A'$ and let $S'' = \{\ve{w_1''},\ldots,\ve{w_m}''\}$ be the corresponding set of columns of $A''$.  Then for every $\alpha_1,\ldots,\alpha_k \in \F$ we have that 
\begin{equation}
\label{eq:aaa}
 \alpha_1 \ve{w_1'} + \ldots + \alpha_m \ve{w_m'} \in \Span(\ve{e_1}) \Longleftrightarrow
 \alpha_1 \ve{w_1''} + \ldots + \alpha_m \ve{w_m''} = \ve{0}     
\end{equation}

We now prove that for every $\ve{u} \in \F^k$ there exists a corresponding $\ve{v} \in W$ as in the statement of the lemma. Let $\ve{u_1},\ldots,\ve{u_k}$ be a basis of $\F^k$ such that $\ve{u_1} = \ve{u}$. Let $T$ be the matrix with columns $\ve{u_1},\ldots,\ve{u_k}$.  Set $A' := T^{-1} A$ and let $A''$ be obtained from $A'$ by removing its first row $\ve{v_1}$.
% Let $T$ be a matrix with columns $\ve{u_1},\ldots, \ve{u_k}$. Set $A':=T^{-1}A$ and let $A''$ be the matrix obtained from $A'$ by removing the first row $v$. 
Let $S =\{\ve{w_1},\ldots, \ve{w_m}\}$ be any set of columns of $A$ and $\alpha_1,\ldots,\alpha_m \in \F$ arbitrary. Then by (\ref{eq:aa}) and (\ref{eq:aaa}) we have that that 
\begin{equation}
\label{eq:aaaa}
 \alpha_1 \ve{w_1} + \ldots + \alpha_m \ve{w_m} \in \Span(\ve{u}) \Longleftrightarrow \alpha_1 \ve{w_1''} + \ldots + \alpha_m \ve{w_m''} = \ve{0}
\end{equation}
and so $A''$ is a representation of $M/\ve{u}$. We now argue that $A''$ is also a representation of $M/^\bullet \ve{v}$ for some $\ve{v} \in W$. By construction it is easy to see that $\Span(\rows(A')) = W$ and 
that  $A'$ has $k$ rows, so after removing its first row $\ve{v_1}$ we know that $W '' = \Span(\rows(A''))$ is a $(k-1)$-dimensional subspace of $W$. Hence we can pick $\ve{v} \in W$ such that $W'' = W \cap \Span(\ve{v})^\bot$. Consequently, we have that $M(A'') = M(W \cap \Span(\ve{v})^\bot) = M/^\bullet\ve{v}$. Thus, we have $M/\ve{u} = M(A'') = M/^\bullet\ve{v}$, as desired.

For the other direction, we prove that for every $\ve{v} \in W$ there exists $\ve{u} \in \F^k$ as in the statement of the lemma. Let $\ve{w_1'},\ldots, \ve{w_k'}$ be an orthogonal basis of $W$ with $\ve{w_1'} = \ve{v}$. Let $A'$ be the matrix with rows $\ve{w_1'}^\top,\ldots, \ve{w_k'}^\top$ and let $A''$ be obtained from $A'$ by removing its first row $\ve{v^\top}$. Then we have that the rows of $A''$ are a basis of $W \cap \Span(\ve{v})^\bot$, and so $M(A'') = M/^{\bullet} \ve{v}$. 
Since $rk(A) = rk(A')$, there exists an invertible matrix $T$ such that $A = TA'$, and so we have $A' = T^{-1}A$.
 Let $\ve{u}$ be the first column of $T$. Let $S'' = \{\ve{w_1''},\ldots, \ve{w_m''}\}$ be a set of vectors of $A''$ and let $S = \{\ve{w_1},\ldots, \ve{w_m}\}$ be the corresponding set of columns of $A$. Let $\alpha_1,\ldots, \alpha_k \in \F$ be arbitrary. By (\ref{eq:aaa}) and (\ref{eq:aa}) we again get~(\ref{eq:aaaa}), and so $A''$ is a representation of $M/\ve{u}$. Thus, we again have $M/\ve{u} = M(A'') = M/^\bullet\ve{v}$, as desired.
    
\end{proof}

As an immediate consequence of Lemma~\ref{lem:cs_cb_equal} we obtain that for every represented matroid $M$ we have $\csd(M)= \cbd(M)$. This finishes the proof of Theorem~\ref{thm:csd_dsd_dual}.

%\todo{maybe make the correspondence between 'moves' more explicit to be able to claim more}

\subsection{Functional equivalence of deletion-depth and deletion$^*$-depth}

\begin{theorem}
\label{thm:dd_dsd_fequivalent}
    There exists a function $f$ such that $\dsd(M) \leq \dd(M) \leq f(\dsd(M))$ for any represented matroid $M$.
\end{theorem}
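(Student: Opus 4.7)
The plan is to reduce this to the already-established functional equivalence between $\cd$ and $\csd$ (Theorem~\ref{thm:cd_csd_fequivalent}) by invoking the two duality identities that this paper establishes, namely $\csd(M) = \dsd(M^*)$ from Theorem~\ref{thm:csd_dsd_dual} and $\cd(M^*) = \dd(M)$ from Lemma~\ref{lem:cd_dd_dual}. The key observation is that both inequalities we need are exactly the ones proved for $\csd$ and $\cd$, just on the dual matroid.

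Concretely, I would proceed as follows. Let $M$ be a represented matroid, and consider its dual $M^*$. Note that $M^*$ is also a represented matroid (this is standard for representable matroids; if $M = M(A)$ then $M^*$ is represented by any matrix whose row space is $\Span(\rows(A))^\bot$, as used implicitly in Lemma~\ref{lem:dual_subspaces}). Applying Theorem~\ref{thm:cd_csd_fequivalent} to $M^*$ yields
\begin{equation*}
    \csd(M^*) \le \cd(M^*) \le f(\csd(M^*))
\end{equation*}
for the function $f$ produced there. Now I substitute using duality. By Theorem~\ref{thm:csd_dsd_dual} applied to $M^*$, we have $\csd(M^*) = \dsd((M^*)^*) = \dsd(M)$, and by Lemma~\ref{lem:cd_dd_dual} we have $\cd(M^*) = \dd(M)$. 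Plugging these into the displayed inequality gives $\dsd(M) \le \dd(M) \le f(\dsd(M))$, which is precisely the claim.

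There is really no hard step here: the whole argument is a one-line reduction via duality, modulo the small bookkeeping that $(M^*)^* = M$ (so that $\csd(M^*) = \dsd(M)$ is the correct substitution, not $\dsd(M^*)$). The function $f$ can be taken to be exactly the one from Theorem~\ref{thm:cd_csd_fequivalent}, so no new quantitative bound needs to be computed. If there were a subtlety to watch for, it would be confirming that Theorem~\ref{thm:cd_csd_fequivalent} applies to $M^*$ in the represented setting, but since $M^*$ is $\mathbb{F}$-representable whenever $M$ is, this is immediate.
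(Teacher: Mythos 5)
Your proof is correct and follows essentially the same route as the paper: apply Theorem~\ref{thm:cd_csd_fequivalent} to $M^*$ and then translate via the two duality identities $\csd(M^*) = \dsd(M)$ and $\cd(M^*) = \dd(M)$. (In fact you are slightly more careful than the paper's own write-up, which has a small typo writing $f(\dsd(M^*))$ where $f(\dsd(M))$ is meant.)
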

\begin{proof}
We have 
    $$ \dsd(M) = \csd(M^*) \le \cd(M^*) = \dd(M)$$
by Theorem~\ref{thm:csd_dsd_dual}, Theorem~\ref{thm:cd_csd_fequivalent} and Lemma~\ref{lem:cd_dd_dual}, respectively. By the same results (in reverse order) we have 
$$ \dd(M) = \cd(M^*) \le f(\csd(M^*)) = f(\dsd(M^*))$$
where $f$ is the function from Theorem~\ref{thm:cd_csd_fequivalent}.
\end{proof}

\subsection{Minimal matroids of bounded deletion$^*$-depth}

% Exactly the same approach then works for Theorem~\ref{thm:main_dsd}, where we replace the deletion-depth by deletion$^*$-depth and use Theorem~\ref{thm:csd_dsd_dual} instead of Lemma~\ref{lem:cd_dd_dual}.
We now use Theorem~\ref{thm:csd_dsd_dual} to prove Theorem~\ref{thm:main_dsd}.
\begin{proof}[Proof of Theorem~\ref{thm:main_dsd}]
Let $M$ be a matroid as in the assumptions of the theorem. For contradiction assume that $|M| > f(|\F|, d)$, where $f$ is the function from Theorem~\ref{thm:main1}. By Theorem~\ref{thm:csd_dsd_dual} we have that $\csd(M^*) = d$. Since $|M^*| > f(|\F|, d)$, there exists $e \in M$ such that  $\csd(M^*\setminus e) = d$. By using Theorem~\ref{thm:csd_dsd_dual} once more we then have $\dsd((M^*\setminus e)^*) = d$. Finally, by the duality of deletion and contraction we have $(M^*\setminus e)^* = M^{**}/e = M/e$, and so we get that $\dsd(M/e) =d$, which is a contradiction with our assumption on $M$.
\end{proof}

\section{Minimal matroids for contraction-depth and deletion-depth}

The proof of Theorem~\ref{thm:main1} can be easily adapted to give a proof of Theorem~\ref{thm:main_cd}. Recall that this theorem states that if $M$ is an $\mathbb{F}$-represented matroid such that
$\cd (M) = d$ and $\cd (M \setminus S) < d$ for every non-empty subset $S$ of elements of $M$, then $|M| \leq f(|\FF|, d)$ for some function $f$.
We can follow the same proof as in Theorem~\ref{thm:main1} all the way up to Lemma~\ref{lem:main1}, adjusting what is necessary.
We only sketch the steps below, as they are analogous than the proof of Theorem~\ref{thm:main1}. We proceed as follows:
\begin{enumerate}
    \item We start with a $\F$-represented matroid $M = M(A)$ with $\cd(M) = d$, and for contradiction assume that $|M| > N$ for a suitably chosen $N$ depending on $|\F|$ and $d$.
    \item We show that for every $d$ there exists a CMSO formula $cd_d$ such that for every matrix $A$ we have that $\S(A) \models cd_d$ if and only if $cd(A) = d$. This formula can be constructed the same way as in the proof of Theorem~\ref{thm:csd_CMSO}, with the main difference being that we do not need to use virtual columns (alternatively, we can require that each virtual column corresponds to an actual column of $A$). Apart from this, we only need to adjust the basis of the definition to reflect that a matroid with one element has contraction-depth $1$.
    \item By Theorem~\ref{thm:cd_csd_fequivalent} we have  $\csd(M) \le g(\cd(M))$ for some function $g$, and by Corollary~\ref{cor:dual_td} we know that $M$ can be represented by matrix $A'$ of dual treedepth at most $g(\cd(M))$. 
    \item From this point on, we continue the same way as in Sections~\ref{sec:encoding_matrices} and~\ref{sec:proof_finish}. The result of this is a variant of Lemma~\ref{lem:main1} with $\csd$ replaced by $\cd$, which is precisely Theorem~\ref{thm:main_cd}.
\end{enumerate}

From Theorem~\ref{thm:main_cd} we easily obtain Theorem~\ref{thm:main_dd} by duality. Recall that this theorem states that if $M$ is an $\mathbb{F}$-represented matroid such that
$\dd (M) = d$ and $\dd (M / e) < d$ for every $e \in M$, then $|M| \leq f(|\FF|, d)$ for some function $f$.

\begin{proof}[Proof of Theorem~\ref{thm:main_dd}]
Let $M$ be a matroid as in the assumptions of the theorem. 
For contradiction assume that $|M| > f(|\F|, d)$, where $f$ is the function from Theorem~\ref{thm:main_cd}. By Lemma~\ref{lem:cd_dd_dual} we have that $\cd(M^*) = d$. Since $|M^*| > f(|\F|, d)$, there exists a non-empty set $S$ of elements of $M^*$  $\cd(M^*\setminus S) = d$. By using Lemma~\ref{lem:cd_dd_dual} again, we have $\dd((M^*\setminus S)^*) = d$. Finally, by the duality of deletion and contraction we have $(M^*\setminus S)^* = M^{**}/S = M/S$, and so we obtain that $\dd(M/S) = d$, which is a contradiction with our assumption on $M$.
\end{proof}

\section{Further results}

 Let $M$ be a matroid represented by a matrix $A$ with $m$ rows. Recall that we call a vector $\ve{v} \in \F^m$ \emph{progresive} if $\csd(M/\ve{v}) < \csd(M)$. Theorem~\ref{thm:few_moves} states that there exists a function $g: \N \times \N \to \N$ with the following property: For every finite field $\F$ and $d \in \N$, if $M$ is a $\F$-represented matroid with $\csd(M) = d$, then the number of progressive vectors for $M$ is at most $g(|\FF|, d)$.
% \begin{theorem}
% \label{thm:few_moves}
%     There exists a function $g: \N \times \N \to \N$ such that for every $\F$ and $d$, if $M$ is a $\F$-represented matroid with $\csd(M) = d$, then the number of progressive vectors for $M$ is at most $g(|\FF|, d)$.
% \end{theorem}
\begin{proof}[Proof of Theorem~\ref{thm:few_moves}]
    Let $A$ be the matrix representing $M$. By Theorem~\ref{thm:main1} $M$ contains a submatroid $M'$ with $\csd(M') = d$ and with at most $f(|\FF|,d)$ elements. Let $S$ be the set of columns of $A$ that correspond to the elements of $M'$. 
    %Let $v$ be an arbitrary progressive vector for $M$.

    We claim that every progressive vector $\ve{v}$ for $M$ is in $\Span(S)$. Assume for contradiction that this is not the case. Then the matroid $M'/\ve{v}$ has contraction$^*$-depth $d$, because any subset of $S$ is dependent in $M'/\ve{v}$ if and only if it is dependent in $M'$. But $\csd(M/\ve{v}) < d$ since $v$ is progressive, and by Lemma~\ref{lem:restriction} we have $\csd((M'/\ve{v}) \le   \csd(M/\ve{v})$, so $\csd((M'/\ve{v}) < d$, a contradiction.
    
    %for every restriction of , a contradiction with our assumption that $v$ was progressive.

    Consequently, since every progressive vector is a linear combination of vectors from $S$, it is  determined by $|S|$ coefficients from $\F$. Therefore, there are at most $\F^{|S|}$ such vectors. Thus, we can set $g(|\FF|, d):= |\F|^{f(|\FF|, d)}$, where $f$ is the function from Theorem~\ref{thm:main1}.
\end{proof}

% \todo{TODO: Analogous result for $\dsd$? This would need a nice correspondence between 'moves' $\ve{u} \in \F^k$ and $\ve{v} \in \F^n$ in Lemma~\ref{lem:cs_cb_equal}, but I don't see a clean way right now -- one type of moves works with respect to a particular matrix, and the other with respect to a subspace. This is strange, but hopefully will be figured out soon.}

\subsection{Results for $\Q$-represented matroids}
Recall that by Lemma~\ref{lem:rational_to_finite}  every $\Q$-represented matroid $M$ with $\csd(M) =d$ and  entry complexity $k$ can be $\F_q$-represented with $q \le f(d,k)$ for some function $f$. Then we get the following two results as an easy corollary of Theorem~\ref{thm:main1} and Theorem~\ref{thm:few_moves}. 
\begin{theorem}
\label{thm:main_rational}
Let $M$ be an $\mathbb{Q}$-represented matroid with entry complexity $k$ such that
$\csd (M) = d$ and $\csd (M \setminus e) < d$ for every $e \in M$. Then $|M| \leq f(d, k)$ for some function $f$.
\end{theorem}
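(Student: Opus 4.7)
The plan is to reduce Theorem~\ref{thm:main_rational} to its finite-field counterpart, Theorem~\ref{thm:main1}, via the field-reduction lemma already stated in the paper. Concretely, let $M$ satisfy the hypotheses of the theorem, so that $M$ is $\mathbb{Q}$-represented with entry complexity at most $k$, $\csd(M) = d$, and $\csd(M\setminus e) < d$ for every $e \in M$. Lemma~\ref{lem:rational_to_finite} supplies a prime power $q \leq h(k, d)$ and an $\mathbb{F}_q$-representation of the same matroid $M$. I would then pass to this $\mathbb{F}_q$-representation and simply invoke Theorem~\ref{thm:main1} on it, obtaining $|M| \leq f(q, d) \leq f(h(k, d), d)$; setting $f'(d, k) := f(h(k, d), d)$ finishes the argument.

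The only step requiring care is the transfer of the hypotheses across the change of field. The definition of $\csd$ is phrased in terms of a chosen representation, since one quantifies over all one-dimensional subspaces of $\mathbb{F}^h$, so a priori the numerical value could depend on the ambient field. This concern is dispelled by Proposition~\ref{thm:csd-circuits}: the bounds $\log_2 c \leq \csd(M) \leq c^2$ tie $\csd(M)$ to the size $c$ of the largest circuit of $M$, which is a purely combinatorial matroid invariant and therefore the same over $\mathbb{Q}$ and $\mathbb{F}_q$. Consequently the value of $\csd(M)$, as well as $\csd(M\setminus e)$ for every $e$, is controlled by functions of $d$ and remains bounded once we switch to $\mathbb{F}_q$; any minor numerical shift in the exact value is harmless, since the outer function $f'(d, k)$ is allowed to depend on $d$ and $k$ arbitrarily.

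In summary, the key steps in order are: \emph{(i)} apply Lemma~\ref{lem:rational_to_finite} to obtain an $\mathbb{F}_q$-representation with $q$ bounded by $h(k, d)$; \emph{(ii)} observe that the deletion-minimality hypothesis on $M$ is insensitive to the change of representing field, modulo the circuit-based bounds of Proposition~\ref{thm:csd-circuits}; \emph{(iii)} apply Theorem~\ref{thm:main1} to obtain the cardinality bound; \emph{(iv)} compose the two functions to produce the final bound $f'(d, k)$. The one place where I would slow down is step \emph{(ii)}, since it is the sole piece of nontrivial content beyond pure bookkeeping; everything else is a direct application of results already established in the paper.
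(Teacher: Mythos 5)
Your overall approach is identical to the paper's: apply Lemma~\ref{lem:rational_to_finite} to obtain an $\mathbb{F}_q$-representation with $q$ bounded in terms of $k$ and $d$, then invoke Theorem~\ref{thm:main1} and compose the bounds. The paper in fact presents this deduction in exactly this shape, as a brief ``easy corollary.''

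Where your write-up diverges is step \emph{(ii)}, and that is where it is also weakest. You are right to flag that the definition of $\csd$ quantifies over one-dimensional subspaces of the ambient space and hence is a priori representation-dependent, but the fix you propose does not close the gap. Proposition~\ref{thm:csd-circuits} only sandwiches $\csd(M)$ between $\log_2 c$ and $c^2$ where $c$ is the largest circuit size; it does \emph{not} say that the numerical value of $\csd$ is the same over $\mathbb{Q}$ and over $\mathbb{F}_q$. Your closing sentence, that ``any minor numerical shift in the exact value is harmless,'' is precisely the place where the argument breaks: Theorem~\ref{thm:main1} requires the \emph{exact} hypotheses $\csd(M) = d$ and $\csd(M\setminus e) < d$ to hold over $\mathbb{F}_q$, and a shift in the value of $\csd$ when changing fields could destroy deletion-minimality (some $e$ might no longer decrease the depth over $\mathbb{F}_q$), in which case Theorem~\ref{thm:main1} simply does not apply to $M$. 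Bounding $\csd$ by a function of $d$ does not help, since after replacing $M$ with a deletion-minimal $\mathbb{F}_q$-submatroid you would only control the size of that submatroid, not $|M|$. What is actually needed -- and what the paper implicitly relies on via the construction behind Lemma~\ref{lem:rational_to_finite} -- is that the change of field preserves the value of $\csd$ for $M$ \emph{and} for each $M \setminus e$ simultaneously, so that deletion-minimality transfers verbatim. That is a property of the specific reduction in \cite{ChaCKKP19}, not something you can recover from Proposition~\ref{thm:csd-circuits}. So the route is the paper's, but the justification you offer for the one nontrivial step is not sufficient as written.
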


\begin{theorem}
\label{thm:few_moves_rational}
    Let $M$ be a $\Q$-represented matroid with $\csd(M) = d$ with entry complexity $k$. Then the number of progressive vectors for $M$ is at most $g(d, k)$ for some function $g$.
\end{theorem}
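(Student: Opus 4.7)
The plan is to reduce Theorem~\ref{thm:few_moves_rational} to the already-established finite-field Theorem~\ref{thm:few_moves}, using the conversion supplied by Lemma~\ref{lem:rational_to_finite}. This is in direct analogy with how Theorem~\ref{thm:main_rational} is obtained from Theorem~\ref{thm:main1} in the preceding subsection, and is consistent with the remark in the introduction that the $\Q$-case is an easy consequence of the finite-field machinery.

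I would proceed in two steps. Given a $\Q$-represented matroid $M$ with $\csd(M) = d$ and entry complexity $k$, the first step is to invoke Lemma~\ref{lem:rational_to_finite} to obtain a representation of the underlying abstract matroid over some finite field $\F_q$ with $q \le h(k, d)$, where $h$ is the function supplied by that lemma. In the second step, I apply Theorem~\ref{thm:few_moves} to $M$ viewed through this $\F_q$-representation; this yields that the number of progressive vectors is at most $g(q, d) \le g(h(k, d), d)$, where $g$ is the function from Theorem~\ref{thm:few_moves}. Setting the function in the current statement to $(d, k) \mapsto g(h(k, d), d)$ then closes the argument.

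The subtlety worth flagging is purely interpretational. Over $\Q$ the set of progressive vectors in any fixed $\Q$-representation would be infinite, since every nonzero scalar multiple of a progressive vector is again progressive (the operation $M/\ve{v}$ depends only on $\Span(\ve{v})$). The statement is therefore naturally read as counting progressive vectors in the finite-field representation produced by Lemma~\ref{lem:rational_to_finite}, which is a matroid-preserving transformation of $M$. With that convention fixed, no further technical obstacle arises, and the proof reduces to a direct invocation of Lemma~\ref{lem:rational_to_finite} followed by Theorem~\ref{thm:few_moves}.
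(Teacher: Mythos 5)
Your proposal matches the paper's own argument exactly: invoke Lemma~\ref{lem:rational_to_finite} to pass from the $\Q$-representation to an $\F_q$-representation with $q$ bounded in terms of $d$ and $k$, then apply Theorem~\ref{thm:few_moves} to the $\F_q$-represented matroid, and compose the two bounding functions. The paper treats this as an immediate corollary and does not spell out the steps, so your slightly more explicit write-up is in fact a small improvement, and your remark on the interpretational subtlety (that over $\Q$ the set of progressive vectors in the literal sense is infinite whenever nonempty, so the count must be understood relative to the $\F_q$-representation produced by the lemma) is a genuinely useful clarification that the paper leaves implicit.
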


%\todo{The analogous results for $\dd$ and $\dsd$ are not entirely clear. We would need (for example) the following to make the duality work -- let $A$ be a matrix with entry complexity $k$. Let $W = span(rows(A))$. Then we can find a basis of $W^{\bot}$ with small entry complexity. Alternatively, one can try to prove something like Lemma~\ref{lem:rational_to_finite} for deletion depth (this would be enough); I couldn't find anything like that in~\cite{BriKL23}.}

% \begin{proof}
%     By Lemma~\ref{lem:rational_to_finite}, $M$ is $\F_q$-representable for some $q$ with $q \le h(k,d)$ for some function $h$. Then $|M| \le f(q,d)$, where $f$ is the function from Theorem~\ref{thm:main1}.
% \end{proof}

% \todo{Lift Theorem\ref{thm:few_moves} theorem to $Q$.}

% \begin{align*}
%     \begin{bNiceMatrix}[first-col,first-row]
%         & a & b & c & d & e & f & g & h & i & j \\
%         0 & 1 & 1 & 0 & 0 & 0 & 1 & 0 & 0 & 0 & 0 \\
%         1 & 1 & 1 & 1 & 1 & 1 & 0 & 0 & 0 & 0 & 0 \\
%         2 & 0 & 1 & 1 & 0 & 0 & 0 & 0 & 0 & 0 & 0 \\
%         3 & 0 & 1 & 0 & 0 & 1 & 1 & 0 & 0 & 0 & 0 \\
%         4 & 0 & 0 & 1 & 0 & 0 & 0 & 0 & 0 & 0 & 0 \\
%         5 & 1 & 0 & 0 & 1 & 0 & 0 & 0 & 0 & 0 & 0 \\
%         6 & 0 & 0 & 0 & 0 & 0 & 0 & 1 & 0 & 1 & 1 \\
%         7 & 0 & 0 & 0 & 0 & 0 & 0 & 1 & 1 & 0 & 0 \\
%         8 & 0 & 0 & 0 & 0 & 0 & 0 & 1 & 0 & 0 & 1 \\
%         9 & 0 & 0 & 0 & 0 & 0 & 0 & 0 & 1 & 1 & 0 \\
%     \end{bNiceMatrix}
% \end{align*}

\bibliography{biblio}
\bibliographystyle{abbrv}

\end{document}